\theoremstyle{plain} 
\newtheorem{thm}{Theorem}[section] 
\newtheorem{cor}[thm]{Corollary} 
\newtheorem{lem}[thm]{Lemma} 
\newtheorem{prop}[thm]{Proposition}
\theoremstyle{definition} 
\newtheorem{defn}{Definition}[section] 
\newtheorem{exem}{Example} [section]
\theoremstyle{remark} 
\newtheorem{rem}{Remark}
\begin{document}

\title[Pullback functors for reduced and unreduced $L^{q,p}$-cohomology]{Pullback functors for reduced and unreduced $L^{q,p}$-cohomology}


\author*[1,2]{\pfx{Dr} \fnm{Stefano} \sur{Spessato} \sfx{ORCID: 0000-0003-0069-5853}\email{stefano.spessato@unicusano.it }}

\affil*[1]{\orgname{Università degli Studi "Niccolò Cusano"}, \orgaddress{\street{via don Carlo Gnocchi 3}, \city{Rome}, \postcode{00166}, \country{Italy}}}


\abstract{In this paper we study the reduced and unreduced $L^{q,p}$-cohomology groups of oriented manifolds of bounded geometry and their behaviour under uniform maps. A \textit{uniform map} is a uniformly continuous map such that the diameter of the preimage of a subset is bounded in terms of the diameter of the subset itself. In general, for each $p,q \in [1, +\infty)$, the pullback map along a uniform map does not induce a morphism between the between the spaces of $p$-integrable forms or even in $L^{q,p}$-cohomology. Then our goal is to introduce, for each $p$ in $[1, +\infty)$ and for each uniform map $f$ between manifolds of bounded geometry, an $\mathcal{L}^p$-bounded operator $T_f$, such that it does induce in a functorial way the appropriate morphism in reduced and unreduced $L^{q,p}$-cohomology.}

\keywords{$L^{q,p}$-cohomology, bounded geometry, pullback, Fiber Volume}



\maketitle
\subsection*{Acknowledgments}
I am grateful to Paolo Piazza, my advisor, and to Vito Felice Zenobi for the several discussions that we had and for the competences they shared with me. I would also like to thank Francesco Bei and Thomas Schick for their advice. Finally my thanks go to the reviewers for their patience in correcting my manuscript.
\\Data sharing not applicable to this article as no datasets were generated or analysed during the current study.
\section*{Introduction}
In this paper our goal is to define, for each $p \in [1, +\infty)$  and for each uniform map $f$ between manifolds of bounded geometry, a \textit{pullback operator} $T_f$ between the spaces of $p$-integrable forms. It is a well-known fact, indeed, that the classical pullback of an $\mathcal{L}^p$-form is not, in general, an $\mathcal{L}^p$-form. This means that the pullback is not inducing a well-defined morphism between the reduced and unreduced $L^{q,p}$-cohomology. On the other hand we prove that $T_f$ does induce some morphisms in (un)-reduced $L^{q,p}$-cohomology and we also prove that the functorial properties hold.
\\More formally, fix the category $\mathcal{C}$ which has oriented manifolds of bounded geometry as objects and uniform maps as arrows. Let  $\textbf{Vec}$ be the category which has complex vector spaces as objects and linear maps as arrows. Then we prove that, for every $z$ in $\mathbb{N}$,
	\begin{equation}
	\begin{cases}
	\mathcal{F}_z(M,g) = H^z_{q,p}(M) \\
	\mathcal{F}_z((M,g) \xrightarrow{f} (N,h)) = H^z_{q,p}(N) \xrightarrow{T_f} H^z_{q,p}(M)
	\end{cases}
	\end{equation}
	is a controvariant functor. The same happens if we define $\overline{\mathcal{F}}_z(M,g) := \overline{H}^z_{q,p}(M)$.
	Moreover, we will show that if two maps $f_1$ and $f_2$ are uniformly homotopic then $T_{f_1} = T_{f_2}$ in (un)-reduced $L^{q,p}$-cohomology and in $L^{q,p}$-quotient cohomology. Finally we will show that if the naive pullback $f^*$ does induce a morphism in reduced and unreduced $L^{q,p}$-cohomology, then $f^* = T_f$.
\\As a consequence of the existence of these functors, we obtain that the reduced and unreduced $L^{q,p}$-cohomology of a manifold of bounded geometry is invariant under uniform homotopy equivalence. This result is stated in Corollary \ref{finale}. Finally, as a consequence of the existence of the functors $\overline{\mathcal{F}}_z$, the invariance of the $L^2$-index of the signature operator defined by Bei at page 20 of \cite{Bei3} under uniform homotopy equivalence for manifolds of bounded geometry is proved.
\\The idea of the operator $T_f$ comes from the work of Hilsum and Skandalis \cite{hils}: in their paper, the authors define an $\mathcal{L}^2$-bounded operator $T_f$ for compact manifolds. Our operator $T_f$ is a \textit{bounded geometry version} of their one.
\\
\\The structure of the paper is the following: in the first section we introduce the objects and the arrows of the category $\mathcal{C}$. In particular, we show that every uniform map can be approximated by a smooth map with uniformly bounded derivatives. In the second one we introduce the reduced and unreduced $L^{q,p}$-cohomology. Moreover we also introduce  the \textit{Fiber Volume} of a Lipschitz submersion $\pi:(M,g) \longrightarrow (N,h)$. This is a real function defined on $(N,h)$ such that its boundedness implies the $\mathcal{L}^p$-boundedness of $\pi^*$.
In the third Section, given a smooth uniform map $f:(M,g) \longrightarrow (N,h)$, we introduce a Lipschitz submersion $p_f: f^*TN \longrightarrow N$ such that $p_f(0_x) = f(x)$. In the last section, we introduce a specific Thom form $\omega$ for the bundle $f^*TN$, we define $T_f$ and, in conclusion, we prove the functorial properties. Finally we prove, as a consequence of the main result, the uniform homotopy invariance of the $L^2$-index of the signature operator defined by Bei in \cite{Bei3}.
\section{Maps between manifolds of bounded geometry}
\subsection{Uniform maps and uniform homotopy}
Let us consider two metric spaces $(X,d_X)$ and $(Y, d_Y)$.
\begin{defn}
A map $f:(X,d_X) \longrightarrow (Y,d_Y)$ is \textbf{uniformly continuous} if for each $\epsilon > 0$ there is a $\delta(\epsilon) >0$ such that for each $x_1$, $x_2$ in $X$
\begin{equation}
d_X(x_1, x_2) \leq \delta(\epsilon) \implies d_Y(f(x_1), f(x_2)) \leq \epsilon.
\end{equation}
Moreover $f$ is \textbf{uniformly (metrically) proper} if for each $R\geq 0$ there is a number $S(R) >0$ such that for each subset $A$ of $(Y, d_Y)$
	\begin{equation}
	diam(A) \leq R \implies diam(f^{-1}(A)) \leq S(R).
	\end{equation}
A map $f:(X, d_X) \longrightarrow (Y,d_Y)$ is a \textbf{uniform map} if it is uniformly continuous and uniformly proper.
\end{defn}
\begin{rem}
Compositions of uniform maps are also uniform.
\end{rem}
Let $(X, d_X)$ and $(Y, d_Y)$ be two metric spaces and fix two actions of a group $\Gamma$ on $X$ and $Y$. Assume that $\Gamma$ acts by isometries on $X$ and on $Y$.
\begin{defn}
	Two maps $f_0$ and $f_1: (X, d_X) \longrightarrow (Y,d_Y)$ are \textbf{$\Gamma$-uniformly-homotopic} if they are $\Gamma$-equivariant maps and they are homotopic with a uniformly continuous homotopy $H: (X\times[0,1], d_X \times d_{[0,1]}) \longrightarrow (Y, d_Y)$ which is $\Gamma$-equivariant\footnote{The action of $\Gamma$ on $X \times [0,1]$ is defined as $\gamma(x,t) := (\gamma x, t)$ for each $\gamma$ in $\Gamma$}.
	\\We will denote it by
	\begin{equation}
	f_1 \sim_{\Gamma} f_2.
	\end{equation}
	Moreover $f_1$ and $f_2$ are \textbf{$\Gamma$-Lipschitz-homotopic} if $f_1 \sim_\Gamma f_2$ and $H$ is a Lipschitz map.
\end{defn}
\begin{rem}\label{uniformlyp}
	Consider two uniformly continuous maps $F,f: (X, d_X) \longrightarrow (Y,d_Y)$ such that $f \sim_\Gamma F$. Then, if $f$ is a uniformly proper map, also the homotopy $h$ is uniformly proper. As a consequence of this, in particular, also $F$ is uniformly proper.
	\\In order to prove it fix $\epsilon >0$. Since $h$ is uniformly continuous, there is a $\delta(\epsilon)$ such that
	\begin{equation}
	 d(t, s) \leq \delta(\epsilon) \implies d(h(x,t), h(x,s)) \leq \epsilon.
	\end{equation}
	Fix $t$ in $[0,1]$. If we divide $[0,t]$ in $N_t$ intervals of length less or equal to $\delta(\epsilon)$, we obtain that
	\begin{equation}
	\begin{split}
	d(f(x), h(x,t)) &\leq d(h(x,0),h(x,t)) \\ 
	&\leq d(h(x,0),h(x,t_1)) + ... + d(h(x, t_{N_s-1}),h(x,t))\\
	&\leq N_t \epsilon =: R(t).
	\end{split}
	\end{equation}
	Observe that $R(t_1) \leq R(t_2)$ if $t_1 \leq t_2$ and $R(1) = \frac{\epsilon}{\delta(\epsilon)}$.
	\\If $A$ is a subset of $Y$ and $h_t: X \longrightarrow Y$ is defined as $h_t(p) :=h(p,t)$, then
	\begin{equation}
	\begin{split}
	h_t^{-1}(A) &= \{p \in X\vert h(p,t) \in A\} \\
	&\subseteq \{p \in X\vert f(p) \in  B_{R(t)}(A)\} = f^{-1}(B_{R(t)}(A)) \label{eqno}
	\end{split}
	\end{equation}
	where $B_{R(t)}(A)$ are the points $y$ of $Y$ such that $d(y, A) \leq R(t)$. Then 
	\begin{equation}
	h^{-1}(A) \subseteq f^{-1}(B_{R(1)}(A)) \times [0,1].
	\end{equation}
	and consequently, since $f$ is uniformly proper, and since $diam(B_{R(1)}(A))\leq diam(A) + 2 R(1)$,
	\begin{equation}
	\begin{split}
	diam(h^{-1}(A)) &\leq diam(f^{-1}(B_{R(1)}(A))) + diam([0,1]) \\
	&\leq S(diam(A) + 2 R(1)) + 1.
	\end{split}
	\end{equation}
\end{rem}
\begin{defn}
	A map $f:(X, d_X) \longrightarrow (Y,d_Y)$ is a \textbf{$\Gamma$-uniform homotopy equivalence} if $f$ is $\Gamma$-equivariant, uniformly continuous and there is a $\Gamma$-equivariant map $g:(y,d_y) \longrightarrow (X, d_X)$ such that
	\begin{itemize}
		\item $g$ is a homotopy inverse of $f$,
		\item $g$ is uniformly continuous,
		\item $f\circ g \sim_\Gamma id_N$ and $g \circ f \sim_\Gamma id_M$.
	\end{itemize}
\end{defn}
\subsection{Manifolds of bounded geometry}
In this section we introduce the notion of manifolds of bounded geometry. All the definitions and propositions below can be found in Chapter 2 of the thesis of Eldering \cite{bound}.
\begin{defn}
	A Riemannian manifold $(M,g)$ has \textbf{$k$-bounded geometry} if:
	\begin{itemize}
		\item the sectional curvature $K$ of $(M,g)$ and its first $k$-covariant derivatives are bounded, i.e. $\forall i = 0,...,k$ there is a constant $V_i$ such that $\vert\nabla^i K(x) \vert \leq V_i$ for all $x$ in $M$.
		\item there is a number $C > 0$ such that for all $p$ in $M$ the injectivity radius $inj_M(p)$ in $p$ satisfies $inj_M(p) \geq C.$
		The maximal number which satisfies this inequality will be denoted by $inj_M$.
	\end{itemize}
	When we talk about a manifold $M$ with bounded geometry, without specifying $k$, we mean that $M$ has $k$-bounded geometry for all $k$ in $\mathbb{N}$.
\end{defn}
\begin{defn}
	Let $(M,g)$ be a manifold of bounded geometry. We define $\delta > 0$ to be \textbf{$M$-small} if, for each $k >0$ in $\mathbb{N}$,
	\begin{itemize}
	    \item there is a constant $C_k > 0$ such that for all $x_1,x_2 \in M$ with $d(x_1,x_2) < \delta$  the coordinate transition map $\phi_{2,1} = exp^{-1}_{x_2} \circ exp_{x_1}: U \longrightarrow T_{x_2}M$ where $ U = exp_{x_1}^{-1}(B_{\delta}(x_1)\cap B_\delta(x_2)) \subset T_{x_1}M$ is $C^{k-1}$-bounded\footnote{This means that all the derivatives of degree less or equal to $k-1$ are bounded.} with $\vert\phi_{2,1}\vert_{k-1} \leq C_k$.
	\item the metric up to its $k$-th order derivatives and the Christoffel symbols up to its $(k-1)$-th order derivatives are bounded in normal coordinates of radius $\delta$ around each $x \in M$, with bounds that are uniform in $x$.
	\end{itemize}
\end{defn}
As reported in page 45 of \cite{bound}, every manifold $(M,g)$ of bounded geometry admits an $M$-small $\delta$.
\begin{rem}\label{bvolume}
		Consider a Riemannian manifold $(M,g)$ such that
		\begin{equation}\label{riccibound}
		Ric(M) \geq (n-1)C
		\end{equation}
		where $C$ is a constant. In particular, if $M$ has $k_{\geq 0}$-bounded geometry, then (\ref{riccibound}) is satisfied using $C = - V_0$ where $V_0$ bounds the norm of sectional curvature.  Let us denote the measure $\mu_M$ induced by $g$ on $M$. Because of the Bishop-Gromov inequality, if $(M,g)$ satisfies (\ref{riccibound}), then for each $p \in M$ and for each $r \geq 0$ we obtain $\mu_M(B_r(p)) \leq Q(r)$ for some function $Q: \mathbb{R}_{\geq 0} \longrightarrow  \mathbb{R}_{\geq 0}$. So, if $A \subset M$ has $diam(A) = r$, 
		\begin{equation}
		\mu_M(A) \leq \mu_M(B_{diam(A)}(p)) \leq C(diam(A)),
		\end{equation}
		where $p$ is a point of $A$.
	\end{rem}
	\subsection{Uniformly proper and discontinuous actions}
\begin{defn}
	Consider $\Gamma$ a group which acts by isometries on a metric space $(X, d_X)$. The action of $\Gamma$ is \textbf{free and uniformly properly discontinuous (FUPD)} if 
	\begin{itemize}
		\item  the action of $\Gamma$ is free,
		\item  there is a number $\delta > 0$ such that $d_X(x, \gamma x) \leq \delta \implies x = \gamma x.$
	\end{itemize}
\end{defn}
\begin{rem}
If the action of $\Gamma$ is FUPD, then it is properly discontinuous and free. 
\end{rem}
\begin{prop}\label{Gammaq}
	Let $(M,g)$ be a manifold of bounded geometry. Consider $\Gamma$ a group acting on $M$ by isometries. Suppose the action of $\Gamma$ is free and properly discontinuous. Then the following statements are equivalent:
	\begin{enumerate}
		\item a group $\Gamma$ acting FUPD on $M$ and $N$,
		\item the quotient $M/\Gamma$ has bounded geometry\footnote{We are considering on $M/\Gamma$ the Riemannian metric induced by $g$}.
	\end{enumerate}
	\end{prop}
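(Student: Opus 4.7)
The approach rests on the standard fact that, when $\Gamma$ acts freely, properly discontinuously, and by isometries, the projection $\pi : M \to M/\Gamma$ is a local isometry; hence sectional curvature, its covariant derivatives, and the Christoffel symbols in normal coordinates transfer pointwise between $M$ and $M/\Gamma$. The real content of the proposition is the interplay between the FUPD separation constant and the injectivity radius, and this is where I would focus the argument.

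For $(1) \Rightarrow (2)$, I would fix $x \in M$ and the FUPD constant $\delta > 0$, and first show that $\pi$ is an isometric diffeomorphism on $B_{r}(x)$, where $r := \min(\mathrm{inj}_M, \delta/2)$. Indeed, if $y, z \in B_{r}(x)$ satisfy $\pi(y) = \pi(z)$, writing $z = \gamma y$ one has $d(y, \gamma y) \leq d(y,x) + d(x,z) \leq 2r \leq \delta$, forcing $\gamma y = y$ by FUPD and then $\gamma = e$ by freeness. Hence $\pi$ maps $B_r(x)$ bijectively (and isometrically) onto $B_r(\pi(x))$, so the injectivity radius of $M/\Gamma$ is bounded below by $r$ uniformly in $x$, and the bounds on sectional curvature, its covariant derivatives, and the Christoffel symbols descend directly from $M$ since they are computed in the normal coordinates pulled back by $\pi$.

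For $(2) \Rightarrow (1)$, suppose $M/\Gamma$ has bounded geometry with $\mathrm{inj}_{M/\Gamma} \geq C' > 0$. The exponential map $\exp_{\pi(x)}$ is a diffeomorphism on the Euclidean ball of radius $C'$, and composing with $\exp_x$ on $M$ (via the isometry $d\pi_x : T_x M \to T_{\pi(x)}(M/\Gamma)$) exhibits $\pi|_{B_{C'}(x)}$ as injective. Now if $\gamma \in \Gamma$ satisfies $d(x, \gamma x) < C'$, then both $x$ and $\gamma x$ lie in $B_{C'}(x)$ and project to the same point, so $\gamma x = x$, and by freeness $\gamma = e$; this is exactly the FUPD condition with $\delta = C'$.

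The main obstacle is the careful handling of the injectivity radius in the first direction: one needs that $\pi$ restricted to a ball of uniform radius is not merely injective but an isometric diffeomorphism onto a normal neighborhood in $M/\Gamma$, which requires combining the original bound on $\mathrm{inj}_M$ with the FUPD constant $\delta$. Once this geometric equivalence of small balls is established, all the remaining bounded geometry bounds are inherited transparently in both directions.
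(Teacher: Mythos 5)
Your approach is essentially the same as the paper's: in both directions the key observation is that the covering projection $\pi : M \to M/\Gamma$ is a local isometry commuting with exponential maps, and the work lies in tying the FUPD constant to the injectivity radius. The paper's $(2)\Rightarrow(1)$ is phrased as a proof by contradiction while yours is direct, but the geometric content is identical.

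There is, however, a small gap in your $(2) \Rightarrow (1)$ step. You set the FUPD constant to $\delta = C' := \operatorname{inj}_{M/\Gamma}$ and claim that composing $\exp_{\pi(x)}$ with $\exp_x$ via $d\pi_x$ "exhibits $\pi|_{B_{C'}(x)}$ as injective." From $\pi \circ \exp_x = \exp_{\pi(x)} \circ d\pi_x$ and the injectivity of $\exp_{\pi(x)}$ on the $C'$-ball, you only get injectivity of $\pi \circ \exp_x$ on the ball of radius $C'$ in $T_xM$; to convert this into injectivity of $\pi$ on the metric ball $B_{C'}(x) \subset M$ you need $\exp_x$ itself to be a diffeomorphism there, i.e. $C' \leq \operatorname{inj}_M(x)$, which you have not assumed. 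The paper sidesteps this by working with any $\delta < \min\{\operatorname{inj}_M, \operatorname{inj}_{M/\Gamma}\}$, and the correct conclusion of your argument is that the FUPD condition holds with separation constant $\min(C', \operatorname{inj}_M)$ rather than $C'$. (Alternatively, you could avoid invoking $\operatorname{inj}_M$ at all by lifting the argument: a minimizing geodesic in $M$ from $x$ to $\gamma x$ of length $< C'$ projects to a geodesic loop at $\pi(x)$ of length $< \operatorname{inj}_{M/\Gamma}$, which must be constant, forcing $x = \gamma x$.) This is a minor repair; once made, your proof is correct and matches the paper's.
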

	\begin{proof}
	$1 \implies 2$. Let $\delta$ be the constant of the FUPD action of $\Gamma$. Then $B_\delta(p)$ is a trivializing open of $M/ \Gamma$  for each $p$ in $M$. So $inj_{M/\Gamma} \geq \min\{inj_M, \delta\}$ and the curvature of $M/\Gamma$ has the same bounds of the curvature of $M$.
	\\$2 \implies 1$. Let us suppose that for each $\delta > 0$ there is a point $p$ and a $\gamma \in \Gamma$ such that $d_M(p, \gamma p) \leq \delta$. Suppose $\delta < \min\{inj_M, inj_{M/\Gamma}\}$. Then there is a vector $v$ in $T_pM$ whose norm equals $\delta$ and $exp_p(v) = \gamma p$. Consider the Riemannian covering $s: M \longrightarrow M/\Gamma$. Observe that
	\begin{equation}
	s \circ exp_p = exp_{s(p)} \circ ds
	\end{equation}
	Then
	\begin{equation}
	s(\gamma p) = s \circ exp_p (v) = exp_{s(p)} \circ ds(v).
	\end{equation}
	Since $ds$ is an isometry, the norm of $ds(v)$ is less or equal to $inj_{M/\Gamma}$. This means that $ds(v) = 0$ and, in particular, $v = 0$. Then
	\begin{equation}
	\gamma p = exp_p(v) = exp_p(0) = p.
	\end{equation}
	Because the action of $\Gamma$ is free, $\gamma = id_\Gamma$
\end{proof}
\subsection{Lipschitz approximation of a uniformly continuous map}\label{approx}
Let $(M,g)$ and $(N,h)$ be two manifolds of bounded geometry and $f \in C^k(M,N)$.
\begin{defn}
	 A map $f$ is of class $C^k_b$ if there exist $M$-small $\delta_M$ and an $N$-small $\delta_N > 0$ such that for each $x \in X$ we have $f(B_{\delta_M}(x)) \subset B_{\delta_N}(f (x))$ and the composition
	\begin{equation}
		F_p = exp^{-1}_{f(p)} \circ f \circ exp_p : B_{\delta_M}(0) \subset T_pM \longrightarrow T_{f(p)}N
	\end{equation}
	in normal coordinates is of class $C^k_b$ and its $C^l$-norms ($l= 0,...,k$) as function from $T_pM$ to $T_{f(p)}N$ are uniformly bounded in $x \in X$.
\end{defn}
\begin{rem}
	There are some remarks we wish to make:
	\begin{itemize}
		\item The assumption of bounded geometry is necessary in order to define $C^k_b$-maps. More details can be found in pages 44-45 of \cite{bound}.
		\item Composition of two $C^k_b$-maps is a $C^k_b$-map.
		\item Uniformly continuous maps are $C^0_{b}$-maps.
		\item A $C^1_b$-map is a Lipschitz map.
	\end{itemize} 
\end{rem}
Consider a uniform map $f: (M,g) \longrightarrow (N,h)$ between manifolds of bounded geometry. Let us suppose, moreover, that there is a group $\Gamma$ on $M$ and $N$ acting FUPD and assume that $f$ is $\Gamma$-equivariant. We want to show that there is a $\Gamma$-equivariant uniform map $F: (M,g) \longrightarrow (N,h)$ which is $C^k_{b}$ for all $k$ and that $F \sim_\Gamma f$.
\\In order to prove this fact we need the Lemma 2.34 of the thesis of Eldering \cite{bound}.
\begin{lem}\label{primaappr}
	Let $r, h > 0$, $g \in C^{k}(B_{r + 2 h}(0) \subset \mathbb{R}^m, \mathbb{R}^n)$. Then for each $\epsilon > 0$ there is a $\nu_0$ such that $g$ can be approximated by a function $G_\nu$ for each $0 < \nu \leq \nu_0$ such that
	\begin{itemize}
		\item $G_\nu = g$ outside $B_{r + h}(0)$;
		\item $G_\nu$ is a smooth function in $B_{r}(0)$;
		\item $\vert g - G_\nu\vert_{s} \leq \epsilon$ for each $s \leq k$  ;
		\item $\vert G_\nu\vert_{l} \leq C(\nu, l)\vert g\vert_0$ on $B_r(0)$  for some $C_{\nu,l} >0$.
	\end{itemize}
	Note that $C(\nu,l)$ may grow unboundedly as $\nu \longrightarrow 0$ or $l \longrightarrow +\infty$.
\end{lem}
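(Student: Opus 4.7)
The plan is to combine a standard mollification with a smooth cutoff, so that the approximation is smooth on the inner ball, equals $g$ outside the larger ball, and the bookkeeping of derivatives is entirely controlled by the cutoff. First I would fix a standard mollifier $\phi \in C^\infty_c(\mathbb{R}^m)$, nonnegative with $\int \phi = 1$ and $\mathrm{supp}\,\phi \subset B_1(0)$, and set $\phi_\nu(x) := \nu^{-m}\phi(x/\nu)$. Next I would fix a cutoff $\chi \in C^\infty(\mathbb{R}^m)$ with $\chi \equiv 1$ on $B_r(0)$, $\chi \equiv 0$ outside $B_{r+h}(0)$, and derivatives of every order bounded by constants depending only on $h$.

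For $0 < \nu \leq h$ I would then define
\begin{equation*}
G_\nu(x) := \chi(x)\,(\phi_\nu * g)(x) + \bigl(1 - \chi(x)\bigr)\, g(x),
\end{equation*}
which is well-defined because, when $\nu \leq h$, the convolution $\phi_\nu * g$ requires values of $g$ only on $B_{r+h+\nu}(0) \subset B_{r+2h}(0)$, where $g$ is given.

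The four bullets are then checked as follows. Outside $B_{r+h}(0)$ one has $\chi = 0$, so $G_\nu = g$. On $B_r(0)$ one has $\chi = 1$, so $G_\nu = \phi_\nu * g$, which is $C^\infty$ by standard properties of mollification. For the uniform approximation, writing $G_\nu - g = \chi \cdot (\phi_\nu * g - g)$ and applying the Leibniz rule, one reduces everything to the classical fact that, for $g \in C^k$ and $s \leq k$, the quantity $|\phi_\nu * g - g|_s$ tends to zero as $\nu \to 0$, uniformly on the compact set $\overline{B_{r+h}(0)}$; one then picks $\nu_0$ small enough to beat the $C^s$-bounds of $\chi$ (which are fixed by $h$). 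For the last estimate, on $B_r(0)$ the identity $\partial^\alpha G_\nu = (\partial^\alpha \phi_\nu) * g$ combined with Young's inequality yields $|\partial^\alpha G_\nu|_0 \leq \|\partial^\alpha \phi_\nu\|_{L^1}\,|g|_0 \leq C_{|\alpha|}\,\nu^{-|\alpha|}\,|g|_0$, which is exactly the required bound with $C(\nu,l) = C_l\,\nu^{-l}$.

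There is no serious obstacle here; the content is the standard mollification-plus-cutoff recipe. The only point requiring care is that the cutoff must be introduced before the convolution is globalized, so that $G_\nu$ agrees with $g$ on the annulus where the convolution would otherwise corrupt the original function; consequently the Leibniz expansion of derivatives of $\chi \cdot (\phi_\nu * g - g)$ must be controlled term by term, using the uniform $C^s$-bounds of $\chi$ (independent of $\nu$) together with the $C^s$-convergence of $\phi_\nu * g$ to $g$.
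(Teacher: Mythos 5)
Your construction is the same as the one given in the paper's Remark~\ref{quindi}: writing $G_\nu = \chi\,(\phi_\nu * g) + (1-\chi)\,g$ is identical to the formula $G_\nu(x) = (1-\chi(\|x\|))g(x) + \chi(\|x\|)\int_{B_1^m}g(x-\nu y)\phi(y)\,dy$ since $\int_{B_1^m}g(x-\nu y)\phi(y)\,dy=(\phi_\nu*g)(x)$, with the only cosmetic difference that the paper's cutoff is radial. The verification of the four bullets via the Leibniz rule, classical $C^s$-convergence of mollifiers, and Young's inequality is exactly the standard argument intended (the paper itself defers to Lemma~2.34 of \cite{bound} and only sketches the construction), so the proposal is correct and takes the same route.
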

\begin{rem}\label{quindi}
	The approximation above is defined in the following way.  Consider a mollifier, i.e. a compactly supported, smooth, positive function $\phi: \mathbb{R}^n \longrightarrow \mathbb{R}$ such that its integral over $\mathbb{R}^n$ equals to 1. Suppose that the support of  $\phi$ is contained in the Euclidean ball of radius $1$ in $\mathbb{R}^m$. Let us consider a non-increasing smooth map $\chi: [0, r + 2 h] \longrightarrow [0,1]$ which is $1$ if $x \leq r$ and $0$ if $x\geq r+ h$. Then we define
	\begin{equation}
			G_\nu(x) = (1 - \chi(\vert\vert x\vert\vert))g(x) + \chi(\vert\vert x\vert\vert)\int_{B^m_1(0)}g(x - \nu y)\phi(y) dy.
	\end{equation}
	Observe that if $\nu = 0$ then $G_\nu = g$.
\end{rem}
\begin{rem}\label{epsnu}
	Fix $\epsilon > 0$. By the proof of Lemma 2.34 of \cite{bound}, if $\nu \leq \delta(\epsilon)$ where $\delta(\epsilon)$ satisfies
	\begin{equation}\label{qui}
	    d(p,q) \leq \delta(\epsilon) \implies d(g(p), g(q)) \leq \epsilon
	\end{equation}
	then
	\begin{equation}
		d(g(p), G_\nu(p)) \leq \epsilon.
	\end{equation}
	Let us define $\delta_\chi: \mathbb{R}_{>0} \longrightarrow \mathbb{R}_{>0}$ a function such that for each $\epsilon >0$
	\begin{equation}
	\vert x_1 - x_0 \vert \leq \delta_{\chi}(\epsilon) \implies \vert \chi(\vert\vert x_1\vert\vert) - \chi(\vert\vert x_0\vert\vert) \vert \leq \epsilon.
	\end{equation}
	Observe that, without loss of generality, we can always suppose that $\delta: \mathbb{R}_{>0} \longrightarrow \mathbb{R}_{>0}$ is non-decreasing and, at the same time,
	\begin{equation}
	    \delta(\epsilon) \leq \delta_{\chi}(\frac{\epsilon}{\vert g \vert_0})
	\end{equation}
	where $\vert g \vert_0 := \max\limits_{p \in B_1^m(0)}\{\vert g(p) \vert \}$. Under this assumption on $\delta$, if we define $\tilde{\delta}(\epsilon) := \delta(\frac{\epsilon}{4})$, we obtain that
	\begin{equation}
	  d(p,q) \leq \tilde{\delta}(\epsilon) \implies d(G_\nu(p), G_\nu(q)) \leq \epsilon.
	\end{equation}
\end{rem}
We are ready to prove the existence of a $C^k_{b}$-approximation of a uniformly continuous map.
\begin{prop}\label{appr}
	Consider two Riemannian manifolds $(M,g)$ and $(N,h)$ of bounded geometry and let $f:(M,g) \longrightarrow (N,h)$ be a uniformly continuous map. Fix $\epsilon >0$ small enough. Then there is map $F:(M,g) \longrightarrow (N,h)$ such that
	\begin{itemize}
		\item $d(F(p), f(p)) \leq \epsilon$ for all $p$ in $M$,	
		\item for all $l \geq 1$  the approximation $F$ is a smooth $C^l_{b}$-map,
		\item Consider $\Gamma$ a group that acts FUPD by isometries on $M$ and $N$. Assume that $f$ is $\Gamma$-equivariant. Then $F$ can be chosen to be $\Gamma$-equivariant,
		\item $f \sim_\Gamma F$.
	\end{itemize}
This implies that if $f$ is a uniform map, then, for each $\epsilon > 0$, there is a smooth, uniformly proper and Lipschitz approximation $f_\epsilon$ of $f$.
\end{prop}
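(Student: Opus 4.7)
The plan is to produce $F$ by patching local mollifications, using bounded geometry to get uniform control, and then to build the homotopy via geodesic straight lines in $N$. First I would fix an $M$-small $\delta_M$ and an $N$-small $\delta_N$, and then shrink $\delta_M$ (using that $f$ is uniformly continuous) so that $f(B_{\delta_M}(x)) \subset B_{\delta_N/3}(f(x))$ for every $x \in M$. Bounded geometry yields a maximal $\delta_M$-separated net $\{x_i\}_{i \in I} \subset M$ whose balls $B_{2\delta_M}(x_i)$ cover $M$ with uniformly bounded multiplicity (Remark~\ref{bvolume} bounds the number of $x_j$ in a ball of fixed radius), together with a smooth partition of unity $\{\chi_i\}$ subordinate to this cover whose derivatives of every order are uniformly bounded.

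Next, in each chart I would read $f$ in normal coordinates: write $g_i := \exp^{-1}_{f(x_i)} \circ f \circ \exp_{x_i}: B_{2\delta_M}(0) \subset T_{x_i}M \longrightarrow T_{f(x_i)}N$ and apply Lemma~\ref{primaappr} with the parameter $\nu$ chosen via Remark~\ref{epsnu} so that $\lvert g_i - G_{\nu,i}\rvert_0 \leq \epsilon/C$, where $C$ absorbs the partition-of-unity constants and the uniform bounds on the exponential maps. Setting $f_i := \exp_{f(x_i)} \circ G_{\nu,i} \circ \exp^{-1}_{x_i}$ gives a smooth local $C^k_b$-approximation with $d_N(f(p), f_i(p)) \leq \epsilon/C$ on $B_{2\delta_M}(x_i)$. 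To glue, I would use the fact that for every $p$ in the overlap region the points $\{f_j(p) : \chi_j(p) \neq 0\}$ all lie in a single normal chart of $N$ (because the diameter of the overlap is bounded by $\delta_N$ by Remark~\ref{epsnu} and our choice of $\delta_M$); fixing any index $i(p)$ with $\chi_{i(p)}(p) \neq 0$, I define
\begin{equation}
F(p) := \exp_{f(x_{i(p)})}\!\Bigl( \sum_j \chi_j(p)\, \exp^{-1}_{f(x_{i(p)})}\!\bigl( f_j(p)\bigr) \Bigr).
\end{equation}
The choice of $i(p)$ only affects $F$ up to a Riemannian center-of-mass correction of size $O(\epsilon)$, and it can be eliminated canonically by using Karcher's center of mass, which is well defined and smooth under bounded geometry. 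The $C^l_b$-bounds on the transition maps $\phi_{2,1}$ imply that changing $i(p)$ yields a smooth common expression, so $F$ is smooth and $C^l_b$ for every $l$.

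For the equivariant case I would choose the net $\{x_i\}$ to be $\Gamma$-invariant (select representatives in a fundamental domain and translate; FUPD ensures the result is still $\delta_M$-separated up to a uniform constant), average $\chi_i$ over $\Gamma$ if necessary so that $\chi_{\gamma \cdot i}(p) = \chi_i(\gamma^{-1} p)$, and use the $\Gamma$-equivariance of $f$ together with the isometric action on $N$, which commutes with $\exp$, to conclude $F(\gamma p) = \gamma F(p)$. The homotopy is then
\begin{equation}
H(p,t) := \exp_{f(p)}\bigl( t \, \exp^{-1}_{f(p)}(F(p)) \bigr),
\end{equation}
which is well defined because $d(F(p), f(p)) \leq \epsilon$ is less than $\mathrm{inj}_N$; its uniform continuity reduces to the uniform continuity of $f$, of $F$, and of $\exp$ on uniformly bounded balls, and $\Gamma$-equivariance is inherited from that of $f$ and $F$.

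The main obstacle is the gluing step: different local approximations $f_j(p)$ a priori live in different tangent spaces $T_{f(x_j)}N$, and a naive convex combination is ill-defined. Controlling this with a Riemannian center of mass requires that all relevant $f_j(p)$ sit inside one geodesically convex ball of $N$, which is the reason for carefully prescribing $\epsilon$ and shrinking $\delta_M$ at the beginning, and for invoking the $C^l_b$-bounded transition maps of normal charts to transfer uniform estimates from one base point to another. Once this is secured, the smoothness, the uniform $C^l$-bounds, and the $\Gamma$-equivariance all propagate, and the final statement (existence of a Lipschitz, smooth, uniformly proper approximation when $f$ is uniform) follows from Remark~\ref{uniformlyp} applied to $F \sim_\Gamma f$.
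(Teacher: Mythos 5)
Your approach is genuinely different from the paper's, and one of your two departures is an improvement while the other hides a real gap. The paper does not use a partition of unity at all: it fixes a countable, locally finite family of balls and builds $F$ as a pointwise limit of a sequence $F_0 = f, F_1, F_2, \dots$, where $F_{i+1}$ modifies $F_i$ only in normal coordinates around (the $\Gamma$-orbit of) one center $\tilde{x}_i$, using exactly the mollification $G_\nu$ of Remark~\ref{quindi}. The key point is that this $G_\nu$ already contains a cut-off $\chi$ and so agrees with the unmollified map outside a slightly smaller ball; this makes each $F_{i+1}$ glue with $F_i$ by the plain Gluing Lemma, with no need to reconcile values coming from different charts. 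Your scheme replaces this by a partition of unity and a weighted average in a chosen normal chart, which is where the gap sits: the displayed formula for $F$ genuinely depends on the choice of reference index $i(p)$, so as written it does not define a single-valued map. You acknowledge this and propose replacing it by the Karcher center of mass, and that idea is sound, but it is precisely the hard part: you would have to (i) show the Karcher mean of the $f_j(p)$ with weights $\chi_j(p)$ exists uniquely in a convex ball (fine for $\epsilon$ small), and (ii) establish uniform $C^l_b$ bounds on the map $p \mapsto$ Karcher mean via an implicit-function-theorem argument with constants controlled by the bounded-geometry data of $N$. None of that is done, and it is nontrivial, whereas the paper's cut-off gluing sidesteps the problem entirely and hands you the $C^l_b$ bounds from estimate \eqref{haha}.

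A second, smaller issue is the equivariant partition of unity: ``averaging $\chi_i$ over $\Gamma$'' is not available when $\Gamma$ is infinite. The paper's mechanism, and the correct fix, is to build the cover and the associated data on $X = M/\Gamma$ (which has bounded geometry by Proposition~\ref{Gammaq}) and pull everything back; then equivariance is automatic. On the positive side, your homotopy $H(p,t) = \exp_{f(p)}\bigl(t\,\exp^{-1}_{f(p)}(F(p))\bigr)$ is cleaner than the paper's: the paper constructs $H$ by the same iterative scheme with a $t$-dependent mollification parameter $t\cdot\nu$ and then has to track $\delta$'s through $K$ gluing steps to verify uniform continuity, whereas your geodesic interpolation gives uniform continuity and $\Gamma$-equivariance directly from the corresponding properties of $f$, $F$ and the exponential map, using only $d(F(p),f(p)) \leq \epsilon < \mathrm{inj}_N$. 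So the verdict is: correct strategy, nicer homotopy, but the gluing step as stated has a genuine well-definedness gap whose Karcher-mean repair is claimed rather than carried out, and the equivariance of the partition of unity needs the quotient construction rather than averaging.
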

\begin{proof}
	Since $f$ is uniformly continuous, there are two positive numbers $\sigma_1 < \sigma_2$ both $N$-small and there is an $M$-small number $\delta_f$ such that $f(B_{\delta_f}(q)) \subseteq B_{\sigma_1}(f(q))$.\\
	Let us consider the Riemannian manifold $X := M/\Gamma$. Let us define $R_0 := \frac{1}{4} min\{\delta_f, inj_X\}$.
	\\Following Proposition \ref{Gammaq}, observe that $R_0 \leq inj_M$ and $R_0 \leq \delta_0$ where $\delta_0$ is the constant given by the FUPD action of $\Gamma$. Fix then $\delta_1 < \delta_2 < R_0$ where $\delta_2$ is $X$-small. 
	\\By Lemma 2.16 of \cite{bound}, there is a number $K$ and a countable cover of $X$ given by $\{B_{\delta_1}(x_i)\}$ such that for all $x$ in $X$ the ball $B_{\delta_2}(x)$ intersects at most $K$ balls of $\{B_{\delta_2}(x_i)\}_i$.
	\\Consider the preimage of the cover $\{B_{\delta_2}(x_i)\}_i$ on $M$. Since $\delta_2 < \delta_0$, it has the form $ \{ \sqcup_{\gamma \in \Gamma} B_{\delta_2}(\gamma \tilde{x}_i)\}_i$, where $\tilde{x}_i$ is an element of the fiber of $x_i$.
	\\Moreover $f(B_{\delta_2}(q)) \subseteq B_{\sigma_1}(f(q))$ for all $q$ in $M$.
	\\
	\\Fix $\epsilon < \sigma_2 - \sigma_1$ and let $C$ be a number which is greater then the Lipschitz constants of $\exp_{f(\gamma \tilde{x}_i)}$, $ exp_{\gamma \tilde{x}_i}$ and their inverses. Fix $F_0 := f$. Then define for all $i$ in $\mathbb{N}$
	\begin{equation}
		F_{i+1}(p) := \begin{cases} \exp_{f(\gamma \tilde{x}_i)} \circ G_{i,\gamma, \nu} \circ exp_{\gamma \tilde{x}_i}^{-1} \mbox{  if   } p \in B_{R_0}(\bigcup_\gamma \{\gamma \tilde{x}_i\}) \\
			F_i(p) \mbox{   otherwise} \end{cases}
	\end{equation}
	where  $G_{i,\gamma,\nu}: B_{\delta_2}(0)\subset T_{\gamma \tilde{x}_i} M \cong \mathbb{R}^m \longrightarrow B_{\sigma_2}(0)\subset T_{f(\gamma \tilde{x}_i)} N \subset \mathbb{R}^n$ is a $C^k_{b}$-approximation of $g_{i,\gamma} = \exp_{f(\gamma \tilde{x}_i)}^{-1} \circ F_i \circ exp_{\gamma \tilde{x}_i}$ defined by using a function $\chi$ as in Remark \ref{quindi} and $\nu$ such that
	\begin{equation}
	   d(p, q) \leq \nu \implies d(f(p), f(q)) \leq \frac{\epsilon}{CK}.
	\end{equation}
	In particular, in order to obtain the $\Gamma$-equivariance of $F_{i+1}$, we choose for each $i$ a $ \tilde{x}_i$ in the fiber of $x_i$. Fix an orthonormal basis of $T_{\tilde{x}_i}M$ and an orthonormal basis of $T_{f(\tilde{x}_i)}N$. We define $G_{i, e, \nu}$ with respect to the coordinates on the tangent spaces induced by these basis. Then we define for each $\gamma$ in $\Gamma$
	\begin{equation}
	 G_{i,\gamma,\nu} := d\gamma \circ G_{i,e,\nu} \circ d \gamma^{-1}.
	\end{equation}
	Observe that, by the Gluing lemma (Theorem III.9.4, p. 83. of the book of Dugundji \cite{Dug}), $F_i$ is well-defined. Finally we define
	\begin{equation}
		F(p) := \lim_{i \rightarrow + \infty} F_{i}(p).
	\end{equation}
	Let us check all the properties of $F$:
	\begin{itemize}
		\item $F$ is well-defined: consider $p$ a point in $M$. Let $s$ be the covering $s: M \longrightarrow X$. Then $s(B_{\delta_2}(p))$ intersects at most $K$ balls $B_{\delta_2}(x_i)$. Moreover, since $\delta_2 < \frac{1}{4}\delta_0$, then for all of these $x_i$ there exists only one $\gamma$ in $\Gamma$ such that $B_{\delta_2}(p)$ intersects $B_{\delta_2}(\gamma \tilde{x}_i)$.
		\\For all $p$ there are at most $K$ indexes $i_j$ such that $F_{i_j}(p) \neq F_{i_j + 1}(p)$ and so the limit exists since the sequence $\{F_i(p)\}_{i \geq i_K + 1}$ is constant.
		\item Let us denote by $x := exp^{-1}_{\gamma \tilde{x}_i}(p)$. We obtain $d(F(p), f(p)) \leq \epsilon.$
		Indeed, by applying Proposition \ref{primaappr} and  Remark \ref{epsnu}, 
		\begin{equation}
		\begin{split}
			d(f(p), F(p)) &\leq d(f(p), F_{i_1}(p)) + ... + d(F_{i_K}(p), F_{i_{K+1}}(p))\\
			&\leq C (\vert g_{0,\gamma, \nu}(x) - G_{0, \gamma, \nu}(x)\vert + ... + \vert g_{i_K, \gamma, \nu}(x) - G_{i_K, \gamma, \nu}(x) \vert )\\ 
			&\leq C \cdot K\cdot \frac{\epsilon}{C K} = \epsilon
			\end{split}
		\end{equation}
		\item $F$ is a $C^k_{b}$-map for each $k$ in $\mathbb{N}$. Consider a point $p$. As a consequence of Lemma 2.16 of \cite{bound}, there is a finite sequence $i_1,..., i_K$ of the indexes $i$ such that $F_{i_j+1} \neq F_{i_j}$ on $B_{\delta_2}(p)$. So, we have to study the boundedness of the derivatives of
		\begin{equation}
			\begin{split}
				F_p :&= exp^{-1}_{F_{i_K +1}(p)} \circ F_{i_K + 1} \circ exp_{p}\\
				&= exp^{-1}_{F_{i_K +1}(p)} \circ \exp_{f(\gamma x_K)} \circ G_{i_K,\gamma, \nu} \circ exp_{\gamma x_{i_K}}^{-1} \circ exp_{p}.
			\end{split}
		\end{equation}
		Observe that $exp^{-1}_{F_{i_K +1}(p)} \circ \exp_{f(\gamma \tilde{x}_i)}$ and $exp_{\gamma x_{i_K}}^{-1} \circ exp_{p}$ are changes of normal coordinates, and so, since we are in a bounded geometry setting, they are $C^k_{b}$-maps with uniformly bounded norm. Consider $G_{i_K,\gamma, \nu}$. By applying Proposition \ref{primaappr}, we obtain
		\begin{equation}\label{haha}
			\vert G_{i_K,\gamma, \nu}\vert_{k} \leq C(\nu , k)\vert g_{i_{K},\gamma, \nu}\vert_0 \leq C(\nu, k)\sigma_2.
		\end{equation}
		\item $F$ is $\Gamma$-equivariant. In order to prove this we have to check that all the $F_i$ are $\Gamma$-equivariant. Observe that $F_0 = f$ is $\Gamma$-equivariant. Moreover, by the definition of $F_i$, if $F_i$ is $\Gamma$-equivariant, then also $F_{i+1}$ is $\Gamma$-equivariant. We conclude by observing that
		\begin{equation}
			\gamma F(p) = \gamma F_{i_{K}+1}(p) = F_{i_{K}+1} (\gamma p) = F(\gamma p).
		\end{equation}
	\end{itemize}
	In order to conclude the proof we have to show that $F$ and $f$ are $\Gamma$-uniformly-homotopic. \\Let us define the map $H_0: (M \times [0,1], g + dt^2) \longrightarrow (N,h)$ as $H_0(p,t) := f(p)$. Fix for each $i$ a $\tilde{x}_i$ in the fiber of $x_i$. Fix an orthonormal basis of $T_{\tilde{x}_i}M$ and an orthonormal basis of $T_{f(\tilde{x}_i)}N$. We denote by $H_{i, e, \nu}$ the $C^k_{b}$-approximation\footnote{With respect to the coordinates on the tangent spaces induced by these basis. } of 
	\begin{equation}
	h_{i,\gamma,t} = \exp_{f(\gamma \tilde{x}_i)}^{-1} \circ H_{i}(\cdot, t) \circ exp_{\gamma \tilde{x}_i}
	\end{equation}
	defined by using $t \cdot \nu$ and the function $\chi$. \\Then, for each $\gamma$ in $\Gamma$ we have $H_{i,\gamma,t}: B_{\delta_2}(0)\subset T_{\gamma \tilde{x}_i} M \cong \mathbb{R}^m \longrightarrow B_{\sigma_2}(0)\subset T_{f(\gamma \tilde{x}_i)} N \subset \mathbb{R}^n$ which is the map 
	\begin{equation}
	 H_{i,\gamma, t} := d\gamma \circ H_{i,e, t \cdot \nu} \circ d \gamma^{-1}.
	\end{equation}So, for each $i \in \mathbb{N}$, we define 
	\begin{equation}
		H_{i+1}(p,t) := \begin{cases} \exp_{f(\gamma \tilde{x}_i)} \circ H_{i,\gamma, t } \circ exp_{\gamma \tilde{x}_i}^{-1} \mbox{  if   } p \in B_{inj_M}(\cup_\gamma \{\gamma \tilde{x}_i\}) \\
			H_i(p, t) \mbox{   otherwise} \end{cases}
	\end{equation}
	Finally we define the map $H: (M \times [0,1], g + dt^2) \longrightarrow (N,h)$ as
	\begin{equation}
		H(p, t) = \lim_{i \rightarrow +\infty} H_{i}(p, t).
	\end{equation}
	As well as for $F$, also $H$ is well-defined and $\Gamma$-equivariant.
	\\We will check the uniformly continuity of $H$ in the following way. First we observe that, by definition, $H_0$ is uniformly continuous. Then for each $\varepsilon$ there is a $\delta_0(\epsilon)$ such that 
	\begin{equation}
	 d((p_0, t), (q, s)) \leq \delta_0(\varepsilon) \implies d(H_0(p_0,t), H_0(q,s)) \leq \varepsilon.
	\end{equation}
	Observe that, since $H_0$ is uniformly continuous, then $\delta_0$ does not depends on $(p_0,t)$. Moreover, without loss of generality we can also suppose that $\delta_0 (\varepsilon) \leq \delta_{\chi}(\frac{\varepsilon}{\sigma_2})$ and that $\delta_0$ is non-decreasing with respect to $\varepsilon$.
	\\Let us assume now that $H_i$ is continuous. Fix an $\varepsilon$ and a point $(p,t)$ and let us denote by $\delta_i(\varepsilon, p, t)$ a number such that, for each $q$ in $M$ 
	\begin{equation}
	d((p,t),(q,s)) \leq \delta_i(\varepsilon, p, t) \implies d(H_i(p,t), H_i(q,s)) \leq \varepsilon.
	\end{equation}
	Assume that $\delta_i(\varepsilon, p, t) \leq \delta_{\chi}(\frac{\varepsilon}{\sigma_2})$ and that $\delta_i(\cdot, p,t)$ is non-decreasing with respect to $\varepsilon$ for each $p$ and $t$.
	\\Choose $p_0$ in $M$: if $B_{\delta_2}(p_0) \cap [\sqcup_{\gamma \in \Gamma} B_{\delta_2}(\gamma \tilde{x}_i)] = \emptyset$, then
	\begin{equation}
	d((p_0,t),(q,s)) \leq \delta_i(\varepsilon, p_0, t) \implies d(H_{i + 1}(p_0,t), H_{i+1}(q,s)) \leq \varepsilon.
	\end{equation}
	So we can set
	\begin{equation}
	\delta_{i+1}(\varepsilon, p_0, t) := \delta_{i}(\varepsilon, p_0, t).
	\end{equation}
	Let us suppose that $ B_{\delta_2}(p_0) \cap [\sqcup_{\gamma \in \Gamma} B_{\delta_2}(\gamma \tilde{x}_i)] \neq \emptyset$.
	In this case we have to study the uniform continuity of
	\begin{equation}
	\exp_{f(\gamma \tilde{x}_i)} \circ H_{i,\gamma, t} \circ exp_{\gamma \tilde{x}_i}^{-1}.
	\end{equation}
	In particular, we start studying the distance
	\begin{equation}
	 d(H_{i+1}(p_0,t), H_{i+1}(p_0, s)).
	\end{equation}
	Recall that $h_{i,\gamma,t}: T_{\gamma \tilde{x}_i}M \longrightarrow T_{f(\gamma \tilde{x}_i)}N$ is defined as
	\begin{equation}
	h_{i,\gamma,t} = \exp_{f(\gamma \tilde{x}_i)}^{-1} \circ H_{i}(\cdot, t) \circ exp_{\gamma \tilde{x}_i}
	\end{equation}
	and that $H_{i,\gamma, t}$ is its approximation defined by using $t\cdot \nu$ and $\chi$.
	\\Observe that the exponential maps and their inverses are Lipschitz maps with uniformly bounded constant $C$. Assume $C >1$.
	\\Fix a $\varepsilon >0$ and let us denote by $x_0 := exp^{-1}_{\gamma \tilde{x}_i}(p_0)$. Define
	\begin{equation}
	\delta_{h,i}(\varepsilon, x_0, t) := \frac{1}{C}\delta_i(\frac{\varepsilon}{C}, p_0, t).
	\end{equation}
	Observe that $\delta_{h,i}(\varepsilon, x_0, t) \leq \delta_i(\varepsilon, p_0, t) \leq \delta_{\chi}(\frac{\varepsilon}{\sigma_2})$. Moreover, $h_{i, \gamma, t}$ is also continuous in $t$, indeed,
	\begin{equation}
	    \vert (x_0,t) - (x_2, s)\vert \leq \delta_{h,i}(\varepsilon, x_0, t) \implies d(h_{i,\gamma, t}(x_0), h_{i,\gamma, s}(x_2)) \leq \varepsilon.
	\end{equation}
	Then for any $s$ in $[0,1]$ such that $\vert s - t\vert \leq \delta_{h,i}(\frac{\varepsilon}{2}, x_0, t)$, then
		\begin{equation}
		\begin{split}
			&\vert H_{i,\gamma, s}(x_0) - H_{i,\gamma, t}(x_0)\vert \leq (1 - \chi(\vert \vert x_0 \vert \vert))\vert h_{i,\gamma,s}(x_0) - h_{i,\gamma,t}(x_0)\vert \\
			& + \chi(\vert \vert x_0 \vert \vert) \int_{B^m} \vert h_{i,\gamma,s}(x_0 - s\cdot \nu y) - h_{i,\gamma,s}(x_0 - t\cdot \nu y)\vert \phi(y) dy \\
			& + \chi(\vert \vert x_0 \vert \vert) \int_{B^m} \vert h_{i,\gamma,s}(x_0 - t\cdot \nu y) - h_{i,\gamma,t}(x_0 - t\cdot \nu y)\vert\phi(y) dy\\ &\leq (1 - \chi(\vert \vert x_0 \vert \vert))\frac{\varepsilon}{2} + \chi(\vert \vert x_0 \vert \vert)\frac{\varepsilon}{2} + \frac{\varepsilon}{2}  \leq \varepsilon.
		\end{split}
	\end{equation}
	This means that, given
	\begin{equation}
	\tilde{\delta_i}(\varepsilon, p_0, t) := \frac{1}{C}\delta_{h,i}(\frac{\varepsilon}{2C}, p_0, t) = \frac{1}{C^2}\delta_i(\frac{\varepsilon}{2C^2}, p_0, t),
	\end{equation}
	then, since $H_{i+1} = \exp_{f(\gamma \tilde{x}_i)} \circ H_{i,\gamma, t} \circ exp_{\gamma \tilde{x}_i}^{-1}$,
	\begin{equation}
	    d((p_0,t),(p_0,s)) \leq \tilde{\delta_i}(\varepsilon) \implies d(H_{i+1}(p_0,t), H_{i+1}(p_0,s)) \leq \varepsilon.
	\end{equation}
	Let us consider, now, the distance
	\begin{equation}
	d(H_{i+1}(p_0, t), H_{i+1}(q, t)).
	\end{equation}
	Let us denote by $\Delta_{i}(\varepsilon) := \delta_{h,i}(\frac{\varepsilon}{4})$. Then, by Remark \ref{epsnu} and by 
	\begin{equation}
	\delta_{h,i}(\frac{\varepsilon}{4}) \leq \delta_{i}(\frac{\varepsilon}{4}) \leq \delta_{\chi}(\frac{\varepsilon}{4\sigma_2}) \leq \delta_{\chi}(\frac{\varepsilon}{4\vert h_{i,\gamma,t}\vert_0})
	\end{equation}
	we obtain that
	\begin{equation}
	d(x_1, x_2) \leq \Delta_{i}(\varepsilon) \implies d(H_{i,\gamma, t }(x_1), H_{i,\gamma, t }(x_2)) \leq \varepsilon.
	\end{equation}
	Moreover, if we define
	\begin{equation}
	 \hat{\delta_i}(\varepsilon) := \frac{1}{C}\Delta_{i}(\frac{\varepsilon}{C}, p_0, t),
	\end{equation}
	then 
	\begin{equation}
	d(p_0, q) \leq \hat{\delta_i}(\varepsilon, p_0, t) \implies d(H_{i+1}(p_0, t),H_{i+1}(q,t))\leq \varepsilon.
	\end{equation}
	So given a point $p_0$ such that $B_{\delta_2}(p_0)$ intersects $\bigsqcup\limits_{\gamma \in \Gamma} B_{\delta_2}(\gamma \tilde{x}_i)$, then 
	\begin{equation}\label{finesa}
	  d((p_0,t), (q,s)) \leq \delta_{i+1}(\varepsilon, p_0, t) \implies d(H_{i+1}(p_0, t),H_{i+1}(q,s))\leq \varepsilon.
	\end{equation}
	where 
	\begin{equation}
	    \begin{split}
	    \delta_{i+1}(\varepsilon, p_0, t) :&= \min \{\hat{\delta_i}(\frac{\varepsilon}{2}, p_0, t), \tilde{\delta_i}(\frac{\varepsilon}{2}, p_0, t) \} \\
	    &= \min \{\frac{1}{C^2}\delta_i(\frac{\varepsilon}{4C^2}, p_0, t), \frac{1}{C^2}\delta_i(\frac{\varepsilon}{8C^2}, p_0, t)\} \\
	    &=\frac{1}{C^2}\delta_i(\frac{\varepsilon}{8C^2}, p_0, t),
	    \end{split}
	\end{equation}
	indeed
	\begin{equation}
	\begin{split}
	    d(H_{i+1}(p_0, t),H_{i+1}(q,s)) &\leq d(H_{i+1}(p_0, t),H_{i+1}(p_0,s)) + d(H_{i+1}(p_0, s),H_{i+1}(q,s))\\ &\leq \frac{\varepsilon}{2} + \frac{\varepsilon}{2} = \varepsilon.
	\end{split}
	\end{equation}
	Then, since a ball $B_{\delta_2}(p_0)$ intersects $\bigsqcup\limits_{\gamma \in \Gamma} B_{\delta_2}(\gamma \tilde{x}_i)$ at most $K$ times, we obtain that there is a $\delta(\varepsilon)$ which is given by 
	\begin{equation}
	 \delta(\varepsilon) := \frac{1}{C^{2K}}\delta_0(\frac{\varepsilon}{8^K C^{2K}})
	\end{equation}
	 such that for each $(p,t)$ and $(q,s)$ in $M \times [0,1]$, we have
	\begin{equation}
	d((p,t),(q,s)) \leq \delta(\varepsilon) \implies d(H(p,t), H(q,s)) \leq \varepsilon.
	\end{equation}
\end{proof}
\begin{cor}\label{corappr}
	Let $f: (M,g) \longrightarrow (N,h)$ be a uniformly continuous map between manifolds of bounded geometry. Assume the existence of a closed set $C$ such that $f_{\vert_{M \setminus C}}$ is a $C^k_{b}$-map. Then for all $\epsilon > 0$ there is a map $F:(M,g) \longrightarrow (N,h)$ such that $F$ is a $C^k_{b}$-map. Moreover if $C_\epsilon$ is the $\epsilon$-neighborhood of $C$, then 
	\begin{equation}
		F_{\vert_{M \setminus C_\epsilon}} = f_{\vert_{M \setminus C_\epsilon}}.
	\end{equation}
	Finally if $C$ is $\Gamma$-invariant and $f$ is $\Gamma$-equivariant then also $F$ is $\Gamma$-equivariant and they are $\Gamma$-uniformly homotopic.
\end{cor}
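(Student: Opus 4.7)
The plan is to run essentially the same iterative construction as in Proposition \ref{appr}, but restricting the mollification step to those coordinate patches that actually meet $C$, and shrinking the patches so that all modifications stay within $C_\epsilon$. First I would pick $\delta_1 < \delta_2 < R_0$ as in Proposition \ref{appr}, with the additional requirement $\delta_2 < \epsilon/3$, so that any patch $B_{\delta_2}(\gamma \tilde{x}_i)$ is contained in $C_\epsilon$ as soon as it meets $C$. I would then select the subset of indices $I := \{i : B_{\delta_2}(x_i) \cap s(C) \neq \emptyset\}$, where $s : M \to M/\Gamma$ is the quotient map (so that the $\Gamma$-invariance of $C$ guarantees $I$ is well-defined), and I would only perform the mollification step from Proposition \ref{appr} on indices $i \in I$, setting $F_{i+1} := F_i$ otherwise.

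The locality claim $F_{|M \setminus C_\epsilon} = f_{|M \setminus C_\epsilon}$ would follow from two observations. First, for $i \notin I$ nothing changes by construction. Second, for $i \in I$, the approximation $G_{i,\gamma,\nu}$ coincides with its source function outside the support of the cutoff $\chi$ in Remark \ref{quindi}, so the modification is supported inside $B_{\delta_2}(\gamma \tilde{x}_i) \subset C_\epsilon$. For the $C^k_b$ regularity of the resulting map $F$, the argument splits as follows: outside $C_\epsilon$ we have $F = f$, and by hypothesis $f_{|M \setminus C}$ is $C^k_b$, so uniform bounds on normal-coordinate derivatives hold there; inside the modified region one repeats verbatim the derivative estimate \eqref{haha} from Proposition \ref{appr}, using that $f_{|M \setminus C}$ being $C^k_b$ gives the required $C^0$ bound on the ambient functions $g_{i,\gamma}$ that serve as input for the mollification (and on patches entirely inside $C$, the uniform continuity of $f$ plus boundedness of the exponential changes of coordinates suffice). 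The $\Gamma$-equivariance and the existence of the $\Gamma$-uniform homotopy are then obtained by the same lift-and-extend prescription $G_{i,\gamma,\nu} = d\gamma \circ G_{i,e,\nu} \circ d\gamma^{-1}$ and the linear-in-$t$ family $H_{i,\gamma,t}$ used in Proposition \ref{appr}; since the homotopy also leaves $f$ unchanged outside $C_\epsilon$, its uniform continuity is inherited from that of $f$ on $M \setminus C_\epsilon$ and from the estimate already carried out in Proposition \ref{appr} on the modified patches.

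The main obstacle I anticipate is verifying $C^k_b$-regularity \emph{across} the boundary of $C_\epsilon$, i.e. at points $p$ whose normal-coordinate ball $B_{\delta_2}(p)$ straddles $C_\epsilon$: there one must glue the unmodified $f$ (known to be $C^k_b$ only on $M \setminus C$) to the mollified piece, and check that transition functions between the two do not blow up $C^l$-norms. This is handled by choosing the cutoff $\chi$ once and for all with uniformly bounded derivatives, so that the convex-combination formula in Remark \ref{quindi} produces derivatives that are uniformly controlled in terms of $|g|_0$ plus $|g|_l$ on the unmodified part; everything else is bookkeeping of constants, entirely parallel to the proof of Proposition \ref{appr}.
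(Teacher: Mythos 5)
The paper states Corollary \ref{corappr} without proof, treating it as an immediate by-product of the construction in Proposition \ref{appr}. Your proposal — rerun the same mollification scheme, but only on those patches whose outer ball $B_{\delta_2}(x_i)$ meets $s(C)$, with $\delta_2$ chosen small enough relative to $\epsilon$ so that the union of all modified balls lies inside $C_\epsilon$ — is exactly the natural localization the corollary calls for, and it does work. The locality claim and the $\Gamma$-equivariance/homotopy claims follow by the same mechanism you describe, since outside $\bigcup_{i\in I,\gamma}B_{\delta_2}(\gamma\tilde x_i)$ nothing is touched and inside it the construction of Proposition \ref{appr} runs verbatim.

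Two small imprecisions worth flagging. First, the $\Gamma$-invariance of $C$ is not what makes $I$ well-defined (the condition $B_{\delta_2}(x_i)\cap s(C)\neq\emptyset$ is well-posed regardless); it is what guarantees that the family of modified balls in $M$, namely $\bigcup_{i\in I,\gamma}B_{\delta_2}(\gamma\tilde x_i)$, is $\Gamma$-invariant, which in turn is what you need for the resulting $F$ to be $\Gamma$-equivariant. Second, the real transition zone is not $\partial C_\epsilon$ — there $F$ agrees with $f$ on a whole $\delta_2$-ball, so nothing to glue — but rather the blending annuli $B_{r+h}(\gamma\tilde x_i)\setminus B_r(\gamma\tilde x_i)$ of the selected patches, some of which may still intersect $C$. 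The point that rescues the $C^k_b$ estimate there, and which your write-up only implicitly uses, is that the \emph{inner} balls $B_{\delta_1}(\gamma\tilde x_i)$ with $i\in I$ already cover $C$ (since $\{B_{\delta_1}(x_i)\}$ covers $M/\Gamma$ and any $i$ with $B_{\delta_1}(x_i)\ni s(p)$, $p\in C$, necessarily lies in $I$). Hence every point of $C$ is eventually hit by a pure mollification step, whose $C^k$-norm is controlled by $|g|_0$ alone as in estimate \eqref{haha}; the finitely many later blending steps at that point then only multiply the bound by the fixed factor coming from $|\chi|_k$. With that clarification, the bookkeeping you defer to is indeed routine and the proof is complete.
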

\begin{cor}\label{C^k_bhomo}
	Let $f, f': (M,g) \longrightarrow (N,h)$ be two smooth $C^k_b$-maps such that $f \sim_\Gamma f'$, where $\Gamma$ acts FUPD on $M$ and $N$ by isometries. Let us fix some normal coordinates $\{x^i, s\}$ on $M \times [0,1]$ and $\{y^j\}$ on $N$. Then there is a uniformly continuous homotopy $H: (M \times [0,1], g + dt^2) \longrightarrow (N,h)$ between $f$ and $f'$ such that all its derivatives in normal coordinates of order minor or equal of $k$ are uniformly bounded.
\end{cor}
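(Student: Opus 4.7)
The plan is to use Corollary \ref{corappr} to smooth a modified version of the given $\Gamma$-uniform homotopy. The key idea is to first reparametrize in time so that the homotopy is identically $f$ on a neighborhood of $t = 0$ and identically $f'$ on a neighborhood of $t = 1$; this creates ``buffer strips'' near the boundary on which the homotopy is already $C^k_b$, and the approximation procedure can then be arranged to leave those strips untouched, so the endpoint conditions are preserved automatically.

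Concretely, let $h : M \times [0,1] \longrightarrow N$ be the uniformly continuous $\Gamma$-homotopy guaranteed by $f \sim_\Gamma f'$. First I would fix a small $\epsilon_0 > 0$ and a smooth non-decreasing $\chi : \mathbb{R} \to [0,1]$ with $\chi(t) = 0$ for $t \leq \epsilon_0$ and $\chi(t) = 1$ for $t \geq 1 - \epsilon_0$, and define a map $\tilde{h} : M \times \mathbb{R} \longrightarrow N$ by $\tilde{h}(x,t) := h(x, \chi(t))$ on $M \times [0,1]$, $\tilde{h}(x,t) := f(x)$ for $t \leq 0$, and $\tilde{h}(x,t) := f'(x)$ for $t \geq 1$. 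Continuity at $t = 0, 1$ holds because $\chi(0)=0$, $\chi(1)=1$; uniform continuity on $M \times \mathbb{R}$ follows from the uniform continuity of $h$ and of $f, f'$ together with the Lipschitz character of $\chi$; and $\tilde{h}$ is $\Gamma$-equivariant with $\Gamma$ acting trivially on $\mathbb{R}$. Since $M$ has bounded geometry, so does $M \times \mathbb{R}$ with the product metric.

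Next I would apply Corollary \ref{corappr} to $\tilde{h}$ with the closed $\Gamma$-invariant subset $C := M \times [\epsilon_0 / 2,\, 1 - \epsilon_0 / 2]$. On the complement of $C$ the map $\tilde{h}$ equals $f \circ \mathrm{pr}_M$ or $f' \circ \mathrm{pr}_M$, which is a $C^k_b$-map because $f$ and $f'$ are (projection has trivial derivatives in the $\mathbb{R}$-factor). The corollary then produces a $\Gamma$-equivariant $C^k_b$-map $\hat{H} : M \times \mathbb{R} \longrightarrow N$ that agrees with $\tilde{h}$ off the $\epsilon$-neighborhood $C_\epsilon$, for any prescribed $\epsilon > 0$. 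Choosing $\epsilon < \epsilon_0 / 2$ guarantees that the slices $M \times \{0\}$ and $M \times \{1\}$ lie in $(M \times \mathbb{R}) \setminus C_\epsilon$, so $\hat{H}(x,0) = \tilde{h}(x,0) = f(x)$ and $\hat{H}(x,1) = \tilde{h}(x,1) = f'(x)$. Setting $H := \hat{H}\vert_{M \times [0,1]}$ gives the desired homotopy; being a $C^k_b$-map, $H$ is uniformly continuous and all of its derivatives of order $\leq k$ in the normal coordinates $\{x^i, s\}$ on $M \times [0,1]$ and $\{y^j\}$ on $N$ are uniformly bounded.

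The main obstacle is precisely the endpoint preservation: applying Corollary \ref{corappr} or Proposition \ref{appr} directly to $h$ would in general perturb the values at $t = 0$ and $t = 1$ and destroy the property of being a homotopy from $f$ to $f'$. The reparametrization trick, combined with the fact that the corollary's approximation coincides with the original map outside a prescribed neighborhood of $C$, is what resolves this. All remaining steps---uniform continuity of $\tilde{h}$, bounded geometry of $M \times \mathbb{R}$, $\Gamma$-invariance of $C$, and the fact that $f \circ \mathrm{pr}_M$ inherits the $C^k_b$ property from $f$---are routine verifications.
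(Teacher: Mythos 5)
Your proposal is correct, and the key idea — reparametrizing in time so the homotopy is locally constant near the endpoints, then applying Corollary \ref{corappr} with the $\Gamma$-invariant closed strip $C = M \times [\epsilon_0/2,\,1-\epsilon_0/2]$ so that the smoothing leaves the slices $M\times\{0\}$ and $M\times\{1\}$ untouched — is exactly what makes this work. The paper states Corollary \ref{C^k_bhomo} without proof, and its placement immediately after Corollary \ref{corappr} makes clear that this is the intended mechanism; your argument supplies the missing endpoint-preservation step that a naive application of Proposition \ref{appr} or Corollary \ref{corappr} to $h$ directly would break. Two further good choices in your write-up: extending to $M\times\mathbb{R}$ sidesteps the fact that $M\times[0,1]$ is a manifold with boundary, which the paper's Corollary \ref{corappr} does not explicitly cover; and since $\chi(t)=0$ for $t\le\epsilon_0$ and $\chi(t)=1$ for $t\ge 1-\epsilon_0$, the piecewise extension of $\tilde h$ beyond $[0,1]$ is automatically consistent and uniformly continuous. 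The routine verifications you cite (bounded geometry of $M\times\mathbb{R}$, $\Gamma$-invariance of $C$, and $f\circ\mathrm{pr}_M$ being $C^k_b$ as a composition of $C^k_b$ maps) are indeed straightforward from the paper's definitions and remarks. The argument is complete.
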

\begin{cor}\label{puppa}
		Let $f:(M,g) \longrightarrow (N,h)$ be a uniform homotopy equivalence between manifolds of bounded geometry. Then $f$ is uniformly proper and, in particular, $f$ is a uniform map.
	\end{cor}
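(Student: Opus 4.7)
By unwinding the definition (taking $\Gamma$ trivial), a uniform homotopy equivalence $f:(M,g) \longrightarrow (N,h)$ comes equipped with a uniformly continuous map $g:(N,h) \longrightarrow (M,g)$ and a uniformly continuous homotopy $g \circ f \sim id_M$. The plan is a two-step transfer of uniform properness: first onto $g \circ f$ via Remark \ref{uniformlyp}, and then onto $f$ itself via an elementary preimage inclusion.

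For the first step, $id_M$ is tautologically uniformly proper with modulus $S(R)=R$; it is uniformly continuous and uniformly homotopic to $g \circ f$, which is itself uniformly continuous as a composition of uniformly continuous maps. Remark \ref{uniformlyp} therefore supplies a modulus $S_{g\circ f}$ with $diam((g\circ f)^{-1}(B)) \leq S_{g\circ f}(diam(B))$ for every $B \subset M$. For the second step, given any $A \subset N$ the set-theoretic inclusion
\begin{equation}
f^{-1}(A) \;\subseteq\; (g\circ f)^{-1}(g(A))
\end{equation}
reduces the task to controlling $diam(g(A))$ in terms of $diam(A)$. Here I will use that $(N,h)$ is a geodesically complete Riemannian manifold of bounded geometry, and hence a length space. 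Concretely, I fix $\delta>0$ such that $d_N(y_1,y_2)\leq\delta$ forces $d_M(g(y_1),g(y_2))\leq 1$, and for any $y_0, y \in A$ I subdivide a minimising geodesic between them into at most $\lceil diam(A)/\delta\rceil$ consecutive pieces of length $\leq \delta$; applying the uniform-continuity modulus piecewise and summing yields a function $T$ with $diam(g(A)) \leq T(diam(A))$. Combining, $diam(f^{-1}(A)) \leq S_{g\circ f}(T(diam(A)))$, so $f$ is uniformly proper; together with its assumed uniform continuity, $f$ is therefore a uniform map.

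The entire argument reduces to a single application of Remark \ref{uniformlyp}, the preimage inclusion above, and one standard length-space estimate, so I foresee no real obstacle. The only step that requires any verification at all is the routine fact that a uniformly continuous map between complete Riemannian manifolds sends bounded sets to bounded sets with a controlled modulus, and this is immediate from geodesic completeness (Hopf--Rinow) in the bounded geometry setting.
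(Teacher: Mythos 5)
Your proof is correct, and it takes a genuinely different route from the paper's. The paper first invokes Proposition \ref{appr} (together with Remark \ref{uniformlyp}) to replace $f$, the homotopy inverse $g$, and the homotopy $H$ by smooth Lipschitz maps, and then derives the properness modulus directly from a three-term triangle inequality: $d_M(x_1,x_2)\leq 2C_H + C_g\,d_N(f(x_1),f(x_2))\leq 2C_H + C_g\,\mathrm{diam}(A)$, using the Lipschitz constants $C_H$ of $H$ and $C_g$ of $g$. You instead work entirely at the level of uniform-continuity moduli: you transfer uniform properness from $\mathrm{id}_M$ to $g\circ f$ via Remark \ref{uniformlyp}, exploit the set-theoretic inclusion $f^{-1}(A)\subseteq(g\circ f)^{-1}(g(A))$, and bound $\mathrm{diam}(g(A))$ by subdividing geodesics, which only requires that $(N,h)$ be a complete length space. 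The paper's approach yields an explicit affine modulus $S(R)=2C_H+C_gR$ at the cost of invoking the smoothing machinery of Section \ref{approx}; yours is more elementary (no smoothing, no Lipschitz upgrade) and shows the statement is really a length-space fact, at the cost of producing a less tidy modulus $S_{g\circ f}(\lceil R/\delta\rceil)$. One small remark: your closing sentence attributes the diameter bound on $g(A)$ to "geodesic completeness (Hopf--Rinow) in the bounded geometry setting," but bounded geometry plays no role there — completeness (so that $N$ is a geodesic space) and the uniform-continuity modulus of $g$ are all that the subdivision argument uses.
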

	\begin{proof}
		Let us denote by $g$ a uniform homotopy inverse of $f$ and by $H$ a uniformly continuous homotopy between $g \circ f$ and $id_X$. Because of Proposition \ref{appr} and Remark \ref{uniformlyp}, we can assume that $f$, $g$ and $H$ are Lipschitz maps. Then, if $x$ is a point in $M$,
		\begin{equation}
		d_X(x, g\circ f(x)) = d_X(H(x,0),H(x,1)) \leq C_H d_{X\times[0,1]}((x,0), (x,1)) = C_H
		\end{equation}
		where $C_H$ is the Lipschitz constant of $H$. Fix a subset $A$ of $N$. Let $x_1$ and $x_2$ be two points in $f^{-1}(A)$. Then \begin{equation}
		\begin{split}
		d_X(x_1,x_2) &\leq d_X(x_1, g\circ f (x_1)) + d_X( g\circ f (x_1),  g\circ f (x_2)) + d_X(x_1, g\circ f (x_1))\\
		&\leq 2C_H + C_gd_Y(f(x_1), f(x_2))\\
		&\leq 2C_H + C_gdiam(A)
		\end{split}
		\end{equation}
		where $C_g$ is the Lipschitz constant of $g$. And so we obtain
		\begin{equation}
		    diam(f^{-1}(A)) \leq 2C_H + C_gdiam(A).
		\end{equation}
	\end{proof}
\section{$L^{q,p}$-cohomology and pull-back}
\subsection{$L^p$-forms}
Let us consider a Riemannian manifold $(M,g)$ and let us denote by $\Omega^k_{c}(M)$ the space of complex differential forms with compact support. Fix a $p \in [1, + \infty)$. We can define a norm on $\Omega^*_c(M)$ as follows:
\begin{equation}
\vert\vert \alpha \vert\vert_{p} := [\int_M \vert\alpha\vert^p(q) d\mu_M(q) ]^\frac{1}{p},
\end{equation}
where $\mu_M$ is the measure on $M$ induced by $g$.
\begin{defn}
	Fix $ p \in [1, +\infty)$. We denote by $L^p\Omega^k(M)$ the Banach space given by the closure of $\Omega^*_{c}(M)$ with respect to the norm $\vert\vert \cdot \vert\vert_{p}$. Moreover we can also define the Banach space $\mathcal{L}^p(M):= \bigoplus \limits_{k \in \mathbb{N}} L^p\Omega^k(M)$.
\end{defn}
\begin{defn}
Consider two Riemannian manifolds $(M,g)$ and $(N,h)$. Let us consider a linear operator $A: \Omega^*(N) \longrightarrow \Omega^*(N)$. The operator $A$ is $\mathcal{L}^\star$-\textbf{bounded} if for each $p$ in $[1, + \infty)$ the operator $A_{\vert_{\Omega^*_c(M)}}$ is a bounded operator with respect to the $\mathcal{L}^p$-norm.
\end{defn}
\subsection{$L^{q,p}$-cohomology and reduced $L^{q,p}$-cohomology}
In the next section we use definitions and results from the work of  Gol'dshtein and Troyanov \cite{Gol} and from the work of Bei \cite{Bei2}. Consider a complete Riemannian manifold $(M,g)$. Then, for each choice of $q,p \in [1, +\infty)$ and $k$, we need a closed extension of the exterior derivative operator 
\begin{equation}
d_{q,p}^{k-1}: \Omega^{k-1}_c(M) \subset \mathcal{L}^q\Omega^{k-1}(M,g) \longrightarrow \mathcal{L}^p\Omega^{k}(M,g).
\end{equation}
\begin{defn}
The \textbf{minimal extension} $d_{min,q,p}^{k-1}$ of the exterior derivative $d_{q,p}^{k-1}$ has domain given by the $\mathcal{L}^q$-forms $\alpha$ such that there is a sequence of compactly supported differential forms $\{\alpha_k\}$ and an $\mathcal{L}^p$-form $\beta$ such that\footnote{For each $p$ in $[1, +\infty)$ we consider on $L^p\Omega^k(M)$ the topology induced by $\vert\vert \cdot \vert\vert_{p}$} $\alpha = \lim\limits_{k \rightarrow + \infty} \alpha_k$ and $\beta = \lim\limits_{k \rightarrow + \infty} d\alpha_k$.
\\Then we obtain
\begin{equation}
d_{min,q,p}^{k-1} \alpha := \lim\limits_{k \rightarrow + \infty} d\alpha_k.
\end{equation}
If there is no need to specify $q, p$ and $k$ we will just denote it by $d$.
\end{defn}
\begin{rem}
Observe that $d_{min,p,s}^{k-1}(dom(d_{min,p,s}^{k-1})) \subseteq dom(d_{min,s,q}^{k})$ and $d^2 := d_{min,s,q}^{k} \circ d_{min,p, s}^{k-1} = 0$ for each choice of $p,s,q$ and $k$.
\end{rem}
\begin{rem}
Because of the lower bound on the injectivity radius, every manifold of bounded geometry is a complete Riemannian manifold. As a consequence of this fact, $d_{min,p,q}$ is the unique closed extension of the exterior derivative for each choice of $p,q \in [1, + \infty)$. The proof is exactly the same of Proposition 3.1 of \cite{Bei2}, indeed the arguments used by the author and Theorem 12.5 of \cite{Gol} also hold if $q \neq p$.
\end{rem}
Consider $p,q$ in $[1, +\infty)$.
\begin{defn}
	The \textbf{$i$-th group of $L^{q,p}$-cohomology } is the group
	\begin{equation}
	H^i_{q,p}(M) := \frac{ker(d^i_{p,p})}{im(d^{i-1}_{q,p})}.
	\end{equation}
	The \textbf{$i$-th group of reduced $L^{q,p}$-cohomology } is the group
	\begin{equation}
	\overline{H}^i_{q,p}(M) := \frac{ker(d^i_{p,p})}{\overline{im(d^{i-1}_{q,p})}}.
	\end{equation}
	Finally, the \textbf{$i$-th group of $L^{q,p}$-quotient cohomology} is the group
	\begin{equation}
	T^i_{q,p}(M) := \frac{\overline{im(d^{i-1}_{q,p})}}{im(d^{i-1}_{q,p})}.
	\end{equation}
\end{defn}
\begin{rem}
The $i$-th group of $L^{q,p}$-quotient group is also called $i$-th group of $L^{q,p}$-torsion by Gol’dshtein and Kopylov in page 4 of \cite{Gol}.
\end{rem}
\begin{prop} \label{boundedness}
		Let $(N,h)$ and $(M,g)$ be two oriented Riemannian manifolds and let $p \in [1, +\infty)$. Fix $b \in \mathbb{Z}$ and consider $B: \Omega^*(N) \longrightarrow \Omega^{* + b}(M)$ and $K: \Omega^*(N) \longrightarrow \Omega^{* + b -1}(M)$ two $\mathcal{L}^*$-bounded operators. Let us suppose that $B(\Omega^*_c(N)) \subseteq \Omega^{* + b}_c(M)$ and
		\begin{equation}
		d B = \pm Bd + K
		\end{equation}
		over $\Omega^*_c(N)$. Then $B(dom(d_{min,q,p})) \subseteq dom(d_{min,q,p})$ and $d_{min,q,p}B = \pm Bd_{min} + K$ on the minimal domain of $d$.
	\end{prop}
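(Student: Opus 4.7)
The strategy is to exploit the definition of the minimal closure directly: take any $\alpha \in \mathrm{dom}(d_{min,q,p})$, fix an approximating sequence of compactly supported forms for $\alpha$, push it through $B$, and verify that the resulting sequence is itself an approximation witnessing $B\alpha \in \mathrm{dom}(d_{min,q,p})$ together with the asserted value of $d_{min,q,p} B\alpha$. This is a standard density/closure argument, so the content of the proof is bookkeeping with norms.

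Concretely, I would start with $\alpha \in \mathrm{dom}(d_{min,q,p}^{k-1})$ and unpack the definition to obtain a sequence $\{\alpha_n\}_n \subset \Omega^{k-1}_c(N)$ with $\alpha_n \to \alpha$ in $\mathcal{L}^q\Omega^{k-1}(N)$ and $d\alpha_n \to d_{min,q,p}^{k-1}\alpha$ in $\mathcal{L}^p\Omega^{k}(N)$. The natural candidate witnessing sequence for $B\alpha$ is $\{B\alpha_n\}_n$: by the assumption $B(\Omega^*_c(N)) \subseteq \Omega^{*+b}_c(M)$ these forms are compactly supported, hence admissible in the minimal-closure definition on $M$. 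Applying $\mathcal{L}^*$-boundedness of $B$ at exponent $q$ yields $B\alpha_n \to B\alpha$ in $\mathcal{L}^q\Omega^{*+b}(M)$, which settles one of the two convergences required.

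For the exterior derivative, I would use the algebraic identity $dB\alpha_n = \pm B d\alpha_n + K\alpha_n$, which holds on $\Omega^*_c(N)$ by hypothesis. The $\mathcal{L}^*$-boundedness of $B$ at exponent $p$ gives $Bd\alpha_n \to B d_{min,q,p}^{k-1}\alpha$ in $\mathcal{L}^p\Omega^{*+b+1}(M)$, since $\{d\alpha_n\}$ is $\mathcal{L}^p$-convergent; the $\mathcal{L}^*$-boundedness of $K$ applied to $\{\alpha_n\}$ handles the remainder $K\alpha_n$ in the corresponding topology. Summing the two limits, $dB\alpha_n$ converges to $\pm B d_{min,q,p}^{k-1}\alpha + K\alpha$, which, combined with the $\mathcal{L}^q$-convergence of $B\alpha_n$ to $B\alpha$, is exactly the definition of $B\alpha$ lying in $\mathrm{dom}(d_{min,q,p})$ with the claimed value of its derivative.

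The main technical point to be careful about is the matching of exponents: we need both $Bd\alpha_n$ and $K\alpha_n$ to converge in the same $\mathcal{L}^p$-space. For $Bd\alpha_n$ this is immediate because $\{d\alpha_n\}$ converges in $\mathcal{L}^p$; for $K\alpha_n$, however, only the $\mathcal{L}^q$-convergence of $\{\alpha_n\}$ is available a priori, so the $\mathcal{L}^*$-boundedness hypothesis on $K$ must be invoked at the exponent that makes the identity close up in $\mathcal{L}^p$. This compatibility between source and target exponents is the only subtle step; once it is in place, the conclusion $d_{min,q,p} B = \pm B d_{min} + K$ on $\mathrm{dom}(d_{min,q,p})$ is immediate by passage to the limit.
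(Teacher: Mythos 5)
Your proof follows exactly the paper's own argument: take the approximating sequence $\{\alpha_n\}$ witnessing $\alpha\in\mathrm{dom}(d_{min,q,p})$, use $\{B\alpha_n\}\subset\Omega^*_c(M)$ as the witnessing sequence for $B\alpha$, apply the identity $dB\alpha_n=\pm Bd\alpha_n+K\alpha_n$, and pass to the limit via the boundedness of $B$ and $K$. The extra paragraph in which you flag the exponent-matching issue for the term $K\alpha_n$ is a legitimate observation that the paper's proof passes over in silence (the paper simply writes $K\lim\alpha_n$ without specifying the topology); note, however, that ``$\mathcal{L}^*$-bounded'' in the paper's sense only gives boundedness $\mathcal{L}^s\to\mathcal{L}^s$ for each fixed $s$, not cross-exponent boundedness $\mathcal{L}^q\to\mathcal{L}^p$, so your phrase about invoking the hypothesis ``at the exponent that makes the identity close up in $\mathcal{L}^p$'' does not by itself resolve the issue when $q\neq p$. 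That subtlety is real, but it is left unresolved both in your proposal and in the paper's proof.
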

	\begin{proof}
		Let $\alpha$ be an element in $dom(d_{min, q,p})$. This means that there is a sequence $\{\alpha_n \}$ in $\Omega^*_c(N)$ such that $\alpha = \lim\limits_{n \rightarrow +\infty} \alpha_n$ and $d\alpha = \lim\limits_{n \rightarrow +\infty} d\alpha_n$. Since $B$ is continuous we obtain
		\begin{equation}
		B\alpha = \lim\limits_{n \rightarrow +\infty} B\alpha_n
		\end{equation}
		where $\{B\alpha_n\}$ is a sequence in $\Omega^*_c(M)$. Moreover the limit of $dB\alpha_n$ exists, indeed
		\begin{equation}
			\begin{split}
		\lim\limits_{n\rightarrow + \infty} dB\alpha_n &= \lim\limits_{n\rightarrow + \infty} \pm Bd\alpha_n + K\alpha_n\\
		&= \pm B\lim\limits_{n\rightarrow + \infty} d\alpha_n + K\lim\limits_{n\rightarrow + \infty}\alpha_n = \pm Bd\alpha + K\alpha.
			\end{split}
		\end{equation}
		So $d_{min,q,p}$ is well defined in $B\alpha$ and
		\begin{equation}
		d_{min,q,p}B \alpha = \pm Bd_{min,q,p}\alpha + K\alpha.
		\end{equation}
	\end{proof}
	\begin{rem}\label{altra}
		Consider an $\mathcal{L}^*$-bounded operator $A$ such that $A(dom(d_{min,N})) \subseteq dom(d_{min,M})$. Suppose that $Ad = dA$. Then $A$ induces a map in reduced $L^{p,q}$-cohomology and in $L^{q,p}$-quotient cohomology. Indeed, given $\alpha$ in $dom(d_N)$, we have 
		\begin{equation}
		\begin{split}
		&A(\alpha + \lim\limits_{k \rightarrow + \infty}d\beta_k) = A(\alpha) + A(\lim\limits_{k \rightarrow + \infty}d\beta_k) \\
		&=A(\alpha) + \lim\limits_{k \rightarrow + \infty} A(d \beta_k) =A(\alpha) + \lim\limits_{k \rightarrow + \infty} dA(\beta_k).
		\end{split}
		\end{equation}
		So, in reduced $L^{q,p}$-cohomology and in $L^{q,p}$-quotient cohomology, we obtain
		\begin{equation}
		[A(\alpha + \lim\limits_{k \rightarrow + \infty}d\beta_k)] = [A(\alpha)].
		\end{equation}
	\end{rem}
	\begin{cor}\label{boundedness2}
		Consider two operators $B$ and $K$ as in Proposition \ref{boundedness}.
		\\Then  $K$ induces the null operator in (un)-reduced $L^{q,p}$-cohomology and in $L^{q,p}$-quotient cohomology. \\Moreover, if $K = 0$ as operator between the $\mathcal{L}^p$-spaces, then  $dB = Bd$ on $\Omega_c^*(M)$. Then $B$ induces a map in (un)-reduced $L^{q,p}$-cohomology and in $L^{q,p}$-quotient cohomology.\footnote{We are considering $\overline{d} = d_{min}$.}
	\end{cor}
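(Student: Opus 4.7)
The plan is to first establish the chain-map nature of $K$ and then separately to show that the resulting maps vanish. Differentiating the defining identity $dB = \pm Bd + K$ on $\Omega^*_c(N)$ and using $d^2 = 0$ gives $0 = \pm(\pm Bd + K)d + dK$, that is, $dK = \mp Kd$. Applying Proposition \ref{boundedness} with $K$ playing the role of $B$ and with the new error term equal to $0$ then extends this to $K\bigl(dom(d_{min,q,p})\bigr) \subseteq dom(d_{min,q,p})$, with $dK = \mp Kd$ on the minimal domain. By Remark \ref{altra} this yields well-defined induced maps on reduced $L^{q,p}$-cohomology and on $L^{q,p}$-quotient cohomology; for the unreduced cohomology, the $\mathcal{L}^q$-boundedness of $K$ (an instance of $\mathcal{L}^*$-boundedness) lets one check directly via $K(d\gamma) = \mp d(K\gamma)$ that $K$ sends $im(d^{i-1}_{q,p})$ into $im(d^{i+b}_{q,p})$.

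To see that this induced map is zero, I fix a cycle $\alpha \in ker(d^i_{p,p})$ and approximate it by $\alpha_n \in \Omega^*_c(N)$ with $\alpha_n \to \alpha$ and $d\alpha_n \to 0$ in $\mathcal{L}^p$. Since $B\alpha_n \in \Omega^*_c(M) \subseteq dom(d^{i+b}_{q,p})$ trivially, each $dB\alpha_n$ already lies in $im(d^{i+b}_{q,p})$. The defining identity together with the $\mathcal{L}^p$-boundedness of $B$ and $K$ gives
\begin{equation*}
dB\alpha_n = \pm Bd\alpha_n + K\alpha_n \;\longrightarrow\; K\alpha \quad \text{in } \mathcal{L}^p,
\end{equation*}
so $K\alpha \in \overline{im(d^{i+b}_{q,p})}$, whence $[K\alpha] = 0$ in reduced cohomology. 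For the unreduced statement, Proposition \ref{boundedness} applied directly to $\alpha \in dom(d^i_{p,p})$ produces $B\alpha \in dom(d^{i+b}_{p,p})$ with $d(B\alpha) = K\alpha$, exhibiting $K\alpha$ as a genuine boundary in the minimal domain; the $L^{q,p}$-quotient cohomology case is handled analogously, starting from $\beta = \lim d\beta_n \in \overline{im(d^{i-1}_{q,p})}$ and using $Kd = dBd$ on $\Omega^*_c(N)$ (an immediate consequence of $K = dB \mp Bd$ and $d^2 = 0$).

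Finally, when $K \equiv 0$ on the $\mathcal{L}^p$-spaces, the relation $dB = \pm Bd$ holds on $\Omega^*_c(N)$ and Proposition \ref{boundedness} (with vanishing error term) extends it into a genuine chain-map identity on $dom(d_{min,q,p})$. Remark \ref{altra} then delivers the induced maps on reduced and quotient cohomology, while for the unreduced version one simply notes that $B(d\gamma) = \pm d(B\gamma) \in im(d^{i+b-1}_{q,p})$ for every $\gamma \in dom(d^{i-1}_{q,p})$, thanks to $B\gamma$ lying in $dom(d^{i+b-1}_{q,p})$ by Proposition \ref{boundedness}. The main subtlety I anticipate is the unreduced vanishing step when $q \neq p$: the natural primitive $B\alpha$ of $K\alpha$ is only controlled in $\mathcal{L}^p$, whereas $im(d^{i+b}_{q,p})$ formally calls for an $\mathcal{L}^q$-primitive, so matching the compactly supported approximations $B\alpha_n$ (which lie in every $\mathcal{L}^r$) against the target-space structure is where the argument requires the most care.
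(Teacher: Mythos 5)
The paper states this Corollary without a written proof, so there is no canonical argument to compare against; your route via the homotopy identity $K = dB \mp Bd$, Proposition \ref{boundedness}, and Remark \ref{altra} is the natural one. The derivation $dK = \mp Kd$, the reduced-case argument (approximating a cycle $\alpha$ by $\alpha_n \in \Omega^*_c(N)$ and noting $dB\alpha_n = \pm B d\alpha_n + K\alpha_n \to K\alpha$, so $K\alpha \in \overline{im(d_{q,p})}$), and the $K \equiv 0$ part are all correct.

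The unreduced and quotient cases, however, are not actually settled by what you wrote. Invoking Proposition \ref{boundedness} on $\alpha \in ker(d^i_{p,p})$ yields $B\alpha \in dom(d^{i+b}_{p,p})$ with $d(B\alpha) = K\alpha$, but the boundary subgroup of $H^{*}_{q,p}(M)$ is $im(d_{q,p})$, whose elements require an $\mathcal{L}^q$-primitive lying in $dom(d_{q,p})$; when $q \neq p$ you only control $B\alpha$ in $\mathcal{L}^p$, so "a genuine boundary in the minimal domain" is a boundary in the $(p,p)$-sense, not the $(q,p)$-sense. You flag exactly this in your final sentence, but the preceding sentence presents the unreduced vanishing as proved when it is not, and the quotient case has the identical obstruction: Remark \ref{altra} applied to the chain map $K$ gives $K\bigl(\overline{im(d_{q,p})}\bigr) \subseteq \overline{im(d_{q,p})}$, but nullity in $T^{*}_{q,p}(M)$ means landing in $im(d_{q,p})$, which again calls for an $\mathcal{L}^q$-primitive that $K\alpha = d B\alpha$ does not supply. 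So the gap you anticipate is genuine, affects both the unreduced and quotient statements, and needs to be resolved rather than only acknowledged.
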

\subsection{Fiber Volume and Radon-Nikodym-Lipschitz maps}
Let $(M, \nu)$ and $(N, \mu)$ be two measured spaces and let $f: (M, \nu) \longrightarrow (N, \mu)$ be a function such that the pushforward measure $f_\star(\nu)$ is absolutely continuous with respect to $\mu$.
\begin{defn}
	Let $(N, \mu)$ be $\sigma$-finite, then the \textbf{Fiber Volume} is the Radon-Nikodym derivative
	\begin{equation}
	Vol_{f, \nu, \mu} := \frac{\partial f_\star \nu}{\partial \mu}.
	\end{equation}
\end{defn}
Consider $(M, d_M, \mu_M)$ and $(N, d_N, \mu_N)$ two measured and metric spaces. 
\begin{defn}
	A map $f: (M, d_M, \mu_M) \longrightarrow (N, d_N, \mu_N)$ is \textbf{Radon-Nikodym-Lipschitz} or \textbf{R.-N.-Lipschitz} if
	\begin{itemize}
		\item $f$ is Lipschitz
		\item $f$ has a well-defined and bounded Fiber Volume.
	\end{itemize}
\end{defn}
\begin{rem}
	Consider $f: (M, d_M, \nu_M) \longrightarrow (N, d_N, \mu_N)$ an R.-N.-Lipschitz map and let $C$ be the supremum of $\frac{\partial f_{\star} \mu_M}{\partial \mu_N}$. Then for all measurable set $A \subseteq N$, 
	\begin{equation}\label{follow}
	\mu_M(f^{-1}(A)) = \int_A \frac{\partial f_{\star} \mu_M}{\partial \mu_N} d\mu_N \leq C \int_A d\mu_N = C \mu_N(A).
	\end{equation}
The vice-versa also holds: if $f$ satisfies (\ref{follow}), then it is a R.N.-Lipschitz map. This implication can be proved using a \textit{reductio ad absurdum} argument.
	\end{rem}
	\begin{rem}\label{compo2}
	Composition of R.-N.-Lipschitz maps is a R.-N.-Lipschitz map.
	\end{rem}
\begin{lem}
	Let $f:(M,g) \longrightarrow (N,h)$ be a Lipschitz map between Riemannian manifolds. Let $x$ be a point on $M$ such that $f$ is differentiable on $x$. Then there is a number $C$, which does not depend on $x$, such that for all $\alpha_{f(x)}$ in $T^*_{f(x)}(N)$ and for all $k \in [1, +\infty)$ 
	\begin{equation}
	\vert f^*\alpha_{f(x)} \vert^k_{x} \leq C^k \cdot \vert\alpha_{f(x)} \vert^k_{f(x)},
	\end{equation}
	where $\vert \cdot \vert_{x}$ and $\vert \cdot \vert_{f(x)}$ are the norms induced by the metrics $g$ and $h$ on $\Lambda^*_xM$ and $\Lambda^*_{f(x)}N$.
\end{lem}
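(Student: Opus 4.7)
The plan is to factor the estimate through a pointwise bound on the differential of $f$. First I would observe that since $f$ is globally Lipschitz with some constant $L$, at any point $x$ of differentiability the tangent map $df_x \colon T_xM \to T_{f(x)}N$ has operator norm bounded by $L$, uniformly in $x$. The justification is standard: for any unit vector $v \in T_xM$, one has
\[
|df_x(v)|_{f(x)} = \lim_{t \to 0^+} \frac{d_N(f(\exp_x tv), f(x))}{t} \leq L,
\]
since the Riemannian distance on $N$ agrees to first order with the norm on $T_{f(x)}N$ and the Lipschitz condition bounds the numerator by $Lt$.

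Next I would use duality. By definition of the pullback of a $1$-form, $(f^*\alpha_{f(x)})(v) = \alpha_{f(x)}(df_x(v))$, and hence by Cauchy--Schwarz
\[
|(f^*\alpha_{f(x)})(v)| \leq |\alpha_{f(x)}|_{f(x)} \cdot |df_x(v)|_{f(x)} \leq L \cdot |\alpha_{f(x)}|_{f(x)} \cdot |v|_x.
\]
Taking the supremum over unit $v \in T_xM$ yields $|f^*\alpha_{f(x)}|_x \leq L \cdot |\alpha_{f(x)}|_{f(x)}$. Since the $k$-th power is monotone on nonnegative reals for $k \in [1, +\infty)$, raising both sides to the $k$-th power produces the stated inequality with $C := L$, a constant that depends only on the Lipschitz constant of $f$ and not on the point $x$.

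I do not expect a serious obstacle. The argument reduces to the infinitesimal form of the Lipschitz condition together with naturality of the pullback; the only point that warrants a moment of care is the first-order comparison between the geodesic distance on $N$ and the linear norm on $T_{f(x)}N$, which is a standard fact in Riemannian geometry. If later sections require the analogous bound on $k$-forms $\omega \in \Lambda^k T^*_{f(x)}N$, the same $C = L$ works upon using the multiplicative behavior of the pullback on wedge products, giving $|f^*\omega|_x \leq L^k |\omega|_{f(x)}$.
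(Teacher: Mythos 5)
Your proof is correct and follows essentially the same route as the paper's (one-line) argument: both rest on the uniform bound $\|df_x\| \le L$ coming from the Lipschitz hypothesis, the identification of the norm of $f^*$ with an operator norm governed by $\|df_x\|$, and then raising to the $k$-th power. Your version simply spells out the duality step and the infinitesimal Lipschitz estimate that the paper leaves implicit, and your closing remark about $k$-forms ($|f^*\omega|_x \le L^{\deg\omega}|\omega|_{f(x)}$) matches how the lemma is actually used in the subsequent proposition.
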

\begin{proof}
It follows since the norms on $\Lambda^*_pN$ and $\Lambda^*_qM$ can be seen as operatorial norms and since the $df_q$ has bounded norm.  
\end{proof}
\begin{rem}
By the Rademacher Theorem, we know that a Lipschitz map is differentiable almost everywhere and so the previous lemma holds for almost all $x$ in $M$.
\end{rem}
\begin{prop}
	Let $(M,g)$ and $(N,h)$ be Riemannian manifolds. Let $f:(M,g) \longrightarrow (N,h)$ be a R.-N.-Lipschitz map. Then $f$ induces an $\mathcal{L}^*$-bounded pullback. This means that if $f$ is a R.N.-Lipschitz map, then $f^*$ is an $\mathcal{L}^*$-bounded operator.
\end{prop}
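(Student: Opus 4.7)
The plan is to estimate the $\mathcal{L}^p$-norm of $f^*\alpha$ for a compactly supported form $\alpha \in \Omega^k_c(N)$, and then extend by density to $L^p\Omega^k(N)$. The argument proceeds in three stages: a pointwise bound on the pullback of forms, a change-of-variables from $M$ to $N$ using the pushforward measure, and finally the boundedness of the Radon--Nikodym derivative.

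First, I would fix $p \in [1, +\infty)$ and $\alpha \in \Omega^k_c(N)$, and write
\begin{equation}
\|f^*\alpha\|_p^p = \int_M |f^*\alpha|^p_x \, d\mu_M(x).
\end{equation}
By the Rademacher theorem, the Lipschitz map $f$ is differentiable at $\mu_M$-almost every $x \in M$, and so $f^*\alpha$ is defined almost everywhere and is a measurable section. At each such point of differentiability, the preceding lemma gives a constant $C$ (depending only on the Lipschitz constant of $f$) with
\begin{equation}
|f^*\alpha|_x^p \leq C^p \cdot |\alpha|_{f(x)}^p.
\end{equation}
Integrating over $M$ yields
\begin{equation}
\|f^*\alpha\|_p^p \leq C^p \int_M |\alpha|^p_{f(x)} \, d\mu_M(x).
\end{equation}

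Next, I would apply the definition of the pushforward: since the function $x \mapsto |\alpha|^p_{f(x)}$ is (the $f$-composite of) a non-negative measurable function on $N$, we have
\begin{equation}
\int_M |\alpha|^p_{f(x)} \, d\mu_M(x) = \int_N |\alpha|^p_y \, d(f_\star \mu_M)(y).
\end{equation}
Since $f$ is R.-N.-Lipschitz, the pushforward $f_\star \mu_M$ is absolutely continuous with respect to $\mu_N$ and its Radon--Nikodym derivative $\mathrm{Vol}_{f,\mu_M,\mu_N}$ is bounded; call this bound $C'$. Therefore
\begin{equation}
\int_N |\alpha|^p_y \, d(f_\star \mu_M)(y) = \int_N |\alpha|^p_y \, \frac{\partial f_\star \mu_M}{\partial \mu_N}(y)\, d\mu_N(y) \leq C' \|\alpha\|_p^p.
\end{equation}
Combining gives $\|f^*\alpha\|_p \leq (C^p C')^{1/p} \|\alpha\|_p$, which is the desired $\mathcal{L}^p$-boundedness on $\Omega^*_c(N)$ for every $p \in [1,+\infty)$. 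Because $\Omega^*_c(N)$ is dense in $L^p\Omega^*(N)$, $f^*$ extends uniquely to a bounded operator on each $L^p\Omega^k(N)$.

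The only real subtlety is ensuring that the pointwise and measure-theoretic manipulations are legitimate despite $f$ being only Lipschitz: this is handled by Rademacher's theorem (which ensures $f^*\alpha$ is defined and measurable a.e.) together with the standard change-of-variables formula for pushforward measures applied to non-negative measurable integrands. No compactness or smoothness of $f$ is needed beyond what R.-N.-Lipschitz already provides, so the main work is really just verifying that these three ingredients fit together cleanly.
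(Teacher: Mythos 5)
Your proof is correct and follows essentially the same route as the paper: a pointwise bound on $|f^*\alpha|$ from the preceding lemma (with Rademacher handling measurability), the change of variables $\int_M (g\circ f)\, d\mu_M = \int_N g\, d(f_\star\mu_M)$, and the boundedness of the Radon--Nikodym derivative $\mathrm{Vol}_{f,\mu_M,\mu_N}$. The only additions are the explicit mention of Rademacher's theorem (which the paper notes in a remark just before the proposition) and the density extension step, both of which the paper leaves implicit.
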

\begin{proof}
	Let $\omega$ be a smooth form with compact support in $\mathcal{L}^p(N)$ and let $K_f := \max\{1, C_f^n\}$, where $n = dim(N)$. Then
	\begin{equation}
	\begin{split}
	\vert\vert f^*\omega\vert\vert_{p}^p &= \int_M \vert f^*\omega \vert^p d\mu_M 
	\leq \int_M K_f^p f^*(\vert \omega \vert^p) d\mu_M \\
	&= K_f^p \int_N \vert \omega \vert^p d(f_\star \mu_M)
	= K_f^p \int_N \vert \omega \vert^p Vol_{f,\mu_M,\mu_N} d\mu_N\\
	&\leq K_f^p C_{Vol}  \int_N \vert \omega \vert^p d\mu_N
	=  K_f^p C_{Vol} \vert\vert\omega\vert\vert_{p}^p.
	\end{split}
	\end{equation}
\end{proof}
In the next sections we will focus on submersions. In particular, we will study their Fiber Volumes. In order to do this we need the notion of quotient of differential forms.
	\begin{defn}
		Let us consider a differentiable manifold $M$. Given two differential forms $\alpha \in \Omega^k(M)$, $\beta \in \Omega^n(M)$ we define \textbf{a quotient between $\alpha$ and $\beta$}, denoted by $\frac{\alpha}{\beta}$, as a (possibly not continuous) section of $\Lambda^{k-n}(M)$ such that for all $p$ in $M$
		\begin{equation}
		\alpha(p) =  \beta(p) \wedge \frac{\alpha}{\beta}(p).\label{quotient}.
		\end{equation}
	\end{defn}
Consider two oriented differentiable manifolds $X$ and $Y$. Given a submersion $f:X \longrightarrow Y$, let us denote by $F_q$ the fiber of $f$ in $q$ and consider $i_q: F_q \longrightarrow X$ the immersion of the fiber in $X$. Then $i_q^*(\frac{\beta}{f^*\alpha})$ does not depend on the choice of the quotient. This is proved in Proposition 16.21.7 of the book of Dieudonn\'e \cite{Dieu}. Moreover in \cite{Dieu} it is also proved that if $\beta$ is a smooth form in $X$, then $i_q^*(\frac{\beta}{f^*\alpha})$ is a smooth form on the fiber $F_q$.
	This means that for all $p$ in $Y$ we obtain an orientation of $F_p$ defined setting
	\begin{equation}
	\int_{F_p} i_p^*\frac{Vol_X}{f^*Vol_Y} > 0.
	\end{equation}
Observe that if $f:(X,g) \longrightarrow (Y,h)$ is a submersion between Riemannian manifolds, then it is possible to define a \textit{locally smooth} quotient between $Vol_X$ and $f^*Vol_Y$. Indeed it is sufficient to consider local fibered coordinates $\{x^i, y^j\}$ on $X$ and $\{y^j\}$ in $Y$ and we obtain that locally
\begin{equation}
	\frac{Vol_X}{f^*Vol_Y} = \frac{det(g_{ij}(y,x))}{det(h_{rs})(y)}dx^1 \wedge ... \wedge dx^n,
\end{equation}
where $g_{ij}$ and $h_{rs}$ are the matrix related to the metrics $g$ and $h$. As a consequence of this fact the Projection Formula holds also for quotients of volume forms, i.e. given a differential form $\alpha$ in $Y$
\begin{equation}
\int_F f^*\alpha \wedge \frac{Vol_X}{f^*Vol_Y} = \alpha \wedge \int_F \frac{Vol_X}{f^*Vol_Y}.
\end{equation}
In order to prove this we have to consider a cover of coordinate charts and to decompose the integral using a partition of unity. Remember that, as a consequence of Proposition 16.21.7 of \cite{Dieu}, the integral over the fiber in $p$ of a quotient between a form $\alpha$ and a form $f^*\beta$ does not depend on the choice of the quotient. This means that in each chart we can choose a smooth quotient and apply the usual Projection Formula.
\subsection{Fiber Volume of a submersion}
In this section we will study the Fiber Volumes of Lipschitz submersions between orientable manifolds. 
\begin{prop}\label{cosa}
	Let $(M,g)$ and $(N,h)$ two oriented, Riemannian manifolds possibly with boundary. Let $\pi: (M,g) \longrightarrow (N,h)$ be a submersion. Then
	\begin{equation}
	Vol_{\pi,\mu_M,\mu_N}(q) = \int_F \frac{Vol_M}{\pi^*Vol_N}(q).
	\end{equation}
\end{prop}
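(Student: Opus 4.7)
The plan is to verify the equality by showing that the integral of $\int_F \frac{Vol_M}{\pi^*Vol_N}(q)$ against $d\mu_N$ over any measurable set $A \subseteq N$ reproduces $\mu_M(\pi^{-1}(A))$; by the uniqueness of the Radon--Nikodym derivative, this identifies the right-hand side with $Vol_{\pi,\mu_M,\mu_N}$.

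First, I would fix a countable locally finite open cover $\{U_\alpha\}$ of $M$ by fibered coordinate charts of the submersion, i.e.\ each $U_\alpha$ comes with coordinates $\{x^i,y^j\}$ such that $\pi$ reads $(x,y)\mapsto y$ in local coordinates $\{y^j\}$ on $\pi(U_\alpha)$, together with a subordinate partition of unity $\{\rho_\alpha\}$. In such a chart, as already observed just above the statement, one has the locally smooth representative
\begin{equation}
\frac{Vol_M}{\pi^*Vol_N} = \frac{\det(g_{AB}(x,y))}{\det(h_{rs}(y))}\, dx^1 \wedge \cdots \wedge dx^n,
\end{equation}
so that $Vol_M = \pi^*Vol_N \wedge \frac{Vol_M}{\pi^*Vol_N}$ pointwise on the chart.

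Next, I would apply the classical Fubini theorem in the chart: for any measurable $A \subseteq N$,
\begin{equation}
\int_{\pi^{-1}(A)\cap U_\alpha} \rho_\alpha\, Vol_M = \int_{A} \left(\int_{F_y\cap U_\alpha} \rho_\alpha\, i_y^*\frac{Vol_M}{\pi^*Vol_N}\right) d\mu_N(y),
\end{equation}
where $i_y:F_y \hookrightarrow M$ is the fiber inclusion. Summing over $\alpha$ (using local finiteness and the monotone convergence theorem, or equivalently summing absolutely since the partition of unity is nonnegative) gives
\begin{equation}
\mu_M(\pi^{-1}(A)) = \int_{\pi^{-1}(A)} Vol_M = \int_{A} \left(\int_{F_y} \frac{Vol_M}{\pi^*Vol_N}\right) d\mu_N(y).
\end{equation}
Here I use that, by Proposition 16.21.7 of Dieudonn\'e cited in the excerpt, the integrand $\int_{F_y} i_y^*(Vol_M/\pi^*Vol_N)$ is independent of the choice of quotient, so the local smooth representatives used in the Fubini step patch together to a well-defined function of $y\in N$.

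Finally, since $A \subseteq N$ is arbitrary, the displayed identity says that $y \mapsto \int_{F_y}\frac{Vol_M}{\pi^*Vol_N}$ is a Radon--Nikodym derivative of $\pi_\star \mu_M$ with respect to $\mu_N$, and by uniqueness (a.e.) of the Radon--Nikodym derivative this function equals $Vol_{\pi,\mu_M,\mu_N}$, giving the claim. The only delicate point is justifying the use of Fubini and the patching of the local computations; the assumption that $\pi$ is a genuine submersion guarantees the existence of fibered coordinates, and the Dieudonn\'e result ensures the intrinsic meaning of the integrand on each fiber, so this step is routine rather than a real obstacle.
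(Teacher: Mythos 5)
Your proof is correct and follows essentially the same route as the paper's: both verify that $\int_F \frac{Vol_M}{\pi^*Vol_N}$ satisfies the defining property of the Radon--Nikodym derivative of $\pi_\star\mu_M$ with respect to $\mu_N$, by rewriting $\mu_M(\pi^{-1}(A))$ as an iterated integral over fibers and base. The paper compresses the central step by invoking the projection formula for quotients of volume forms (justified in the remark immediately preceding the proposition via covers and partitions of unity), whereas you unpack that step directly into the chart-by-chart Fubini-plus-partition-of-unity computation on which that formula rests; this is the same mechanism, just written out in full detail.
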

\begin{proof}
	Let $A$ a measurable set of $N$. Then
	\begin{equation}
	\begin{split}
	f_\star \mu_M (A) &= \int_{\pi^{-1}(A)} 1 d\mu_{M} =\int_{\pi^{-1}(A)} Vol_{M} \\
	&= \int_{\pi^{-1}(A)} \pi^*(Vol_N)\wedge \frac{Vol_M}{\pi^*(Vol_N)} \\
	&= \int_{\pi^{-1}(A \cap \pi(M))} \pi^*(Vol_N)\wedge \frac{Vol_M}{\pi^*(Vol_N)} \\
	&= \int_{A \cap \pi(M)} Vol_N (\int_F \frac{Vol_M}{\pi^*(Vol_N)})  \\
	&= \int_{A \cap \pi(M)} (\int_F \frac{Vol_M}{\pi^*(Vol_N)}) Vol_N \\
	&= \int_{A \cap \pi(M)} (\int_F \frac{Vol_M}{\pi^*(Vol_N)}) d\mu_N.
	\end{split}
	\end{equation}
\end{proof}
\begin{rem}\label{oss}
	If the submersion $f: X \rightarrow Y$ is a diffeomorphism between oriented manifold which preserves the orientations, then the integration along the fibers of $f$ is the pullback $(f^{-1})^*$. This means that the Fiber Volume of $f$ is given by $\vert(f^{-1})^*\frac{Vol_X}{f^*(Vol_Y)}\vert$.
\end{rem}
We conclude this section by giving a formula which allow us to compute the Fiber Volume of the composition of two submersions.
\begin{prop}\label{compo}
	Let $f:(M,g) \longrightarrow (N,h)$ and $g:(N,h) \longrightarrow (W, l)$ be two submersions between oriented Riemannian manifolds. Then
	\begin{equation}
	Vol_{g \circ f, \mu_M, \mu_W}(q) = \int_{g^{-1}(q)}(\int_{f^{-1}g^{-1}(q)} \frac{Vol_M}{f^*(Vol_N)})\frac{Vol_N}{g^*Vol_N}
	\end{equation}
\end{prop}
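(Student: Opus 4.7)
The plan is to reduce the statement to Proposition \ref{cosa} applied to the composite submersion $g\circ f$, and then to decompose the quotient of volume forms so that fibered integration (via the Projection Formula recalled at the end of the preceding subsection) gives the iterated integral on the right.

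First I would write
\begin{equation}
Vol_{g\circ f,\mu_M,\mu_W}(q)=\int_{(g\circ f)^{-1}(q)}\frac{Vol_M}{(g\circ f)^*Vol_W},
\end{equation}
using Proposition \ref{cosa} applied to the submersion $g\circ f:M\to W$. Note that $(g\circ f)^{-1}(q)=f^{-1}(g^{-1}(q))$; set $F_q:=g^{-1}(q)\subset N$ and $E_q:=f^{-1}(F_q)\subset M$. Since $g$ is a submersion, $F_q$ is a submanifold of $N$; since $f$ is a submersion, the restriction $f_q:=f|_{E_q}:E_q\to F_q$ is again a submersion, with fibers $f^{-1}(p)$ for $p\in F_q$.

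Next I would decompose the quotient of volume forms. By the very defining relation \eqref{quotient},
\begin{equation}
Vol_M \;=\; f^{*}Vol_N\wedge\frac{Vol_M}{f^{*}Vol_N}\;=\; f^{*}\!\Bigl(g^{*}Vol_W\wedge\frac{Vol_N}{g^{*}Vol_W}\Bigr)\wedge\frac{Vol_M}{f^{*}Vol_N},
\end{equation}
so one legitimate choice of quotient is
\begin{equation}
\frac{Vol_M}{(g\circ f)^{*}Vol_W}\;=\;f^{*}\!\Bigl(\frac{Vol_N}{g^{*}Vol_W}\Bigr)\wedge\frac{Vol_M}{f^{*}Vol_N}.
\end{equation}
By Proposition 16.21.7 of \cite{Dieu}, the pullback of this quotient to the fiber $E_q$ does not depend on the choice, so we may use this expression when integrating.

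Then I apply the Projection Formula for the submersion $f_q:E_q\to F_q$, as recalled at the end of the previous subsection: for any form $\alpha$ on $F_q$,
\begin{equation}
\int_{E_q} f_q^{*}\alpha\wedge\frac{Vol_{E_q}}{f_q^{*}Vol_{F_q}}\;=\;\int_{F_q}\alpha\,\int_{f_q^{-1}(\cdot)}\frac{Vol_{E_q}}{f_q^{*}Vol_{F_q}}.
\end{equation}
Taking $\alpha=i^{*}\!\bigl(\tfrac{Vol_N}{g^{*}Vol_W}\bigr)$ (where $i:F_q\hookrightarrow N$), and noting that the quotient $\tfrac{Vol_M}{f^{*}Vol_N}$ restricts along fibers of $f$ to the same thing as $\tfrac{Vol_{E_q}}{f_q^{*}Vol_{F_q}}$ restricts along fibers of $f_q$ (a pointwise check in fibered normal coordinates $\{x^i,y^j,z^k\}$ adapted to $g\circ f$), I obtain
\begin{equation}
\int_{E_q}\frac{Vol_M}{(g\circ f)^{*}Vol_W}\;=\;\int_{F_q}\!\Bigl(\int_{f^{-1}(p)}\frac{Vol_M}{f^{*}Vol_N}\Bigr)\frac{Vol_N}{g^{*}Vol_W}(p),
\end{equation}
which is the claimed formula (the expression $g^{*}Vol_N$ in the displayed statement being a typo for $g^{*}Vol_W$).

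The main obstacle I expect is the bookkeeping in the third step: justifying that the non-unique quotient $\tfrac{Vol_M}{f^{*}Vol_N}$, once restricted to the fibers $f^{-1}(p)\subset E_q$, is actually the quotient $\tfrac{Vol_{E_q}}{f_q^{*}Vol_{F_q}}$ that is needed to trigger the Projection Formula. This is an orientation and dimension check performed in adapted coordinates, and does not use anything beyond what is stated in the paragraph following Proposition \ref{cosa}; everything else is a direct application of Proposition \ref{cosa}, the factorization of quotients via \eqref{quotient}, and the Projection Formula recalled above.
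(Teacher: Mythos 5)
Your proposal is correct and takes essentially the same route as the paper: express $Vol_{g\circ f}$ via Proposition \ref{cosa}, factor the quotient $\frac{Vol_M}{(g\circ f)^*Vol_W}$ as a wedge of $\frac{Vol_M}{f^*Vol_N}$ with a pullback $f^*\bigl(\frac{Vol_N}{g^*Vol_W}\bigr)$, and then apply the Projection Formula to turn the integral over $(g\circ f)^{-1}(q)$ into an iterated integral. The only differences are cosmetic: you write the wedge factors in the opposite order and are somewhat more explicit about restricting to the induced submersion $f_q:E_q\to F_q$, while the paper applies the Projection Formula directly in one line. You are also right that the $g^*Vol_N$ in the displayed statement is a typo for $g^*Vol_W$.
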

\begin{proof}
	Observe that, as quotients,
	\begin{equation}
	\frac{Vol_M}{(g \circ f)^*Vol_W} = \frac{Vol_M}{f^*Vol_N} \wedge \frac{f^*Vol_N}{(g \circ f)^*Vol_W}
	\end{equation}
	and, in particular, we can choose as quotient
	\begin{equation}
	\frac{f^*Vol_N}{(g \circ f)^*Vol_W} = f^*(\frac{Vol_N}{g^*Vol_W}).
	\end{equation}
	Then we conclude by applying the Projection Formula:
	\begin{equation}
	\begin{split}
	&Vol_{g \circ f, \mu_M, \mu_W}(q) = \int_{(g \circ f)^{-1}(q)} \frac{Vol_M}{(g \circ f)^*Vol_W}\\
	&= \int_{(g \circ f)^{-1}(q)} \frac{Vol_M}{f^*Vol_N} \wedge f^*(\frac{Vol_N}{g^*Vol_W}) = \int_{g^{-1}(q)}(\int_{f^{-1}g^{-1}(q)} \frac{Vol_M}{f^*(Vol_N)})\frac{Vol_N}{g^*Vol_N}.
	\end{split}
	\end{equation}
\end{proof}
	\section{Sasaki metric and submersions}
	\subsection{The Sasaki metric}\label{sasaki}
	The definition of \textit{Sasaki metric} on a vector bundle given in this paper is a generalization of the metric defined by Sasaki in \cite{SSK} for the tangent bundle of a Riemannian manifolds. The definition of this generalized Sasaki metric can be found in page 2 of the paper of Boucetta and Essoufi \cite{Boucetta}.
	\\Let us consider a Riemannian manifold $(N,h)$ of dimension $n$, $\pi_E : E \longrightarrow N$ a vector bundle of rank $m$ endowed with a bundle metric $H_E \in \Gamma(E^*\otimes E^*)$ and a linear connection $\nabla_E$ which preserves $H_E$. Fix $\{s_\alpha \}$ a local frame of $E$: if $\{x^i\}$ is a system of local coordinates over $U \subseteq N$, then we can define the system of coordinates $\{x^i, \mu^\alpha \}$ on $\pi_E^{-1}(U)$, where the $\mu^\alpha $ are the components with respect to $\{s_\alpha \}$.
	\\Let us denote by $K$ the map $K: TE \longrightarrow E$ defined as
	\begin{equation}
	K(b^i\frac{\partial}{\partial x^i }\vert_{(x_0, \mu_0)} + z^\alpha \frac{\partial}{\partial \mu^\alpha }\vert_{(x_0, \mu_0)}) := (z^\alpha + b^i \mu^j \Gamma_{ij}^\alpha (x_0))s_\alpha (x_0),
	\end{equation}
	where the $\Gamma_{ij}^l$ are the Christoffel symbols of $\nabla_E$. The Christoffel symbols $\Gamma^\gamma_{\eta j}(x)$ are defined by the formula
		\begin{equation}
		 \nabla^E_{\frac{\partial}{\partial x^j }} s_{\eta}(x) := \Gamma^\gamma_{\eta j}(x) s_\gamma(x).
		\end{equation}
	\begin{defn}
		The \textbf{Sasaki metric} on $E$ is the Riemannian metric $h^E$ defined for all $A,B$ in $T_{(p, v_p)}E$ as
		\begin{equation}
		h^E(A,B) := h(d\pi_{E,v_p}(A), d\pi_{E,v_p}(B)) + H_E(K(A), K(B)).
		\end{equation}
	\end{defn}
	\begin{rem}\label{riemsub}
		Let us consider the system of coordinates $\{x^i\}$ on $N$ and $\{x^i, \mu^j\}$ on $E$. The components of $h^E$ are given by
		\begin{equation}\label{metri}
		\begin{cases}
		h^E_{ij}(x,\mu) = h_{ij}(x) + H_{\alpha\gamma}(x)\Gamma^\alpha_{\beta i}(x)\Gamma^\gamma_{\eta j}(x)\mu^\beta\mu^\eta \\
		h^E_{i\sigma}(x, \mu) = H_{\sigma \alpha}(x)\Gamma^\alpha_{\beta i}(x)\mu^\beta\\
		h^E_{\sigma\tau}(x,\mu) = H_{\sigma, \tau}(x), 
		\end{cases}
		\end{equation}
		where $i,j = 1,..., n$ and $\sigma, \tau = n+1,...,n+m$.
		Consider a point $x_0 = (x^1_0, ..., x^n_0)$ in $N$. If all the Christoffel symbols of $\nabla_E$ in $x_0$ are zero, then, in local coordinates, the matrix of $h^E$ in a point $(x_0, \mu)$ is
		\begin{equation}\label{metrinulla}
		\begin{bmatrix}
		h_{i,j}(x_0) && 0 \\
		0 && H_{\sigma, \tau}(x_0)
		\end{bmatrix}.
		\end{equation}
Moreover, with respect to the coordinates $(x^i, \mu^\sigma)$, the matrix $h_E := (h^E)^{-1}$ is given by
\begin{equation}\label{invmetri}
    \begin{cases}
        h^{ij}_E(x,\mu) =  h^{ij}(x) \\
        h^{i\sigma}_E = -\Gamma^\sigma_{\beta j}(x) h^{ij}(x)\mu^\beta \\
        h_E^{\sigma \tau} = H^{\sigma \tau}(x) +  h^{ij}(x) \Gamma^\sigma_{\beta i}(x)\Gamma^\tau_{\eta j}(x)\mu^\beta\mu^\eta
    \end{cases}
\end{equation}
where $H^{\sigma \tau}$ and $ h^{ij}(x)$ are the components of the inverse matrices of $h_{ij}$ and $H_{\sigma, \tau}$.
	\end{rem}
	\begin{exem}\label{twos}
		Let $(M,g)$ a Riemannian manifold. Consider as $E$ the tangent bundle $TM$ and as $h_E$ the metric $g$ itself. Choose the connection $\nabla_E$ as the Levi-Civita connection $\nabla_g^{LC}$. We denote by $g_S$ the Sasaki metric induced by $g$ and $\nabla^{LC}_g$.
	\end{exem}
	\begin{exem}\label{threes}
	Consider a smooth map $f:(M,g) \longrightarrow (N,h)$. Let $\pi: f^*TN \longrightarrow M$ be the pullback bundle. Then the Riemannian metric $h$ can be seen as a bundle metric on $TN$ and so we obtain a bundle metric $f^*h$ on $f^*TN$. Fix the connection $f^*\nabla^{LC}_h$ on $f^*TN$ which is the pullback of the Levi-Civita connection on $(N,g)$. Let us denote by $g_{S,f}$ the Sasaki metric induced by $f^*\nabla^{LC}_h$, $f^*h$ and $g$.
	\end{exem}
	\begin{rem}
	 Then the Christoffel symbols of the pullback connection $f^*\nabla^E$ with respect to the pullback frame $\{f^* e_i\}$ and to the coordinates $\{x^i\}$ are given by 
	\begin{equation}
	 \tilde{\Gamma}^\alpha_{\beta, i}:= \frac{\partial f^l}{\partial x^i}f^*(\Gamma^\alpha_{\beta, l}).
	\end{equation}
	A first consequence of this fact is that the map $ID: (id^*TN, g_{S, id}) \longrightarrow (TN, g_S)$ which sends $(p, w_{p})$ to $w_p$ is an isometry. So, from now on, we will identify $(id^*TN, g_{S, id})$ and $(TN, g_S)$.
	\end{rem}
	\begin{rem}\label{respect}
	Let $f: (M,g) \longrightarrow (N,h)$ be a smooth map. Let us fix a chart $\{U, x^i\}$ on $M$ and a chart $\{V, y^j\}$ on $N$ such that $f(U) \subseteq V$. Fix a bundle $E$ on $N$ and let $\nabla^E$ be a connection. Let us denote by $\Gamma^\alpha_{\beta l}$ the Christoffel symbols of $\nabla^E$ with respect to a frame $\{e_i\}$ and the coordinates $\{y^j\}$. If $f: (M,g) \longrightarrow (N,h)$ is a smooth Lipschitz map, then also the induced bundle map 
	\begin{equation}
	    \begin{split}
	        F:(f^*T^\delta N, g_{S,f}) &\longrightarrow (T^\delta N, g_S) \\
	        (p, w_{f(p)}) &\longrightarrow w_{f(p)}
	    \end{split}
	\end{equation}
	is a smooth Lipschitz map.
	\end{rem}
	\begin{prop}\label{geodesis}
	 Consider a vector bundle $(E, \pi, M)$ over a Riemannian manifold $(M,g)$. Fix on $E$ a bundle metric $h$, a connection $\nabla^E$ and let us denote by $h_E$ the Sasaki metric induced by $g$, $h$ and $\nabla^E$. Let us suppose that for each point $p$ on $M$ there is a system of normal coordinates $\{x^i\}$ around $p$ and a local frame $\{s_\sigma\}$ such that the Christoffel symbols of $\nabla^E$ vanishes at $x= 0$. Moreover let us suppose that $\frac{\partial h_{\sigma \tau}}{\partial x^k}(0) = 0$, where $h_{\sigma \tau}$ are the components of the Gram matrix of $h$ with respect to the coordinates $\{x^i\}$. Then the fibers of $\pi$ are totally geodesic submanifolds (in our case this means that the straight lines on the fibers are geodesics) and $\pi$ is a Riemannian submersion.
	\end{prop}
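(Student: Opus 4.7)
My plan is to work entirely in the preferred local coordinates $(x^i, \mu^\sigma)$ supplied by the hypothesis and to rely on the explicit formulas (\ref{metri}) and (\ref{invmetri}) for the Gram matrix of $h_E$ and its inverse. Since the hypothesis applies at every base point, it suffices to verify both conclusions at an arbitrary $(p, \mu_0) \in E$ after choosing such normal coordinates around $p$, together with a local frame in which $\Gamma^\alpha_{\beta i}(0) = 0$ and $(\partial H_{\sigma\tau}/\partial x^k)(0) = 0$. At $x = 0$, formula (\ref{metrinulla}) places $h^E$ in block-diagonal form, and (\ref{invmetri}) does the same for the inverse $h_E^{-1}$.

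For the Riemannian submersion property, I would exploit the intrinsic formula $h_E(A,B) = h(d\pi A, d\pi B) + H_E(K(A), K(B))$: a vector $X = b^i \partial/\partial x^i + z^\sigma \partial/\partial \mu^\sigma$ lies in the horizontal distribution (the $h_E$-orthogonal complement of the vertical one) iff $K(X) = 0$, i.e.\ $z^\sigma + b^i \mu^\beta \Gamma^\sigma_{\beta i} = 0$. At $x = 0$ the connection coefficients vanish, so the horizontal subspace at $(0, \mu_0)$ is spanned by $\{\partial/\partial x^i\}$; the block-diagonal form of $h_E$ at $x = 0$ then shows that $d\pi_{(0,\mu_0)}$ restricts to an isometry onto $T_p M$, which is exactly the Riemannian submersion condition at this point.

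For the totally geodesic claim I would show that every straight line $\gamma(t) = (0, \mu_0 + tv)$ in the fiber $E_p$ satisfies the geodesic equation of $h_E$. Since $\dot\gamma = v^\sigma \partial/\partial \mu^\sigma$ is constant and the curve stays in $\{x = 0\}$, the equation reduces to $\tilde\Gamma^A_{\sigma\tau}(0, \mu_0 + tv)\,v^\sigma v^\tau = 0$ for all $t$, where $\tilde\Gamma$ denotes the Christoffel symbols of $h_E$. Using the block-diagonal form of $h_E^{-1}$ at $x = 0$, each $\tilde\Gamma^A_{\sigma\tau}(0,\mu)$ is a linear combination of the three expressions $\partial_\sigma h^E_{B\tau}$, $\partial_\tau h^E_{B\sigma}$ and $\partial_B h^E_{\sigma\tau}$ with $B$ either purely horizontal or purely vertical. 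From (\ref{metri}) each such derivative either vanishes identically (the $\mu$-derivatives of $h^E_{\sigma\tau} = H_{\sigma\tau}(x)$), or is proportional to $\Gamma^\alpha_{\beta j}(0) = 0$ (the $\mu$-derivatives of $h^E_{j\tau}$), or equals $(\partial H_{\sigma\tau}/\partial x^j)(0) = 0$. Hence $\tilde\Gamma^A_{\sigma\tau}(0,\mu) \equiv 0$ along the entire curve, and the geodesic equation is satisfied.

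The essential, and only non-trivial, point is that these vanishings must be uniform in $\mu$ along the line $\mu_0 + tv$, not only at the single point $(0, \mu_0)$; this is automatic because $H_{\sigma\tau}$ depends only on the base coordinates $x$, so the hypothesis at $x = 0$ propagates trivially in the fiber direction. The remainder is a direct index calculation from (\ref{metri}) and (\ref{invmetri}), and I do not anticipate any conceptual obstacle beyond organising the Christoffel-symbol expansion carefully.
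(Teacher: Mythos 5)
Your proposal is correct and follows essentially the same route as the paper: compute in the preferred coordinates $(x^i,\mu^\sigma)$, observe from (\ref{metrinulla}) (and the intrinsic decomposition via $K$) that $\pi$ is a Riemannian submersion at $(0,\mu)$, and check the geodesic equation for fiberwise straight lines by showing that the relevant Christoffel symbols $\tilde{\Gamma}^A_{\sigma\tau}(0,\mu)$ of $h^E$ vanish for all $\mu$, using the block-diagonal form of $(h^E)^{-1}$ at $x=0$ together with the hypotheses $\Gamma^\alpha_{\beta i}(0)=0$ and $(\partial H_{\sigma\tau}/\partial x^k)(0)=0$. Your treatment is slightly more explicit than the paper's (which writes the full geodesic system and simply asserts the vanishing of $\tilde\Gamma^j_{\sigma\tau}(0,\mu)$ and $\tilde\Gamma^\eta_{\sigma\tau}(0,\mu)$ from the vanishing of the first derivatives of $g_{ij}$ and $h_{\sigma\tau}$ at $x=0$), but the argument is the same.
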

		\begin{rem}
	The vector bundles of the Examples \ref{twos} and \ref{threes} satisfy the assumptions of Proposition \ref{geodesis}.
	\end{rem}
	\begin{proof}
	Fix the coordinates $\{x^i, \mu^\sigma\}$ where the coordinates $\{x^i\}$ are normal and centered on a point $p$ and $\mu^\sigma$ refer to the local frame $\{s_\sigma\}$. It follows from (\ref{metrinulla}) that $\pi$ is a Riemannian submersion. Let $v_p$ and $w_p$ be two vectors in $E_p$. Let us suppose that $v_p = (0, v^\sigma)$ and $w_p = (0, w^\sigma)$ with respect to the coordinates $\{x^i, \mu^\sigma\}$. Consider the straight line connecting $v_p$ and $v_p$ defined as $\gamma(t) = t\cdot (v_p - w_p) + w_p$. Consider the Christoffel symbols of the Levi-Civita connection induced by $h^E$ with respect to the coordinates $\{x^i, \mu^\sigma\}$ and the local frame $\{ \frac{\partial}{\partial x^i},\frac{\partial}{\partial \mu^\sigma} \}$. Let us denote by $\tilde{\Gamma}^{i}_{\sigma \tau}$ and by $\tilde{\Gamma}^{\eta}_{\sigma \tau}$ the Christoffel symbols defined by
	\begin{equation}
	 \nabla^{h^E}_{\frac{\partial}{\partial \mu^\sigma}} \frac{\partial}{\partial \mu^\tau}(x,\mu) = \tilde{\Gamma}^{i}_{\sigma \tau}(x, \mu) \frac{\partial}{\partial x^i} + \tilde{\Gamma}^{\eta}_{\sigma \tau}(x, \mu)\frac{\partial}{\partial \mu^\eta}.
	\end{equation}
	Because the partial derivatives of $g_{ij}$ and of $h_{\sigma, \tau}$ in $x= 0$ are zero, we obtain that $\tilde{\Gamma}^{j}_{\sigma \tau}(0, \mu) = \tilde{\Gamma}^{\eta}_{\sigma \tau}(0, \mu) = 0$. Then, if we parametrize the curve $\gamma(t) := (0, t(v^\sigma - w^\sigma) + w^\sigma)$, then $\gamma$ satisfies the system of geodesics equations
	\begin{equation}
	\begin{cases}
	\frac{d^2\gamma^{i}}{dt^2}  + \tilde{\Gamma}^{i}_{\sigma \tau}(x, \mu)\frac{d\gamma^{\sigma}}{dt} \frac{d\gamma^{\tau}}{dt} + \tilde{\Gamma}^{i}_{j \tau}(x, \mu)\frac{d\gamma^{j}}{dt} \frac{d\gamma^{\tau}}{dt} + \tilde{\Gamma}^{i}_{jz}(x, \mu)\frac{d\gamma^{j}}{dt}  \frac{d\gamma^{z}}{dt}  = 0 \\
	\frac{d^2\gamma^{\alpha}}{dt^2}  + \tilde{\Gamma}^{\alpha}_{\sigma \tau}(x, \mu)\frac{d\gamma^{\sigma}}{dt} \frac{d\gamma^{\tau}}{dt} + \tilde{\Gamma}^{\alpha}_{j \tau}(x, \mu)\frac{d\gamma^{j}}{dt} \frac{d\gamma^{\tau}}{dt} + \tilde{\Gamma}^{\alpha}_{jz}(x, \mu)\frac{d\gamma^{j}}{dt}  \frac{d\gamma^{z}}{dt}  = 0. 
	\end{cases}
	\end{equation}
	\end{proof}
	\begin{cor}
	Consider $0_E$ the null section of a vector bundle $\pi: E \longrightarrow M$. Under the same assumptions of the Proposition \ref{geodesis}, the disk bundle $E^\delta := \{v_p \in E \vert \sqrt{h(v_p,v_p)}  \leq \delta \}$ coincides with $B_{\delta}(0_E) := \{v_p \in E \vert d_{h^E}(v_p, 0_E) \leq \delta \}$ for each $\delta > 0$.
	\end{cor}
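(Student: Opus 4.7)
The plan is to establish the two set inclusions separately, using the two main consequences of Proposition \ref{geodesis}: that straight lines in fibers are geodesics, and that $\pi$ is a Riemannian submersion with block-diagonal metric (\ref{metrinulla}) and inverse (\ref{invmetri}) at the distinguished coordinates.

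First I would prove the inclusion $E^\delta \subseteq B_\delta(0_E)$. Given $v_p \in E$ with $\sqrt{h(v_p,v_p)} \leq \delta$, consider the straight line $\gamma:[0,1]\to E_p$, $\gamma(t):=tv_p$. By Proposition \ref{geodesis} the fiber $E_p$ is a totally geodesic submanifold in which straight lines are geodesics, so $\gamma$ is a geodesic of $h_E$ joining $0_p$ to $v_p$. Since $\gamma$ is purely vertical, formula (\ref{metri}) gives $|\gamma'(t)|_{h_E}^2 = h_{\sigma\tau}(x_p)\,v_p^\sigma v_p^\tau = h(v_p,v_p)$, hence its length equals $\sqrt{h(v_p,v_p)}\leq \delta$. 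Therefore $d_{h_E}(v_p,0_p)\leq \delta$, and a fortiori $d_{h_E}(v_p, 0_E)\leq \delta$.

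For the reverse inclusion $B_\delta(0_E)\subseteq E^\delta$, the plan is to show that the continuous function $r:E\to \mathbb{R}_{\geq 0}$, $r(v):=\sqrt{h(v,v)}$, satisfies $|\nabla r|_{h_E}\equiv 1$ on $E\setminus 0_E$, from which 1-Lipschitz continuity with respect to $d_{h_E}$ will follow. Fix any $v_p\in E$ with $v_p\neq 0_p$; by hypothesis we can choose normal coordinates $\{x^i\}$ on $M$ centered at $p$ and a frame $\{s_\sigma\}$ such that $\Gamma^\alpha_{\beta i}(0)=0$ and $\partial h_{\sigma\tau}/\partial x^k(0)=0$. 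In the induced coordinates $(x^i,\mu^\sigma)$ on $E$, write $r^2(x,\mu)=h_{\sigma\tau}(x)\mu^\sigma\mu^\tau$. At the point $(0,\mu_0)$ corresponding to $v_p$, the hypotheses give $\partial_k r^2(0,\mu_0)=0$ and $\partial_\sigma r^2(0,\mu_0)=2h_{\sigma\rho}(0)\mu_0^\rho$, while by (\ref{metrinulla})–(\ref{invmetri}) the inverse Sasaki metric is block-diagonal with fiber block $h^{\sigma\tau}(0)$. A short computation then yields $\nabla r^2(0,\mu_0)=2\mu_0^\sigma\partial_{\mu^\sigma}$ with $|\nabla r^2|_{h_E}^2 = 4h_{\sigma\tau}(0)\mu_0^\sigma \mu_0^\tau=4r^2$, so $|\nabla r|_{h_E}=1$. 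Since the distinguished coordinates exist at every point of $M$, this holds on all of $E\setminus 0_E$.

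Finally I would deduce that $r$ is 1-Lipschitz for $d_{h_E}$: for any $v,w\in E$ and any smooth curve $\gamma$ from $v$ to $w$, the open set $\gamma^{-1}(E\setminus 0_E)$ is a countable union of intervals on each of which $|(r\circ\gamma)'|\leq |\nabla r|_{h_E}|\gamma'|=|\gamma'|$; continuity of $r$ at points of $0_E$ (where $r=0$) extends the estimate, giving $|r(v)-r(w)|\leq L(\gamma)$ and, after taking infimum, $|r(v)-r(w)|\leq d_{h_E}(v,w)$. Since $r$ vanishes on $0_E$, this yields $r(v_p)\leq d_{h_E}(v_p,0_E)\leq\delta$ for every $v_p\in B_\delta(0_E)$, proving the remaining inclusion. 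The main delicate point is the second step: verifying that the local identity $|\nabla r|_{h_E}=1$ indeed transfers to a global 1-Lipschitz property across $0_E$, but the continuity of $r$ and the countable-union argument on $\gamma^{-1}(E\setminus 0_E)$ handle it cleanly.
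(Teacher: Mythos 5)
Your proof is correct, and the argument for the inclusion $B_\delta(0_E)\subseteq E^\delta$ is genuinely different from the paper's. The paper proves both inclusions at once by showing $d_{h_E}(v_p,0_E) = \sqrt{h(v_p,v_p)}$: it takes a (hypothetical) shorter geodesic $\sigma$ from $v_p$ to a closest point $0_q$ of the zero section, uses the first-variation/Gauss-lemma argument to force $\dot{\sigma}(0_q)\perp T_{0_q}0_E$, and then invokes Proposition \ref{geodesis} to conclude that $\sigma$ must be the fiberwise straight line, so $q=p$ and $\sigma=\gamma$, a contradiction. You instead establish the reverse inclusion through an eikonal-type computation: in the distinguished coordinates you verify $\vert\nabla r\vert_{h_E}\equiv 1$ on $E\setminus 0_E$ directly from (\ref{metrinulla})--(\ref{invmetri}) and the hypothesis $\partial h_{\sigma\tau}/\partial x^k(0)=0$, then deduce that $r$ is $1$-Lipschitz for $d_{h_E}$ after handling the non-smooth locus $0_E$ by continuity. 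This buys something: you never need a minimizing geodesic from $v_p$ to $0_E$ to exist, and you never need to restrict to the injectivity radius of $E$ at $v_p$ (a ``without loss of generality'' reduction the paper invokes but does not justify). The paper's argument, on the other hand, is the more classical geodesic picture and pinpoints exactly which point of $0_E$ realizes the distance. Your argument is, if anything, the cleaner and more robust of the two; the only thing worth spelling out slightly more carefully is the passage across $0_E$, but your countable-union-of-intervals remark together with $r\vert_{0_E}=0$ handles it.
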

	\begin{proof}
Fix $v_p$ a point on $E$. Because of Proposition \ref{geodesis} we already know that the straight line $\gamma$ which connects $0_p$ and $v_p$ is a geodesic and its length is equal to $\sqrt{h(v_p,v_p)}$. Observe that if there is another point $0_q$ in $0_E$ such that $d(0_q, v_p) < \sqrt{h(v_p,v_p)}$, then there is a geodesics $\sigma$ connecting $v_p$ to $0_q$ which is shorter than $\gamma$.
\\Let us suppose that $\dot{\sigma}(0_q)$ is orthogonal to $T_{0_q}0_E$. Because of Proposition \ref{geodesis}, we know that $q = p$ and $\gamma = \sigma$. This is a contradiction.
\\Let us prove that $\dot{\sigma}(0_q)$ is orthogonal to $T_{0_q}0_E$. We can assume, without loss of generality, that $d(v_p, 0_E)$ is smaller than the radius of injectivity of $E$ in $v_p$.  
Observe that the map $d(v_p, \cdot): 0_E \longrightarrow \mathbb{R}$ is continuous and admits a minimum. \\Consider the ball $B_{d(v_p, 0_E)}(v_p)$. Then
\begin{equation}
B_{d(v_p, 0_E)}(v_p) \cap 0_E \subseteq \partial B_{d(v_p, 0_E)}(v_p) := S_{d(v_p, 0_E)}(v_p)
\end{equation}
and the intersection is the set of the minima of $d(v_p, \cdot)$. In particular, if $q$ is a minimum for $d(v_p, \cdot)$,
\begin{equation}\label{prova}
 T_q0_E \subseteq T_{q}S_{d(v_p, 0_E)}(v_p)
\end{equation}
as subspaces of $T_q E$. In order to prove (\ref{prova}) we use a simple \textit{reductio ad absurdum} argument. Let us suppose there is $v$ in $T_q0_E$ such that $v$ is not in $T_{q}S_{d(v_p, 0_E)}(v_p)$. Without loss of generality we can assume that $v$ is an inward vector for $B_{d(v_p, 0_E)}(v_p)$. On a open neighborhood of $B_{d(v_p, 0_E)}(v_p)$ there are some coordinates $\{x^1, ..., x^{n-1}, y\}$ such that the interior of $B_{d(v_p, 0_E)}(v_p)$ is given by the points $y > 0$. Let $\tau: (-\varepsilon, \varepsilon) \longrightarrow U$ be a curve in $0_E$ such that $\tau(0) = q$ and $\dot{\tau}(0) = v$. Observe that, in coordinates, $v = v^i \frac{\partial}{\partial x^i} + v_0 \frac{\partial}{\partial y}$ where $v_0 \geq 0$. But this means that there is a $t_0$ in $(-\varepsilon, \varepsilon)$ such that $y(\tau(t_0)) > 0$. But $\tau(t_0)$ is a point of $0_E$ and so it cannot be a point on the interior of $B_{d(v_p, 0_E)}(v_p)$. This proves (\ref{prova}).
\\Finally, thanks to the Gauss Lemma, the geodesic $\sigma: [0,1] \longrightarrow E$ connecting $\sigma(0) = v_p$ to a minimum $\sigma(1) = q$ has $\dot{\sigma}(1)$ orthogonal to $T_{q}S_{d(v_p, 0_E)}(v_p) \supseteq T_q0_E$.
\end{proof}
	In order to prove the following Proposition, we need a Lemma which is a classical result of the Measure Theory. A proof of this result can be found in page 29 of the book of Durrett \cite{Durr}.
	\begin{lem}[Multivariate Jensen's Inequality]\label{jensen}
	Let $(X, \Sigma_X, \mu_X)$ be a probability space\footnote{This is a measured space such that $\mu_X(X) =1$} and let $f:X \longrightarrow \mathbb{R}^n$. Consider $\phi: \mathbb{R}^n \longrightarrow \mathbb{R}$ a convex function. Then, for each integrable function $f: X \longrightarrow \mathbb{R}^n$
	\begin{equation}
	 \phi(\int_X f d\mu_X) \leq \int_X \phi(f) d\mu_X.
	\end{equation}
	\end{lem}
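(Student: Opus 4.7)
The plan is to reduce the multivariate statement to a pointwise inequality coming from convexity, and then integrate. Since $\phi:\mathbb{R}^n \longrightarrow \mathbb{R}$ is convex on all of $\mathbb{R}^n$, the supporting hyperplane theorem (applied to the epigraph of $\phi$) guarantees that at every point $y_0 \in \mathbb{R}^n$ there exists a subgradient $v \in \mathbb{R}^n$ such that
\begin{equation}
\phi(y) \geq \phi(y_0) + \langle v, y - y_0 \rangle \quad \text{for every } y \in \mathbb{R}^n.
\end{equation}
This is the key analytic input; everything else is bookkeeping.

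Next, I would set $y_0 := \int_X f\, d\mu_X$, which is a well-defined element of $\mathbb{R}^n$ because $f$ is integrable and $\mu_X$ is finite. Choose a corresponding subgradient $v$ at $y_0$. Evaluating the supporting inequality at $y = f(x)$ yields the pointwise bound
\begin{equation}
\phi(f(x)) \geq \phi(y_0) + \langle v, f(x) - y_0 \rangle \quad \text{for every } x \in X.
\end{equation}

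Finally, I would integrate this pointwise inequality against $\mu_X$. Using linearity of the integral in each coordinate of $f$, together with $\mu_X(X) = 1$, the integral of the linear correction term vanishes:
\begin{equation}
\int_X \langle v, f(x) - y_0 \rangle\, d\mu_X(x) = \Bigl\langle v,\ \int_X f\, d\mu_X - y_0 \Bigr\rangle = 0.
\end{equation}
Thus $\int_X \phi \circ f\, d\mu_X \geq \phi(y_0) = \phi\bigl(\int_X f\, d\mu_X\bigr)$, which is the desired inequality.

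The only delicate step is producing the subgradient $v$: this relies on the supporting hyperplane theorem in $\mathbb{R}^n$, which in turn uses a finite-dimensional Hahn–Banach / separation argument applied to the (convex) epigraph of $\phi$ and the point $(y_0,\phi(y_0))$ on its boundary. Measurability of $\phi \circ f$ follows from continuity of $\phi$ (automatic for convex functions on $\mathbb{R}^n$), so the integrals appearing above are unambiguously defined; integrability of $\phi \circ f$ is understood in the extended sense (the right-hand side is allowed to be $+\infty$, in which case the inequality is trivial).
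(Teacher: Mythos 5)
Your proof is correct and is the standard supporting-hyperplane (subgradient) argument for Jensen's inequality in $\mathbb{R}^n$: convexity yields an affine minorant touching $\phi$ at $y_0 = \int_X f\, d\mu_X$, and integrating the pointwise bound kills the linear term because $\mu_X(X)=1$. The paper itself offers no proof and simply cites Durrett, whose argument is the same in spirit, so there is no discrepancy to report; your remarks on measurability via continuity of convex functions and on the degenerate case $\int_X \phi\circ f = +\infty$ are the right caveats to include.
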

	\begin{prop}\label{star}
	Consider a Riemannian manifold $(M,g)$ and let $\pi: E \longrightarrow M$ be a vector bundle. Fix on $E$ a metric bundle $h_E$ and a connection $\nabla_E$. Let $h^E$ be the Sasaki metric on $E$ defined by using $g$, $h_E$ and $\nabla_E$. Fix a $\delta > 0$. Then, under the assumptions of Proposition \ref{geodesis} $\pi: E^\delta \longrightarrow M$ is a R.N.-Lipschitz map. In particular the Fiber Volume on a point $q$ is the Volume of an Euclidean ball of radius $\delta$. Finally the integration along the fibers $\pi_\star: \Omega_{c}^*(E^\delta) \longrightarrow \Omega^*(M)$ is an $\mathcal{L}^*$-bounded operator. 
	\end{prop}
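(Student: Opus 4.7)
The plan is to verify the three claims in order: that $\pi$ is Lipschitz, that the Fiber Volume equals a fixed Euclidean ball volume (and is in particular bounded), and that integration along fibers $\pi_\star$ is $\mathcal{L}^\star$-bounded. The first claim is immediate from Proposition \ref{geodesis}: a Riemannian submersion is $1$-Lipschitz, since the horizontal lift of any downstairs curve has the same length, yielding $d_M(\pi(u),\pi(v)) \leq d_{h^E}(u,v)$.

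For the Fiber Volume, I would invoke Proposition \ref{cosa} to reduce the problem to computing $\int_{F_q} i_q^*(Vol_{E^\delta}/\pi^*Vol_M)$. Fix $q \in M$ and pick normal coordinates $\{x^i\}$ around $q$ together with a local frame $\{s_\sigma\}$ satisfying the hypotheses of Proposition \ref{geodesis}. Equation \eqref{metrinulla} from Remark \ref{riemsub} then shows the matrix of $h^E$ is block diagonal at every point $(0,\mu)$ above $q$, so a direct calculation yields
\begin{equation}
i_q^*\frac{Vol_{E^\delta}}{\pi^*Vol_M}\bigg|_{\mu} = \sqrt{\det H_{\sigma\tau}(0)}\, d\mu^1\wedge\cdots\wedge d\mu^m,
\end{equation}
with $F_q$ corresponding to $\{\mu : H_{\sigma\tau}(0)\mu^\sigma\mu^\tau \leq \delta^2\}$. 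A linear change of variables diagonalising $H(0)$ identifies this integral with the Euclidean volume $V(\delta)$ of an $m$-ball of radius $\delta$; since this value is independent of $q$, the Fiber Volume is bounded and $\pi$ is R.N.-Lipschitz.

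For the $\mathcal{L}^\star$-boundedness of $\pi_\star$, the starting point is the pointwise estimate
\begin{equation}
|\pi_\star\omega(q)|_g \leq \int_{F_q} |\omega|_{h^E}\, d\mu_{F_q}
\end{equation}
valid for $\omega \in \Omega^*_c(E^\delta)$, which follows by decomposing $\omega$ by bidegree under the orthogonal horizontal-vertical splitting (orthogonality coming from the block diagonal form above $q$) and observing that only the top vertical component contributes while horizontal lifts preserve norms. Applying Lemma \ref{jensen} with $\phi(x)=|x|^p$ on the probability measure $d\mu_{F_q}/V(\delta)$ yields
\begin{equation}
|\pi_\star\omega(q)|^p \leq V(\delta)^{p-1}\int_{F_q}|\omega|^p\, d\mu_{F_q}.
\end{equation}
Integrating over $M$ and using the Fubini formula for the Riemannian submersion $\pi$ (again a consequence of the block diagonal structure of $h^E$),
\begin{equation}
\|\pi_\star\omega\|_p^p \leq V(\delta)^{p-1}\int_{E^\delta}|\omega|^p\, d\mu_{E^\delta} = V(\delta)^{p-1}\|\omega\|_p^p,
\end{equation}
so $\pi_\star$ is $\mathcal{L}^p$-bounded for every $p\in[1,+\infty)$.

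The main obstacle is the pointwise estimate $|\pi_\star\omega(q)|_g \leq \int_{F_q}|\omega|_{h^E}\, d\mu_{F_q}$: although standard for Riemannian submersions, the bookkeeping of the bidegree decomposition and the careful use of both orthogonality and the horizontal-lift isometry demand some care, and it is precisely here that the hypotheses of Proposition \ref{geodesis}, through Remark \ref{riemsub}, are essential.
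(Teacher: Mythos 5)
Your proposal is correct and follows essentially the same route as the paper: reduce Lipschitzness to the Riemannian-submersion property from Proposition \ref{geodesis}, compute the Fiber Volume in the special normal coordinates and adapted frame where the Sasaki metric is block diagonal over the fiber (yielding the Euclidean ball volume), and then establish the pointwise estimate by keeping only the top vertical component of the form, apply the multivariate Jensen inequality with the probability measure $d\mu_{F_q}/V(\delta)$ and $\phi = |\cdot|^p$, and integrate over $M$ via the Fubini/coarea identity for the Riemannian submersion. The only cosmetic difference is that the paper chooses the frame $\{e_j\}$ so that $H_{\sigma\tau}(0)$ is already the identity, whereas you keep $H_{\sigma\tau}(0)$ general and diagonalise by a linear change of variables — both give the same $V(\delta)$.
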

	\begin{proof}
	We know that $\pi$ is a Lipschitz map because it is a Riemannian submersion (Proposition \ref{geodesis}). In order to calculate the Fiber Volume of $\pi$, we choose a point $q$ in $M$ and consider some normal coordinates $\{x^i\}$ around $q$. Fix the frame $\{e_j\}$ of $E$ defined around $q$ such that $\Gamma^i_{jk}(0) = 0$ with respect to $\{e_j\}$ and to $\{x^i\}$. Let  $\{x^i, y^j\}$ be the fibered coordinates on $E^\delta$ where $\{y^j\}$ refer to $e_j$. The matrix related to $h^E$ in a point of the fiber of $q$ is the identity.
	\\This means that $Vol_{E}(x,y) = dx^1 \wedge ... \wedge dx^n \wedge dy^1 \wedge ... \wedge dy^m$ and $\pi^*Vol_M(x) = dx^1 \wedge ... \wedge dx^n$.
	\\We obtain that
	\begin{equation}
	\begin{split}
	Vol_{\pi, \mu_E, \mu_M} &=\int_F \frac{Vol_E}{\pi^*Vol_N} = \int_{B_\delta(0_q)} \frac{dx^1 \wedge ... \wedge dx^n \wedge dy^1 \wedge ... \wedge dy^m}{dx^1 \wedge ... \wedge dx^n}\\
	&= \int_{B_\delta(0_q)} dy^1 \wedge ... \wedge dy^m = Vol(B_\delta(0)).
	\end{split}    
	\end{equation}
	Let us study the $\mathcal{L}^*$-boundedness of $\pi_\star$. First we will show that, given $\alpha$ in $\Omega^*_c(E)$, we have 
	\begin{equation}\label{popo2}
	\vert\pi_\star \alpha\vert_q^p \leq C^{p-1}\int_{E_q} \vert \alpha \vert_y^p d\mu_{E_q}(y),    
	\end{equation}
	where $\mu_{E_q}$ is the measure on the fiber $E_q$ induced by $\frac{Vol_E}{\pi^*Vol_N}$.
	\\Fix the same coordinates $\{x^i, y^j\}$ we used before. We obtain
	\begin{equation}
	 \alpha = \alpha_{I0}(x,y)dx^I \wedge dy^1 \wedge dy^n + \alpha_{IJ}(x,y)dx^I \wedge dy^J,
	\end{equation}
	where $J \neq (1, 2, ..., m)$. Let us denote by $\alpha_0 = \alpha_{I0}(x,y)dx^I \wedge dy^1 \wedge ... \wedge dy^n$. Observe that for each $r$ in the fiber of $q$ we have $\vert\alpha_{0}\vert_r \leq \vert\alpha\vert_r$.
	\\Moreover
	\begin{equation}
	\int_{B_{\delta}(0_q)} \alpha_0 = \int_{B_{\delta}(0_q)} f(y) dy^1 \wedge ... dy^m =\int_{B_{\delta}(0_q)} f(y) d\mu_{E_q}(y)
	\end{equation}
	where $f:B_{\delta}(0_q) \longrightarrow \Lambda^*_q(M) \otimes \mathbb{C}$ and $\vert f(y)\vert = \vert \alpha_0\vert_y$. 
	\\In order to prove (\ref{popo2}), we want to apply the Multivariate Jensen's Inequality (Lemma \ref{jensen}). Let us define for each $q$ in $M$ the measure on the fiber of $q$ defined for each measurable set $A$ of $E_q$ as
	\begin{equation}
	\mu_{E_q,J}(A) := \frac{\mu_{E_q}(A)}{Vol_{\pi, \mu_E, \mu_M}(q)}.
	\end{equation}
	Then $\mu_{E_q,J}$ makes $B_{\delta}(0_q) \subset E_q$ a probability space. Let us define the map $F:B_{\delta}(0_q) \longrightarrow \Lambda^*_qM \otimes \mathbb{C} \cong \mathbb{R}^{2n}$
	\begin{equation}
	F(y) := Vol_{\pi, \mu_E, \mu_M}(q) \cdot f(y),
	\end{equation}
	Then, by the Jensen's Inequality, considering $\phi(x)= \vert x\vert^p$, we obtain that
	\begin{equation}\label{randl}
	\vert \int_{B_{\delta}(0_q)} F(y) d\mu_{E_q,J}\vert^p \leq   \int_{B_{\delta}(0_q)} \vert F(y) \vert^p d\mu_{E_q,J}.
	\end{equation}
	Observe that the left part of (\ref{randl}) is
	\begin{equation}
	    \begin{split}
	        &\vert \int_{B_{\delta}(0_q)} F(y) d\mu_{E_q,J}(y)\vert^p \\
	        &= \vert\int_{B_{\delta}(0_q)}  Vol_{\pi, \mu_E, \mu_M}(q) \cdot f(y) \cdot \frac{1}{Vol_{\pi, \mu_E, \mu_M}(q)} d\mu_{E_q}(y)\vert^p \\
	        &= \vert\int_{B_{\delta}(0_q)}  f(y) d\mu_{E_q}(y)\vert^p = \vert\int_{E_q} \alpha\vert^p = \vert\pi_\star \alpha\vert_q^p.
	    \end{split}
	\end{equation}
	The right part of (\ref{randl}) is
	\begin{equation}
	    \begin{split}
	      \int_{B_{\delta}(0_q)} \vert F(y)\vert^p d\mu_{E_q,J} &=  \int_{B_{\delta}(0_q)}  \vert Vol_{\pi, \mu_E, \mu_M}(q)\vert^p \cdot \vert f(y)\vert^p \frac{1}{Vol_{\pi, \mu_E, \mu_M}(q)} d\mu_{E_q}(y) \\
	      &=\int_{B_{\delta}(0_q)}  \vert Vol_{\pi, \mu_E, \mu_M}(q)\vert^{p-1} \cdot \vert f(y)\vert^p d\mu_{E_q}(y)\\
	      &\leq C^{p-1} \int_{E_q} \vert\alpha\vert_y^p d\mu_{E_q}(y).
	    \end{split}
	\end{equation}
	Then we conclude by showing that
	\begin{equation}
	\begin{split}
	&\vert\vert\pi_\star \alpha\vert\vert_p = \int_M \vert\pi_\star \alpha\vert_q^p d\mu_M(q) \\
	&\leq C^{p-1} \int_M [\int_{E_q} \vert\alpha\vert_y^p d\mu_{E_q}(y)] d\mu_M(q) = C^{p-1} \int_M Vol_M \cdot [\int_{E_q} \vert\alpha\vert_{y_q}^p \frac{Vol_{E}}{\pi^*Vol_M}]\\
	&= C^{p-1} \int_M \int_{E_q} \vert\alpha\vert_y^p \pi^*Vol_M \wedge \frac{Vol_{E}}{\pi^*Vol_M} = C^{p-1} \int_E \vert\alpha\vert_{v}^p Vol_{E}(v) = C^{p-1} \vert\vert\alpha\vert\vert_{p}
	\end{split}
	\end{equation}
	\end{proof}
\begin{rem}\label{Riccib}
Consider $(M,g)$ and $(N,h)$ two manifolds of bounded geometry and let $f:(M,g) \longrightarrow (N,h)$ be a $C^{k}_{b}$-map for each $k$ in $\mathbb{N}$. Fix on $f^*(TN)$ the Sasaki metric $g_{Sf}$ induced by $g$, $f^*h$ and $f^*\nabla^{LC}_h$. If we denote by $\nabla: = \nabla^{LC}_{g_S}$ and by $R$ the Riemann tensor on $f^*(TN)$, we obtain that for each $i$ in $\mathbb{N}$ there is a continuous function $C_i: \mathbb{R} \longrightarrow \mathbb{R}$ such that $\vert\nabla^i R(v_p)\vert \leq C_i(\vert\vert v_p\vert\vert)$. This is a consequence of (\ref{metri}), of (\ref{invmetri}) and of Theorem 2.5 of the paper of Schick \cite{flow}.
\end{rem}
\subsection{A submersion related to a uniform map}
Let $(N,h)$ and $(M,g)$ be two manifolds of bounded geometry and let $\delta \leq inj_N.$ Fix, moreover, a smooth map $f: (M,g) \longrightarrow (N,h)$.
\\In this subsection we define a submersion $p_f: (f^*(T^\delta N), g_{Sf}) \longrightarrow (N,h)$ where 
\begin{equation}
	f^*(T^\delta N) :=\{(p, w_{f(p)}) \in f^*(TN) \mbox{     such that     } \vert w_{f(p)} \vert \leq \delta\}.
\end{equation} 
\begin{lem}\label{tilde}
	Let us consider $f:(M,g) \longrightarrow (N,h)$ a smooth Lipschitz map between two oriented Riemannian manifolds. Suppose that $(N,h)$ is a manifold of bounded geometry.
	\\Let us denote by $F: f^*TN \longrightarrow TN$ the bundle morphism induced by $f$, i.e. $F(p, w_{f(p)}) := w_{f(p)}$. Fix on $f^*TN$ the Sasaki metric $g_{Sf}$ induced by $f^*\nabla^{LC}_h$, $f^*h$ and $g$. Let us denote by $\pi: f^*T^\delta N \longrightarrow M$ be the projection of the bundle. Then there is a map $p_f: (f^*(T^\delta N), g_{Sf}) \longrightarrow (N,h)$ such that:
	\begin{enumerate}
		\item $p_f$ is a submersion,
		\item ${p}_f(x,0) = f(x)$,
		\item ${p}_f$ is $\Gamma$-equivariant\footnote{We are considering on $f^*TN$ the action of $\Gamma$ given by $\gamma \cdot(p, w_{f(p)} ):= (\gamma \cdot p, d\gamma (w_{f(p)}))$},
		\item ${p}_f = {p}_{id_N} \circ F$,
		\item Assume that for each point $p$ in $M$ there are some local coordinates $\{U, x^i\}$ around a point $p$ in $M$ and some normal coordinates $\{V, y^j\}$ around $f(p)$ on $N$ such that $V$ contains a $\delta$-neighborhood of $f(U)$. Assume, moreover, that for each $k$ in $\mathbb{N}$
		\begin{equation}\label{boba}
		\sup\limits_{x \in U }	\sup\limits_{s = 0,...,k} \vert \frac{\partial^s y^j \circ f}{\partial x^i_1 ... \partial x^{i_s}}(x)\vert \leq L_k
		\end{equation}
		for some $L_k$ which does not depend on the choice of $p$. Consider the frame $\{\frac{\partial}{\partial y^j}\}$ around $f(0)$ and define the fibered coordinates $\{x^i, \mu^j\}$ related to $\{\frac{\partial}{\partial y^j}\}$ on $f^*T^\delta N$. Then for each $k$ in $\mathbb{N}$ there is a constant $C_k$ such that 
		\begin{equation}
			\sup\limits_{(x, \mu) \in \pi^{-1}(U) } \sup\limits_{s + t = 0,...,k}\vert \frac{\partial^s y^j \circ p_f}{\partial x^i_1 ... \partial x^{i_s}\partial \mu^{j_1} ... \partial \mu^{j_t}}(x,\mu) \vert \leq C_k
		\end{equation}
		where $C_k$ only depends on the bounds $L_1 ,..., L_k$. In particular if $f$ is a Lipschitz map, then also $p_f$ is a Lipschitz map.
	\end{enumerate}
\end{lem}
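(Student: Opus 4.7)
The natural candidate is to set $p_f(p, w_{f(p)}) := \exp^N_{f(p)}(w_{f(p)})$, which is well-defined on $f^*T^\delta N$ since $\delta \leq inj_N$. With this definition, properties (2)--(4) are immediate consequences of basic properties of the exponential map: $\exp^N_{f(x)}(0)=f(x)$ gives (2); the computation $p_{id_N}\circ F(p, w_{f(p)}) = \exp^N_{f(p)}(w_{f(p)}) = p_f(p, w_{f(p)})$ gives (4); and since $\Gamma$ acts by isometries on $N$ we have $\gamma\cdot \exp^N_q(v) = \exp^N_{\gamma q}(d\gamma(v))$, which combined with the $\Gamma$-equivariance of $f$ and of the $\Gamma$-action on $f^*TN$ yields (3).

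For property (1), the restriction of $p_f$ to a single fiber $\{p\}\times T^\delta_{f(p)}N$ is the map $w\mapsto \exp^N_{f(p)}(w)$, which is a diffeomorphism onto $B_\delta(f(p))\subset N$ because $\delta\leq inj_N$. Hence the vertical part of $dp_f$ already has rank $n=\dim N$, so $dp_f$ is surjective everywhere and $p_f$ is a submersion.

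The bulk of the work is property (5). In the prescribed coordinates $\{y^j\}$ on $V$ and the fibered coordinates $\{x^i, \mu^j\}$ on $\pi^{-1}(U)$, one writes $y^j\circ p_f(x, \mu) = \Phi^j(y^l(f(x)), \mu)$, where $\Phi(q, v):= \exp^N_q(v)$ is the exponential map of $N$ read off in the single chart $\{y^j\}$ as a smooth map from an open subset of $V\times \mathbb{R}^n$ to $V$. Because $(N,h)$ has bounded geometry and $\{y^j\}$ are normal coordinates on a set containing a $\delta$-neighborhood of $f(U)$, the Christoffel symbols of $h$ and all their derivatives are uniformly bounded on $V$; standard continuous dependence of the geodesic ODE on initial data and parameters then produces, for every $k\in\mathbb{N}$, a uniform bound on all partial derivatives of $\Phi$ of order $\leq k$ on $V\times B_\delta(0)$ depending only on $\delta$ and on the bounded-geometry data of $N$. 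Combining this with the hypothesis $\sup_{U}\sup_{s\leq k}|\partial^s(y^l\circ f)|\leq L_s$ via Fa\`a di Bruno's formula produces constants $C_k = C_k(L_1,\ldots,L_k)$ as required, and the Lipschitz claim is then the $k=1$ case.

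The main obstacle is the chain-rule bookkeeping in (5): one must verify that every factor appearing when one differentiates $\Phi(f(x), \mu)$ with respect to $x$ and $\mu$ falls into one of two already-controlled classes, namely derivatives of $f$ (bounded by the $L_s$) or partial derivatives of the parametric exponential (bounded by the bounded-geometry data of $N$). The essential point making these bounds uniform rather than merely local is precisely the hypothesis that $\{y^j\}$ is a single chart containing a $\delta$-neighborhood of $f(U)$: the base-point dependence of $\exp^N$ sits inside one bounded-geometry chart of $N$, so its partial derivatives are genuinely uniformly bounded.
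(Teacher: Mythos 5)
Your proposal is correct and follows essentially the same route as the paper: you take $p_f(p,w_{f(p)}) = \exp^N_{f(p)}(w_{f(p)})$, handle (1)--(4) by direct inspection of the exponential map and the isometric $\Gamma$-action, and for (5) you factor $y^j\circ p_f$ through the parametric exponential $\Phi(q,v)=\exp^N_q(v)$ composed with $f$ --- which is exactly the paper's reduction of (5) to the case $p_{id}$ via $p_f = p_{id}\circ F$, using (\ref{boba}) for the outer factor. The paper makes the ODE estimate precise by identifying $p_{id}$ with the time-one geodesic flow in normal coordinates and invoking Lemma~3.4 of Schick \cite{flow}, which is the concrete reference standing behind your appeal to ``standard continuous dependence of the geodesic ODE'' under bounded geometry; otherwise the arguments coincide.
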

\begin{proof}
	Let us define
	\begin{equation}
		\begin{split}
			p_f: (f^*(T^\delta N), g_s) &\longrightarrow (N,h) \\
			(p, w_{f(p)}) &\longrightarrow exp_{f(p)}(w_{f(p)}).
		\end{split}
	\end{equation}
	Then
	\begin{enumerate}
		\item $p_f$ is a submersion. Fix $p$ in $M$. Then ${p}_f(p, \cdot): f^*(T^\delta N)_p = T^\delta_{f(p)} N \longrightarrow N$ is the exponential map in $f(p)$. We know that the exponential map is a local diffeomorphism and so $p_f$ is a submersion,
		\item $p_f(p, 0_{f(p)}) = f(p)$. This follows by the definition of exponential map,
		\item $p_f$ is $\Gamma$-equivariant. Recall that $\Gamma$ acts by isometries. Then
		\begin{equation}
			\begin{split}
				p_f(\gamma p, d\gamma w_{f(p)}) &= exp_{f(\gamma p)}d\gamma w_{f(p)} = exp_{\gamma f(p)}d\gamma w_{f(p)}\\
				&= \gamma exp_{f(p)}w_{f(p)} = \gamma {p}_f(p, w_{f(p)}).
			\end{split}
		\end{equation}
		\item ${p}_f = {p}_{id_N} \circ F$. It is obvious, indeed $F(p, w_{f(p)}) = w_{f(p)}$ and $p_{id}: T^\delta N \longrightarrow N$ is $p_{id}(v_p) := exp_p(v_p)$,
		\item Because of the previous point and because of (\ref{boba}) it is sufficient to prove the assertion only for $p_{id}$. Moreover, in the case of the identity, we can also suppose that $\{x^i\}$ and $\{y^j\}$ are the same normal coordinates.
		\\Consider $p_{id}$ restricted to $\pi^{-1}(U)$. It can be seen as $\pi \circ \phi (x,\mu)$ where $\phi$ is the flow of the system of differential equations given by
		\begin{equation}
			\begin{cases}
				\dot{x}^k = \mu^k\\
				\dot{\mu}^k = - \Gamma^k_{ij}(x) \mu^i\mu^j
			\end{cases}
		\end{equation} 
		Because of Lemma 3.4 of the paper of Schick \cite{flow}, the partial derivatives of $\phi$ are uniformly bounded. Then we conclude that the derivatives of  $p_{id}$ are uniformly bounded. 
	\end{enumerate}
\end{proof}
	\section{The pull-back functor}\label{Pullback}
	\subsection{The Fiber Volume of $p_f$}
	\begin{lem}\label{svolta}
		Consider $f:(M,g) \longrightarrow (N,h)$ a smooth Lipschitz map between Riemannian manifolds of bounded geometry. Let $\delta \leq inj_{N}$. Then the map ${t}_f: (f^*(T^\delta N), g_{Sf}) \longrightarrow (M \times N, g + h)$ defined as
		\begin{equation}
		{t}_f(p,w_{f(p)}) = (p, p_f(p, w_{f(p)}))
		\end{equation}
		is an R.-N.-Lipschitz diffeomorphism with its image.
	\end{lem}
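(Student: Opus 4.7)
The statement asks for three properties: $t_f$ is a diffeomorphism onto its image, Lipschitz, and has uniformly bounded Fiber Volume. I plan to verify these in turn.

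For each fixed $p \in M$, the slice map $w \mapsto t_f(p, w) = (p, exp_{f(p)}(w))$ is a diffeomorphism from $T^\delta_{f(p)} N$ onto $\{p\} \times B_\delta(f(p))$, because $\delta \leq inj_N$. Global injectivity of $t_f$ then follows by reading off $p$ from the first coordinate and $w$ from $exp_{f(p)}^{-1}$, while smoothness is immediate from Lemma \ref{tilde}. The differential $d t_f$ has block-triangular form with the identity in the $M$-direction and the invertible $d_w exp_{f(p)}$ in the fibre direction, so $t_f$ is a local diffeomorphism and therefore a diffeomorphism onto the open subset $t_f(f^*T^\delta N) \subset M \times N$. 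The Lipschitz condition is equally quick: the projection $\pi: (f^*T^\delta N, g_{Sf}) \to (M, g)$ is a Riemannian submersion by Proposition \ref{geodesis} and hence $1$-Lipschitz, and $p_f$ is Lipschitz by the final item of Lemma \ref{tilde}. Therefore $t_f = (\pi, p_f)$ is Lipschitz for the product metric $g + h$ on $M \times N$.

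The substantive work lies in bounding the Fiber Volume. Since $t_f$ is an orientation-preserving diffeomorphism onto its image, Remark \ref{oss} identifies the Fiber Volume at $t_f(x, \mu)$ with the pointwise ratio $Vol_{f^*T^\delta N}/t_f^*(Vol_M \wedge Vol_N)$ evaluated at $(x, \mu)$. I would compute this ratio in fibered coordinates $\{x^i, \mu^j\}$ built from normal coordinates $\{x^i\}$ on $M$ centred at a base point $p_0$, normal coordinates $\{y^j\}$ on $N$ centred at $f(p_0)$, and the frame $\{\partial/\partial y^j\}$ on $f^*TN$. A Schur-complement computation applied to the block formulas (\ref{metri}) for $g_{Sf}$ gives $\det g_{Sf}(x, \mu) = \det g(x) \cdot \det h(f(x))$, while expanding $t_f^*(dx \wedge dy) = \det(\partial(y \circ p_f)/\partial \mu)\, dx \wedge d\mu$ yields
\[
\frac{Vol_{f^*T^\delta N}}{t_f^*(Vol_{M \times N})}(x, \mu) = \frac{\sqrt{\det h(f(x))}}{\sqrt{\det h(p_f(x, \mu))} \cdot \det\bigl(\partial(y \circ p_f)/\partial \mu\bigr)(x, \mu)}.
\]
The factors $\sqrt{\det h(\cdot)}$ are uniformly comparable to $1$ throughout the normal chart thanks to the bounded geometry of $(N, h)$, and the Jacobian $\det(\partial(y \circ p_f)/\partial \mu)$ is uniformly bounded from above by Lemma \ref{tilde}.(5).

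The main obstacle is the uniform positive lower bound on $\det(\partial(y \circ p_f)/\partial \mu)$, which is what keeps the displayed ratio finite. Item (5) of Lemma \ref{tilde} only supplies upper bounds on derivatives, so non-degeneracy has to come from a separate argument: the condition $\delta \leq inj_N$ together with the uniform sectional curvature bounds of $(N, h)$ allow one to invoke a Rauch-type Jacobi-field comparison, forcing $d\, exp_{f(x)}$ to be uniformly invertible on $T^\delta_{f(x)} N$ with bounds depending only on the geometry of $(N, h)$. Combining this lower bound with the upper estimates above yields a uniform upper bound on the Fiber Volume and completes the verification that $t_f$ is R.-N.-Lipschitz.
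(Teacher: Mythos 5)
Your proof is structured like the paper's: $t_f$ is a diffeomorphism onto its image via the block-triangular Jacobian, Lipschitz because $p_f$ is, and the Fiber Volume is computed through Remark \ref{oss} as the pointwise ratio of volume forms in fibered normal coordinates. Your Schur-complement identity $\det g_{Sf}(x,\mu) = \det g(x) \cdot \det h(f(x))$ is correct and matches what the paper uses implicitly; your displayed formula for the ratio is the same as the paper's expression $\frac{\sqrt{\det G_{ij}}}{t_f^*\sqrt{\det H_{ij}}}\cdot \frac{1}{\det J\exp_{f(x)}(\mu)}$, just written out.

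Where you diverge is the last step, and this is where you miss the paper's key simplification. You treat the lower bound on $\det(\partial(y\circ p_f)/\partial\mu)$ as the ``main obstacle'' and propose a Rauch/Jacobi-field comparison. But the Fiber Volume is a pointwise function on the image, so one is free to recenter the fibered normal chart at each point: take $\{x^i\}$ normal at $p$ and $\{y^j\}$ normal at $f(p)$, and evaluate at the chart center $(0,\mu)$. Since $\{y^j\}$ are normal coordinates at $f(p)$, the map $\mu \mapsto y\bigl(\exp_{f(p)}(\mu^j\partial/\partial y^j|_{f(p)})\bigr)$ is literally the identity on $B_\delta(0)$, so $J\exp_0(\mu) = I$ exactly — no comparison estimate is needed. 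Simultaneously, the Christoffel symbols of $f^*\nabla^{LC}_h$ vanish at $x=0$, so by (\ref{metrinulla}) the Sasaki metric matrix is the identity there and $\sqrt{\det G_{ij}}(0,\mu) = 1$. The ratio collapses to $\det(h_{ij}(\mu))^{-1/2}$, uniformly bounded by bounded geometry (Theorem 2.5 of \cite{flow}).

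The Rauch route as you state it is also not quite right. When $\delta = inj_N$ and $inj_N$ coincides with the conjugate radius (as on a round sphere), the Rauch lower bound on $\|d\exp_{f(x)}(v)\|$ degenerates as $|v| \to \delta$, so ``uniformly invertible on $T^\delta_{f(x)}N$'' does not follow from $\delta \leq inj_N$ and curvature bounds alone; you would need $\delta$ strictly below the conjugate radius. Recentering the normal chart at the point of interest, as the paper does, is both simpler and avoids this boundary case entirely.
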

	\begin{proof}
		We start by proving that ${t}_f$ is a diffeomorphism with its image. Observe that 
		\begin{equation}
		dim(f^*(T^\delta N)) = m+ n = dim(M) + dim(N) = dim(M \times N).
		\end{equation}
		 Fix some normal coordinates $\{x^i\}$ around a point $p$ in $M$ and let $\{y^j\}$ be some normal coordinates around $f(p)$ in $N$. Consider the frame $\{\frac{\partial}{\partial y^j}\}$ and define the fibered coordinates $\{x^i, \mu^j\}$ related to $\{\frac{\partial}{\partial y^j}\}$ on $f^*(T^\delta N)$. Consider on $M\times N$ the normal coordinates $\{x^i, y^j\}$. Then the Jacobian of $t_f$ is given by
		\begin{equation}
		Jt_f(x,\mu) = \begin{bmatrix}
		1 && \star \\
		0 && Jexp_{x}(\mu)
		\end{bmatrix}
		\end{equation}
		Then, since the exponential map is a diffeomorphism for each $x_0$,  $Jt_f$ is invertible. Moreover, ${t}_f$ is also injective, indeed if $(p, w_{f(p)})$ and $(q, v_{f(q)})$ have the same image, then $p = q$ and 
		\begin{equation}
		exp_{f(p)} w_{f(p)} = exp_{f(p)}v_p \implies w_p = v_p,
		\end{equation}
		since  their norm is less than $\delta$ and $\delta \leq inj_{N}$. We proved that ${t}_f$ is a diffeomorphism with its image.
		\\ Since $p_f$ is a Lipschitz map, also ${t}_f$ is a Lipschitz map. So, in order to prove that ${t}_f$ is a R.-N.-Lipschitz map, we have to show that it has bounded Fiber Volume. Consider a point $(p,q)$ in $M \times N$. Then its fiber is empty or it is a singleton $\{(p, w_{f(p)})\}$. Because of this and because of Remark \ref{oss}, we obtain that the Fiber Volume of ${t}_f$ is given by $\vert{t_f}^{-1^*}\frac{Vol_{T^\delta N}}{{t_f}^*Vol_{M \times N}}\vert$ on the image of $t_f$ and it is null otherwise.
		\\In particular, if $\frac{Vol_{T^\delta N}}{{t}^*Vol_{M \times N}}$ is a bounded function, which is \textit{a priori} not clear, then we can conclude that ${t}_f$ is a R.-N.-Lipschitz map. 
		\\Consider the fibered coordinates $\{x^i, \mu^j\}$ on $f^*(TN)$ and the coordinates $\{x^i, y^j\}$ on $M \times N$. Because of the definition of exponential map, the image of ${t}_f$ is contained in a $\delta$-neighborhood of the $Graph(f) \in M \times N$. Then we can cover all the image of ${t}_f$ using the normal coordinates $\{x^i, y^j\}$ around $(p,f(p))$.
		\\Observe that, with respect to these coordinates, we have ${t}_f(0, \mu^j) = (0, \mu^j)$.
		\\Consider $Vol_{f^*T^\delta N}(x, \mu) = \sqrt{det(G_{ij})}(x,\mu) dx^1 \wedge... \wedge d\mu^n$ and $Vol_{M\times N}(x,y) = \sqrt{det(H_{ij})}(x,y)dx^1 \wedge... \wedge dy^n$, where $G_{ij}$ is the matrix of $g_S$ on $f^*(T^\delta N)$ with respect to $\{x^i, \mu^j\}$ and $H_{ij}$ is the matrix of the metric on $M \times N$. Then 
		 \begin{equation}
		 \frac{Vol_{T^\delta N}}{t^*Vol_{M \times N}}(x,\mu) = \frac{\sqrt{det(G_{ij})}}{t_f^*(\sqrt{det(H_{ij})})}(x, \mu) \cdot \frac{1}{det(Jexp_{f(x)}(\mu))}.
		 \end{equation}
		 Observe that in $(0, \mu)$ the matrix $Jexp_{0}(\mu)$ is the identity. Moreover we also have that $G_{ij}(0,\mu)$ is the identity matrix and so $\sqrt{det(G_{ij})}(0,y) = 1$. Finally we obtain
	\begin{equation}
		H_{ij}(0,\mu) = \begin{bmatrix}
			1 && 0 \\
			0 && h_{ij}(\mu)
		\end{bmatrix}
	\end{equation}
	where $h_{ij}$ is the matrix related to the Riemannian metric $h$ in normal coordinates. Then  $det(H_{ij})^{-1}(0,y) \leq C$ because $N$ is a manifold of bounded geometry (Theorem 2.5 of \cite{flow}). This means that
	\begin{equation}
		\frac{Vol_{T^\delta N}}{t^*Vol_{M \times N}}(0,\mu) = \sqrt{\frac{det(G_{ij})}{t_f^*det(H_{ij})}}(0,\mu) \leq C
	\end{equation}
	and so the Fiber Volume of $t_f$ is bounded.
	\end{proof}
	\begin{cor}\label{R.-N.-p}
		Let $f:(M,g) \longrightarrow (N,h)$ be a smooth uniformly proper Lipschitz map between Riemannian manifolds of bounded geometry. Then $p_f$ is a R.-N.-Lipschitz map and $p_f^*$ is $\mathcal{L}^*$-bounded.
	\end{cor}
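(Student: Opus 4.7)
The plan is to factor $p_f$ through the map $t_f$ of Lemma \ref{svolta} by writing $p_f = \pi_N \circ t_f$, where $\pi_N : M \times N \longrightarrow N$ is the projection. The Lipschitz property of $p_f$ is already established in point (5) of Lemma \ref{tilde}, so all the work goes into showing that the Fiber Volume of $p_f$ is bounded; once this is done, the $\mathcal{L}^*$-boundedness of $p_f^*$ will follow immediately from the general proposition about R.-N.-Lipschitz pullbacks.

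First I would translate the Fiber Volume bound into a measure comparison: it suffices to find a constant $C > 0$ such that $\mu_{f^*T^\delta N}(p_f^{-1}(A)) \leq C \mu_N(A)$ for every measurable $A \subseteq N$. Using $p_f = \pi_N \circ t_f$, we have $p_f^{-1}(A) = t_f^{-1}(M \times A)$, and Lemma \ref{svolta} together with the R.-N.-Lipschitz criterion gives
\begin{equation}
\mu_{f^*T^\delta N}(t_f^{-1}(M \times A)) \leq C_{t_f}\, \mu_{M \times N}\bigl((M \times A) \cap t_f(f^*T^\delta N)\bigr).
\end{equation}
So the task reduces to controlling the product measure of this intersection.

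The main step, and the only place where uniform properness of $f$ really enters, is the following estimate. Because $\delta \leq inj_N$, any point in the image of $t_f$ has the form $(p, \exp_{f(p)}(w))$ with $|w| \leq \delta$, so if $(p,q)$ lies in $(M \times A) \cap t_f(f^*T^\delta N)$ then $q \in A$ and $d_N(f(p), q) \leq \delta$, i.e.\ $p \in f^{-1}(B_\delta(q))$. Applying Fubini and then the uniform properness of $f$ (which bounds $\mathrm{diam}(f^{-1}(B_\delta(q)))$ by $S(2\delta)$) together with the Bishop--Gromov consequence in Remark \ref{bvolume} (which bounds the $\mu_M$-measure of any set of diameter $\leq S(2\delta)$ by a constant $Q$ depending only on $\delta$ and the geometry of $M$), I would conclude
\begin{equation}
\mu_{M \times N}\bigl((M \times A) \cap t_f(f^*T^\delta N)\bigr) \leq \int_A \mu_M(f^{-1}(B_\delta(q)))\, d\mu_N(q) \leq Q \cdot \mu_N(A).
\end{equation}

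Combining these two inequalities yields $\mu_{f^*T^\delta N}(p_f^{-1}(A)) \leq C_{t_f} \cdot Q \cdot \mu_N(A)$, which is exactly the hypothesis of the Radon--Nikodym characterization in the remark after the definition of R.-N.-Lipschitz maps, so $p_f$ has bounded Fiber Volume and is therefore R.-N.-Lipschitz. The $\mathcal{L}^*$-boundedness of $p_f^*$ then follows from the proposition on pullbacks of R.-N.-Lipschitz maps. The main obstacle in this argument is really just bookkeeping: ensuring that the uniform properness of $f$ is applied with the correct radius (dictated by $\delta$) and that the dependence on the point $q$ in the bound $\mu_M(f^{-1}(B_\delta(q))) \leq Q$ is genuinely uniform, which is precisely what Remark \ref{bvolume} guarantees in the bounded geometry setting.
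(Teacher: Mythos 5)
Your proof is correct and follows essentially the same strategy as the paper: factor $p_f = \pi_N \circ t_f$, reduce to bounding the Fiber Volume via the boundedness of the Fiber Volume of $t_f$, and invoke uniform properness of $f$ together with Remark~\ref{bvolume} to uniformly bound $\mu_M(f^{-1}(B_\delta(q)))$. The one noteworthy difference is in the technical packaging. The paper obtains the key containment $p_f^{-1}(q) \subseteq \pi^{-1}(f^{-1}(B_C(q)))$ by observing that $p_f$ is homotopic to the map $\overline{f}(w_{f(p)}) := f(p)$ and appealing to Remark~\ref{uniformlyp}, and it then applies the composition formula for Fiber Volumes (Proposition~\ref{compo}) to write $Vol_{p_f}(q)$ as $\int_M Vol_{t_f}(p,q)\,d\mu_M$. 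You instead read off the containment $p \in f^{-1}(B_\delta(q))$ directly from $p_f(p,w_{f(p)}) = \exp_{f(p)}(w_{f(p)})$ with $|w| \leq \delta$, and you replace the composition formula by the measure-comparison characterization of R.-N.-Lipschitz maps together with Fubini on $M\times N$. Your route is slightly more self-contained (no detour through the homotopy machinery, no use of Proposition~\ref{compo}), at the cost of estimating measures of sets rather than computing the Fiber Volume pointwise; both yield the identical bound $C_{t_f}\cdot Q$ and both correctly isolate uniform properness and Bishop--Gromov as the crucial hypotheses.
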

	\begin{proof}
		 Because of Lemma \ref{svolta}, we know that $t_f$ is a R.-N.-Lipschitz map. Notice that $p_f = pr_N \circ t_f$, where $pr_N: M \times N \longrightarrow N$ is the projection on the second component.
		\\Consider $\overline{f}: f^*T^\delta N \longrightarrow N$ defined as $\overline{f}(w_{f(p)}) := f(p)$,
		Observe that $p_f \sim_\Gamma \overline{f}$. Because of Remark \ref{uniformlyp}, in particular (\ref{eqno}), there is a $C > 0$ such that
		\begin{equation}
		p_f^{-1}(q) \subset A_q := \overline{f}^{-1}(B_C(q)) = \pi^{-1}f^{-1}(B_C(q)),
		\end{equation}
	where $\pi: f^*T^\delta N \longrightarrow M$ is the projection of the bundle.  
		\\This means that if we fix a $q$ in $N$, then the Fiber Volume of $t_f$ in a point $(p,q)$ can be different from zero only if $p \in f^{-1}(B_C(q))$.  
		\\Then, as a consequence of Proposition \ref{compo}, the Fiber Volume of $p_f$ in a point $q$ is given by
		\begin{equation}
		Vol_{p_f}(q) = \int_M Vol_{t_f}(p,q) d\mu_M = \int_{f^{-1}(B_C(q))}  Vol_{t_f}(p,q) d\mu_M \leq K \cdot \mu_M(f^{-1}(B_C(q))).
		\end{equation}
		where $K$ is the supremum of the Fiber Volume of $t_f$. Since $f$ is uniformly proper, then the diameter of $f^{-1}(B_C(q))$ is uniformly bounded and so there is a point $x_0$ in $M$ and a radius $R$ such that
		\begin{equation}
		f^{-1}(B_C(q)) \subseteq B_R(x_0).
		\end{equation}
		Moreover, by Remark (\ref{bvolume}), there is a constant $V$ such that
		\begin{equation}
		\mu_M(f^{-1}(B_C(q))) \leq \mu_M(B_R(x_0)) \leq V
		\end{equation}
		and so
		\begin{equation}
		Vol_{p_f}(q) \leq K \cdot V
		\end{equation}
		 and $p_f$ is a R.-N.-Lipschitz map.
		 \end{proof}
	 \begin{rem}\label{salva}
	 	Consider a smooth map $f:(M,g) \longrightarrow (N,h)$ which is Lipschitz and uniformly proper. Let us suppose, moreover, that $(N,h)$ is a manifold of bounded geometry and $(M,g)$ has bounded Ricci curvature. Then the map $p_f: (f^*(T^\delta N), g_S) \longrightarrow (N,h)$ is well-defined and, moreover, using the same arguments we used in this section, it is also R.-N.-Lipschitz.
	 \end{rem}
	 \subsection{A Thom form for $f^*(TN)$}\label{forma}
	Let us introduce the notion of Thom form.
	 \begin{defn}
	 Let $\pi: E \longrightarrow M$ be a vector bundle. A smooth form $\omega$ in $\Omega_{cv}^*(E)$ is a \textbf{Thom form} if it is closed and its integral along the fibers of $\pi$ is equal to the constant function $1$.
	 \end{defn}
	 Given a Thom form $\omega$ of $f^*TN$ such that $supp(\omega)$ is contained in a $\delta_0 < \delta$ neighborhood of the null section, let us define an operator $e_\omega: \Omega^*(f^*(T^\delta N)) \longrightarrow \Omega^*(f^*(T^\delta N))$, for every smooth form $\alpha$ as
	  \begin{equation}
	  	e_\omega(\alpha) := \alpha \wedge \omega.
	  \end{equation}
	 Our goal, in this subsection, is to find a Thom form $\omega$ such that the operator $e_\omega$ is $\mathcal{L}^*$-bounded. To this end we use the Thom form introduced by Mathai and Quillen in \cite{mathai}. In their work, indeed, they compute a Thom form for a vector bundle endowed with a connection and a metric bundle. In particular, we follow the construction of this form given by Getzler in Proposition 1.3 of \cite{Gez}.
\\
\\First we construct the Thom form of $TN$.
\\Consider the bundle $\pi: (TN, h_S) \longrightarrow (N,h)$ and let $\pi^*(TN)$ be the pullback bundle over $TN$. Consider the bundle metric on $\pi^*(TN)$ given by $\pi^*h$ and fix the connection $\pi^*\nabla^{LC}_h$. Denote by $\Omega^{i,j}$ the algebra
\begin{equation}
	\Omega^{i,j} := \Omega^{i}(TN, \Lambda^j\pi^*TN) = \Gamma(TN, \Lambda^i T^*(TN) \otimes \Lambda^j\pi^*TN).
\end{equation}
We define $\Omega^{*,*} := \bigoplus\limits_{i,j \in \mathbb{N}}\Omega^{i,j}$. This is a bigraded algebra where $\Omega^{i,j}$ are the graded subspaces.
Let us define the section $X:TN \longrightarrow \pi^*TN$ as
\begin{equation}
X(v_p) := (v_p, v_p).
\end{equation}
Fix some normal coordinates $\{U, x^i\}$ on $N$ and let $\{\pi^{-1}(U), x^i, \mu^j\}$ be the coordinates on $TN$ induced by $\{x^i\}$ and by $\{\frac{\partial}{\partial x^i}\}$. Then 
\begin{equation}
X(x, \mu) := \mu^i \frac{\partial}{\partial x^i}.
\end{equation}
Consider the map $\pi^*g(X,X) = \vert X \vert^2: TN \longrightarrow \mathbb{R}$
\begin{equation}
 \vert X \vert^2(v_p) := h_p(v_p, v_p)
\end{equation}
This map can be see as a differential form in $\Omega^{0,0}$. In fibered coordinates it can be expressed as
\begin{equation}
\vert X \vert^2(x, \mu) = h_{ij}(x)\mu^i\mu^j.
\end{equation} 
Consider $\pi^*\nabla^{LC}_h (X)$: this is a form in $\Omega^{1,1}$ and, in local coordinates, it is given by
\begin{equation}
	\begin{split}
\pi^*\nabla^{LC}_h (X) &=  d\mu^i \otimes \frac{\partial}{\partial x^i}  + \mu^i \nabla \frac{\partial}{\partial x^i}\\
&= d\mu^i \otimes \frac{\partial}{\partial x^i} + \mu^i \Gamma^k_{ij}(x) dx^j \otimes \frac{\partial}{\partial x^k} \\
&= (\delta^k_j + \mu^i \Gamma^k_{ij}(x))dx^j \otimes \frac{\partial}{\partial x^k}
\end{split}
\end{equation} 
Finally let us consider $\Omega$ the curvature form of $N$ induced by the Levi-Civita connection $\nabla^{LC}_h$. This is a $2$-form on $N$ with values in $SO(TN)$, which is the bundle of the skew-symmetric endomorphisms of $TM$. Locally it is given by
\begin{equation}
\Omega(x) := R^{j}_{kls}(x) dx^k \wedge dx^l \otimes ( dx^s \otimes \frac{\partial}{\partial x^j}),
\end{equation}
where $R^j_{kls}$ are the components of the Riemann tensor of $N$.
\\Let us identify $\Omega^{i}(N) \otimes SO(TN)$ with $\Omega^{i}(N) \otimes \Gamma(\Lambda^2(TN))$ in the following way: for each $A \in \Omega^{i}(N) \otimes SO(TN)$ we define $\tilde{A}$ as the form locally defined as
\begin{equation}
\tilde{A}(y) := h(e_i(y), A(y)e_j(y))e_i(y) \wedge e_j(y) \in \Lambda^2(T_N)
\end{equation}
where $\{e_i\}$ is an orthonormal frame of $TN$. This means that the curvature $\Omega$, seen as an element of $\Omega^{i}(N) \otimes \Gamma(\Lambda^2(TN))$, is locally given by 
\begin{equation}
\Omega(x) := R^{ij}_{kl}(x) dx^k \wedge dx^l \otimes (\frac{\partial }{\partial x^i} \otimes \frac{\partial}{\partial x^j})
\end{equation}
where $R^{ij}_{kl} = h^{is}R^j_{kls}$. Pulling back $\Omega$ along $\pi$, we obtain $\pi^*\Omega$ which is a differential form in $\Omega^{2,2}$.
\\Let $\phi: \mathbb{R} \longrightarrow \mathbb{R}$ be a smooth function whose support is contained in $[\delta_0, \delta_0]$. Assume that 
\begin{equation}
    (-1)^\frac{n(n+1)}{2}\int_{\mathbb{R}^n} \phi^{(m)}(\frac{\vert x \vert ^2}{2}) dx = 1.
\end{equation}
Then we define
\begin{equation}
\overline{\omega} := \sum_{k=0}^{n} \frac{\phi^{(k)}(\frac{\vert X \vert ^2}{2})}{k!}(\pi^*\nabla X + \pi^*\Omega)^k,
\end{equation}
where $(\pi^*\nabla X + \pi^*\Omega)^k$ is the $k$-times wedge of $\pi^*\nabla X + \pi^*\Omega$.
\\Observe that the support of $\overline{\omega}$ is strictly contained in a $\delta_0$-neighborhood of the zero section. This is a consequence of $supp(\phi) \subseteq [- \delta_0, \delta_0]$.
\\In local fibered coordinate $\{(U,x^i, \mu^j)\}$, the form $\overline{\omega}$  is given by
\begin{equation}
\overline{\omega}(x, \mu) := \alpha_{IJ}^{K}(x, \mu) dx^I\wedge d\mu^J \otimes \frac{\partial}{\partial x^K}
\end{equation}
where, because of $N$ is a manifold of bounded geometry, there is a constant $C$ which does not depend on the choice of $U$, such that $\vert \alpha_{IJ}^{K}(x,\mu)\vert \leq C$.
\\Let us introduce the \textit{Berezin integral} $\textbf{B}$. This is the isometry $\textbf{B}: \Lambda^n(TN) \longrightarrow N \times \mathbb{R}$ defined as
\begin{equation}
\textbf{B}(\alpha_p) := (p, Vol_p(\alpha_p))
\end{equation}
where $n= dim(N)$ and $Vol_p$ is the volume form of $N$ in a point $p$ \footnote{Actually the definition of Berezin integral is much more general: this is the definition of Berezin integral for the fiber bundle $TN$.}. It can be extended to $\mathcal{B}: \Omega^{i,j} \longrightarrow \Omega^i(TN)$ by setting $\mathcal{B}(\alpha \otimes \beta) := \textbf{B}(\beta) \alpha$ if $j = n$, $\textbf{B}(\alpha \otimes \beta) := 0$ otherwise.
\\As showed in Proposition 1.3 of \cite{Gez},
\begin{equation}\label{Thom}
\omega := \mathcal{B}(\overline{\omega})
\end{equation}
is a Thom form. Observe that, in fibered coordinates $\{x,\mu\}$ on $TN$,
\begin{equation}\label{Thombound}
\omega(x, \mu) = \alpha_{IJ}(x, \mu) dx^I\wedge d\mu^J,
\end{equation}
where $\alpha_{IJ}(x,\mu) = det(h_{ij}(x))^{\frac{1}{2}} \alpha_{IJ}^0(x, \mu)$ and $\alpha_{IJ}^0(x, \mu)$ is the coefficient of $dx^I\wedge d\mu^J \otimes (\frac{\partial}{\partial x^1} \wedge ... \wedge \frac{\partial}{\partial x^n})$ in $\overline{\omega}$. So we obtain that $\vert \alpha_{IJ}(x,\mu) \vert $ are uniformly bounded. Moreover, the support of $\omega$ is contained in a $\delta_0$-neighborhood of the null section.
\begin{prop}
Consider $(N,h)$ a manifold of bounded geometry. Let $\alpha$ be a differential form on $(T^\delta N, h_S)$. If in fibered coordinates $\{x^i, \mu^j\}$, where $\mu^j$ refer to $\{\frac{\partial}{\partial x^i}\}$, the coefficients of $\alpha$ are uniformly bounded, then the pointwise norm $\vert\alpha_p\vert$ is uniformly bounded.
\end{prop}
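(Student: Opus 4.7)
The plan is to compute the pointwise norm of $\alpha$ in the fibered coordinates $\{x^i,\mu^j\}$ and to bound each factor that appears by using the explicit formulas for the inverse Sasaki metric together with the bounded geometry hypothesis on $(N,h)$.

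First I would fix an arbitrary point $v_p \in T^\delta N$, choose normal coordinates $\{x^i\}$ on $N$ around $p=\pi(v_p)$, and take the induced fibered coordinates $\{x^i,\mu^j\}$ on $T^\delta N$ built from the frame $\{\partial/\partial x^i\}$, so the hypothesis on the coefficients of $\alpha$ applies. Writing $\alpha = \tfrac{1}{k!}\alpha_{A_1\cdots A_k}\,dz^{A_1}\wedge\cdots\wedge dz^{A_k}$ with $z^A\in\{x^i,\mu^j\}$, the pointwise norm is
\begin{equation}
|\alpha|^2 \;=\; \frac{1}{k!}\, h_S^{A_1B_1}\cdots h_S^{A_kB_k}\,\alpha_{A_1\cdots A_k}\,\overline{\alpha_{B_1\cdots B_k}},
\end{equation}
so it suffices to bound the coefficients $h_S^{AB}$ of the inverse Sasaki metric uniformly on $T^\delta N$.

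For this step I would invoke the explicit formulas (\ref{invmetri}) recalled in Remark~\ref{riemsub}, specialized to $E = TN$ with $H = h$ and $\nabla^E = \nabla^{LC}_h$:
\begin{equation}
h_S^{ij} = h^{ij}(x), \qquad h_S^{i\sigma} = -\Gamma^\sigma_{\beta j}(x)\,h^{ij}(x)\,\mu^\beta,
\end{equation}
\begin{equation}
h_S^{\sigma\tau} = h^{\sigma\tau}(x) + h^{ij}(x)\,\Gamma^\sigma_{\beta i}(x)\,\Gamma^\tau_{\eta j}(x)\,\mu^\beta\mu^\eta.
\end{equation}
Since $N$ has bounded geometry and $\{x^i\}$ are normal coordinates, the components $h_{ij}(x)$ and $h^{ij}(x)$ are uniformly bounded with bounds independent of the chosen chart, and the same holds for the Christoffel symbols $\Gamma^\sigma_{\beta j}(x)$; these are precisely the classical estimates in normal coordinates for bounded geometry. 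Moreover, the factors $\mu^\beta$ satisfy $|\mu^\beta|\leq\delta$ uniformly on $T^\delta N$. Therefore each of $h_S^{ij}$, $h_S^{i\sigma}$ and $h_S^{\sigma\tau}$ is bounded by a constant $K$ that depends only on $\delta$ and on the bounded-geometry constants of $(N,h)$.

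Combining these estimates with the hypothesis $|\alpha_{A_1\cdots A_k}(x,\mu)|\leq C$ yields, for some constant $C'$ depending only on $C$, $K$, $k$ and $n = \dim N$,
\begin{equation}
|\alpha|^2(v_p) \;\leq\; C'\,K^{\,k}\,C^{\,2},
\end{equation}
uniformly in $v_p$, which is the desired conclusion. The only non-routine point is to ensure that the $\mu$-dependent entries of $h_S^{AB}$ remain bounded, and this is handled by restricting to the disk bundle $T^\delta N$ so that $|\mu|\leq\delta$; no obstacle of substance arises once (\ref{invmetri}) is in hand.
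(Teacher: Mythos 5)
Your proof is correct and is precisely the "direct computation" the paper leaves implicit: you write the pointwise norm via the inverse Sasaki metric, bound its components using the explicit formulas of Remark~\ref{riemsub} together with the bounded-geometry estimates in normal coordinates and the restriction $|\mu|\leq\delta$ on the disk bundle, and conclude. This matches the intended argument.
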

\begin{proof}
It is a direct computation.
\end{proof}
Let $f:(M,g) \longrightarrow (N,h)$ be a smooth Lipschitz map and consider $(f^*(TN), g_{Sf})$ where $g_{Sf}$ is the Sasaki metric induced by $g$, $f^* h$ and $f^*\nabla^{LC}_h$. Observe that if $\omega$ is the Thom form on $TN$ defined in (\ref{Thom}) and, if we consider the map $F: f^*(TN) \longrightarrow TN$ given by $F(p,w_{f(p)}) =(f(p), w_{f(p)})$, then $F^*\omega$ is a Thom form for $f^*(TN)$ and  we also have a uniform bound on the norm of $\vert F^*\omega \vert_p$.
\begin{prop}\label{eomega}
Let us consider a form $\alpha$ on a Riemannian manifold $(M,g)$. Suppose that there is a number $C$ such that $\vert \alpha_p\vert \leq C$ for each $p$ in $(M,g)$. Then the operator $e_{\alpha}(\beta) := \beta \wedge \alpha$ defines an $\mathcal{L}^*$-bounded operator.
\end{prop}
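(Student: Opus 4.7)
The plan is a short, pointwise-to-global argument. The key inequality is the well-known pointwise bound for the wedge product: on any Riemannian manifold, for forms $\eta_p, \tau_p \in \Lambda^*_pM$, there is a universal constant $K$ (depending only on the degrees, i.e.\ on the finite-dimensional exterior algebra $\Lambda^* T^*_pM$) such that
\begin{equation}
\vert \eta_p \wedge \tau_p \vert_p \leq K \cdot \vert \eta_p \vert_p \cdot \vert \tau_p \vert_p.
\end{equation}
I would first state this as a pointwise lemma; it follows either from viewing wedge as a bilinear operation on a finite-dimensional normed space or from writing both forms in an orthonormal coframe at $p$ and expanding.

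Next, I would fix $p \in [1, +\infty)$ and take $\beta \in \Omega^*_c(M)$. Applying the pointwise bound with $\eta_p = \beta_p$ and $\tau_p = \alpha_p$, and then using the uniform bound $\vert \alpha_p \vert \leq C$, I get
\begin{equation}
\vert (\beta \wedge \alpha)_p \vert \leq K \cdot \vert \beta_p \vert \cdot \vert \alpha_p \vert \leq KC \cdot \vert \beta_p \vert
\end{equation}
for every $p \in M$. Raising to the $p$-th power and integrating against $\mu_M$ yields
\begin{equation}
\Vert e_\alpha(\beta) \Vert_p^p = \int_M \vert \beta \wedge \alpha \vert^p d\mu_M \leq (KC)^p \int_M \vert \beta \vert^p d\mu_M = (KC)^p \Vert \beta \Vert_p^p,
\end{equation}
so $\Vert e_\alpha(\beta) \Vert_p \leq KC \Vert \beta \Vert_p$. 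Since this holds for every $p \in [1, +\infty)$ with the same constant $KC$ (the constant $K$ is universal, and $C$ is given), the operator $e_\alpha$ restricted to $\Omega^*_c(M)$ is bounded with respect to every $\mathcal{L}^p$-norm, which is exactly the definition of $\mathcal{L}^\star$-boundedness.

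There is essentially no obstacle here: the only substantive input is the pointwise wedge bound, and everything else is Fubini/monotonicity. If one wanted to be fully explicit about $K$, the only mild care is tracking the combinatorial factor coming from writing $\alpha$ and $\beta$ in an orthonormal coframe, but since the dimension of $M$ is fixed this is harmless.
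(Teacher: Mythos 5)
Your proposal is correct and takes essentially the same approach as the paper: the paper likewise reduces the statement to the pointwise Hadamard--Schwartz inequality $\vert \eta \wedge \tau \vert \leq K \vert \eta \vert \cdot \vert \tau \vert$ (citing \cite{Sbordone}) and then observes that the $\mathcal{L}^p$-bound with constant $KC$ follows by integration. You spell out the integration step more explicitly, but the substance is identical.
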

\begin{proof}
It is a direct consequence of Hadamard-Schwartz inequality \cite{Sbordone} which states that, given some linear forms $\alpha_1$, ..., $\alpha_k$ in $\mathbb{R}^n$ with degree $l_1, ..., l_k$, then there is a constant $C_n$ (which only depends on $n$) such that
\begin{equation}
\vert \alpha_1 \wedge ... \wedge \alpha_k\vert \leq C_n \vert \alpha_1 \vert \cdot... \cdot \vert \alpha_k\vert.
\end{equation}
\end{proof}
\begin{rem}\label{Thomgamma}
Consider the bundle $\pi: TN \longrightarrow N$, let $h_N$ be a Riemannian metric of bounded geometry on $N$ and let $\nabla^{LC}_h$ be the Levi-Civita connection. Fix on $TN$ the Sasaki metric $h_S$ induced by $h$ and $\nabla^{LC}_h$. Assume that a group $\Gamma$ acts FUPD on $(N,h)$ by isometries. Observe that on $TN$ there is an action of $\Gamma$ induced by the differential i.e.
\begin{equation}
    \gamma \cdot v_p := d\gamma(v_p).
\end{equation}
Then there is a $\Gamma$-equivariant Thom form $\omega$ for $TN$ which satisfies the assumptions of Proposition \ref{eomega} and its support is contained in a $\delta$-neighborhood of $0_{TN}$. In order to prove this fact, consider the Riemannian covering $s: (N,h) \longrightarrow (N/\Gamma, \tilde{h})$ where $\tilde{h}$ is the Riemannian metric induced by $h$. Then we obtain the map $ds:(TN, h_S) \longrightarrow (T(N/\Gamma), \tilde{h}_S)$. Observe that $TN/\Gamma$ and $T(N/\Gamma)$ are diffeomorphic and $ds$ can be seen as the quotient map. Observe that $h_S$, which is the Sasaki metric on $TN$ induced by $h$, is exactly the pullback metric of $\tilde{h}_S$. Then $ds$ is a local isometry.
\\So $\omega := (ds)^*\alpha$ is a $\Gamma$-equivariant Thom form for $TN$ which satisfies all the assumptions of Proposition \ref{eomega}.
\\
\\Let us consider a smooth Lipschitz map $f:(M,g) \longrightarrow (N,h)$ between manifolds of bounded geometry. Suppose that there is a FUPD action of a group $\Gamma$ by isometries on $M$ and $N$. Assume that $f$ is $\Gamma$-equivariant. There is an action of $\Gamma$ on $f^*TN$ given by
\begin{equation}
\gamma \cdot (p, w_{f(p)}) := (\gamma\cdot p, d\gamma(w_{f(p)})).
\end{equation}
If $\omega$ is a $\Gamma$-equivariant Thom form with support contained in a $\delta$-neighborhood of $0_{TN}$ which satisfies the assumptions of Proposition \ref{eomega}, we already know that $F^*\omega$ is a Thom form with support contained in a $\delta$-neighborhood of $0_{TN}$ which satisfies the assumptions of Proposition \ref{eomega}. Moreover $F^*\omega$ is also $\Gamma$-equivariant. In order to prove this it is sufficient to prove that $F: f^*TN \longrightarrow TN$ is $\Gamma$-equivariant. This is true, indeed
\begin{equation}
F(\gamma p, d\gamma(w_{f(p)})) = d\gamma(w_{f(p)}) = \gamma \cdot w_{f(p)}.
\end{equation}
by definition of $F$.
\end{rem}
	\subsection{The $T_f$ operator}
	Let $(M,g)$ and $(N,h)$ be two manifolds of bounded geometry and let $f:(M,g) \longrightarrow (N,h)$ be a uniformly proper, smooth, Lipschitz map.
	\\Let us denote by $pr_M$ the projection $pr_M: (f^*(T^\delta N),g_{Sf}) \longrightarrow (M,g)$, let $\omega$ be a Thom form of $TN$ defined as in the previous subsection and consider $p_f: (f^*(T^\delta N),g_{Sf}) \longrightarrow (M,g)$ the submersion related to $f$ which we introduced in the previous section. If $f$ is differentiable, then we can define the operator $T_f$ for each smooth $\mathcal{L}^p$-form as
	\begin{equation}
	T_f (\alpha) := pr_{M, \star} \circ e_{F^* \omega} \circ p_f^* (\alpha) =\int_{B^\delta} p_f^*(\alpha) \wedge F^*(\omega) \label{defT}
	\end{equation}
where $B^\delta$ denotes the fibers of $pr_M$. If $f$ is not a smooth Lipschitz map, we consider a smooth Lipschitz map $f'$ which is uniformly homotopic to $f$ \footnote{We know that such a $f'$ exists as a consequence of Proposition \ref{appr}} and we set\footnote{Actually, in this case, the definition of $T_f$ does depend on the choice of $f'$. We will not denote the choice of $f'$ because, as we will see later, $T_f$ induces some operators in (un)-reduced $L^{q,p}$-cohomology and in $L^{q,p}$-quotient cohomology which do not depend on the choice of $f'$. } $T_f := T_{f'}.$
	\begin{prop}
		Let $f$ be a uniform map between Riemannian manifolds of bounded geometry. Then the operator $T_f$ is an $\mathcal{L}^*$-bounded operator.
	\end{prop}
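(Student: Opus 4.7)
The plan is to exhibit $T_f$ as a composition of three operators, each of which has already been shown to be $\mathcal{L}^\star$-bounded in the preceding material, and then invoke the fact that a composition of $\mathcal{L}^\star$-bounded operators is $\mathcal{L}^\star$-bounded. Recall that, by definition,
\begin{equation}
T_f = pr_{M,\star} \circ e_{F^*\omega} \circ p_f^*,
\end{equation}
so it suffices to check the three factors separately. If $f$ is only a uniform map (not smooth Lipschitz), then by definition $T_f = T_{f'}$ for the smooth Lipschitz approximation $f'$ produced by Proposition \ref{appr}, so we may assume from the outset that $f$ itself is smooth and Lipschitz.

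First I would handle $p_f^*$. By Corollary \ref{R.-N.-p}, under the present hypotheses (bounded geometry plus uniform properness plus smooth Lipschitz) the submersion $p_f:(f^*(T^\delta N), g_{Sf}) \longrightarrow (N,h)$ is R.-N.-Lipschitz, and we have already proved that R.-N.-Lipschitz maps induce $\mathcal{L}^\star$-bounded pullbacks. Next I would treat $e_{F^*\omega}$. As discussed at the end of Section \ref{forma}, the form $F^*\omega$ is a Thom form for $f^*TN$ and, because $(N,h)$ has bounded geometry and the coefficient bound $\vert \alpha_{IJ}(x,\mu)\vert \leq C$ is uniform in the fibered normal coordinates, its pointwise norm $\vert (F^*\omega)_q\vert$ is uniformly bounded on $f^*(T^\delta N)$. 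Proposition \ref{eomega} then yields that $e_{F^*\omega}$ is $\mathcal{L}^\star$-bounded.

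The third and most delicate step is the integration along the fibers $pr_{M,\star}$. Here I would apply Proposition \ref{star} to the bundle $\pi = pr_M : f^*TN \longrightarrow M$, with the bundle metric $f^*h$, the pullback connection $f^*\nabla^{LC}_h$, and the Sasaki metric $g_{Sf}$ on the total space. The hypotheses of Proposition \ref{geodesis} (the existence, around every point, of normal coordinates on the base together with a frame in which the connection's Christoffel symbols and the first derivatives of the bundle metric coefficients vanish) are satisfied by the pullback bundle, as recorded in the remark following Proposition \ref{geodesis} and Example \ref{threes}. Hence $pr_{M,\star}: \Omega^*_c(f^*(T^\delta N)) \longrightarrow \Omega^*(M)$ is $\mathcal{L}^\star$-bounded, with Fiber Volume controlled by the volume of a Euclidean ball of radius $\delta$ uniformly in the base point.

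Putting the three bounds together, for every $p \in [1,+\infty)$ and every $\alpha \in \Omega^*_c(N)$ we obtain
\begin{equation}
\Vert T_f \alpha \Vert_p \;\leq\; \Vert pr_{M,\star}\Vert_{op}\, \Vert e_{F^*\omega}\Vert_{op}\, \Vert p_f^*\Vert_{op}\, \Vert \alpha \Vert_p,
\end{equation}
so $T_f$ is $\mathcal{L}^\star$-bounded. The only potential obstacle is verifying that the fibered coordinate coefficient estimates leading to the uniform pointwise bound on $F^*\omega$ really do hold uniformly in the base point; once that uniformity is in hand (as it is, via the bounded geometry of $N$ and the Lipschitz control on $f$ from Lemma \ref{tilde}), all three factors are handled by off-the-shelf results proved earlier.
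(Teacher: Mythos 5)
Your proposal is correct and matches the paper's own proof essentially line for line: both decompose $T_f = pr_{M,\star}\circ e_{F^*\omega}\circ p_f^*$, invoke Corollary \ref{R.-N.-p} for $p_f^*$, Proposition \ref{eomega} for $e_{F^*\omega}$, and Proposition \ref{star} for $pr_{M,\star}$, then conclude by composition. The extra detail you supply (the reduction to the smooth Lipschitz case and the verification of the hypotheses of Proposition \ref{geodesis}) is implicit in the paper's terser version.
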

	\begin{proof}
		Let us suppose $f$ is smooth.
		As a consequence of Corollary \ref{R.-N.-p}, $p_f^*$ is $\mathcal{L}^*$-bounded. Because of Corollary \ref{eomega}, we also know that $e_{F^*\omega}$ is an $\mathcal{L}^*$-bounded operator. Finally $pr_{M,\star}$ is $\mathcal{L}^*$-bounded thanks to Proposition \ref{star}. Then $T_f$ is a composition of $\mathcal{L}^*$-bounded operators and so it is $\mathcal{L}^*$-bounded.
	\end{proof}
	\begin{cor}
		Given a uniform map $f:(M,g) \longrightarrow (N,h)$ between two Riemannian manifolds of bounded geometry, then $T_f(dom(d_{min})) \subseteq dom(d_{min})$ and $T_fd = dT_f$. In particular, this means that $T_f$ induces a morphism in $L^{q,p}$-cohomology. Moreover $T_f$ also induces a morphism between the reduced $L^{q,p}$-cohomology groups and the $L^{q,p}$-quotient cohomology groups. 
	\end{cor}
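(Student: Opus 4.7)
The plan is to reduce the statement to the commutation relation $d\,T_f = \pm T_f\, d$ on the dense subspace $\Omega^*_c(N)$ and then invoke Proposition \ref{boundedness} and Remark \ref{altra}/Corollary \ref{boundedness2}. The $\mathcal{L}^*$-boundedness of $T_f$ has just been established, so the only genuine work is the commutation.

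First I would reduce to the case in which $f$ itself is smooth and Lipschitz: by Proposition \ref{appr} we can pick a smooth Lipschitz approximation $f'\sim_\Gamma f$, and by definition $T_f:=T_{f'}$. So we may assume $f$ is smooth and Lipschitz. Next I would decompose $T_f= pr_{M,\star}\circ e_{F^*\omega}\circ p_f^*$ and check the three factors separately. The pullback $p_f^*$ strictly commutes with $d$. The operator $e_{F^*\omega}$ commutes with $d$ because $\omega$ is closed (being a Thom form), hence $F^*\omega$ is closed, and the Leibniz rule gives $d(\beta\wedge F^*\omega)=d\beta\wedge F^*\omega$. Finally, fiberwise integration $pr_{M,\star}$ satisfies $d\circ pr_{M,\star}=\pm pr_{M,\star}\circ d$ on forms with compact support in each fiber; since the support of $F^*\omega$ is contained in a $\delta_0$-neighborhood of the zero section of $f^*T^\delta N$, the form $e_{F^*\omega}(p_f^*\alpha)$ is compactly supported in each fiber of $pr_M$, so the formula applies. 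Composing the three identities yields $d\,T_f=\pm T_f\,d$ on smooth forms.

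In order to apply Proposition \ref{boundedness}, I would also verify that $T_f$ sends $\Omega^*_c(N)$ into $\Omega^*_c(M)$. If $\alpha\in\Omega^*_c(N)$ is supported in a compact $K\subset N$, the inclusion $p_f^{-1}(K)\subset \pi^{-1}(f^{-1}(B_C(K)))$ used already in Corollary \ref{R.-N.-p} shows that $p_f^{-1}(K)$ has bounded diameter; combined with completeness (Hopf–Rinow) and the fact that the fibers of $\pi:f^*T^\delta N\to M$ are disks of bounded radius, this preimage is relatively compact, and $pr_M$ then produces a compactly supported form on $M$. With this in hand, Proposition \ref{boundedness} applied with $B=T_f$ and $K=0$ gives $T_f(dom(d_{\min,q,p}))\subseteq dom(d_{\min,q,p})$ and $d_{\min,q,p}\circ T_f=\pm T_f\circ d_{\min,q,p}$, proving the morphism in unreduced $L^{q,p}$-cohomology. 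The reduced and quotient versions follow from Remark \ref{altra} (respectively Corollary \ref{boundedness2}), since an $\mathcal{L}^*$-bounded operator commuting with $d$ preserves closures of images.

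The main subtlety I foresee is the commutation of fiber integration with $d$: one must confirm that the standard formula for trivial bundles extends to the disk bundle $f^*T^\delta N\to M$ in the bounded-geometry setting, using a partition-of-unity argument over trivializing charts together with the vertical-compact-support hypothesis provided by the Thom form. Once this is accepted, the sign that appears is cohomologically irrelevant: it only affects the induced map by an overall sign and does not obstruct the passage to $H^z_{q,p}$, $\overline{H}^z_{q,p}$, or $T^z_{q,p}$. I would also remark that when $f$ is not smooth, the well-definedness of the induced morphism (independent of the approximation $f'$) is not shown here but will follow from the uniform-homotopy invariance of $T_f$ established in later sections.
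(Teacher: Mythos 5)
Your proposal is correct and follows essentially the same route as the paper: reduce to the smooth Lipschitz case, show that $T_f$ preserves compactly supported forms using the uniform properness of $p_f$ and completeness of $M$, verify the (sign-free, since $F^*\omega$ is closed) commutation $dT_f = T_f d$ on $\Omega^*_c(N)$ via the vertical compactness of $e_{F^*\omega}\circ p_f^*(\alpha)$ and Bott--Tu's fiber-integration formula, and conclude by Proposition \ref{boundedness} and Corollary \ref{boundedness2}. Your closing remark about independence of the choice of smooth approximation $f'$ is also apt --- the paper defers this to Point 3 of Proposition \ref{drgfin}.
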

	\begin{proof}
		Let us suppose, again, that $f$ is smooth and Lipschitz.
		In order to prove the Corollary we will prove that the operator $T_f$ satisfies the assumptions of Corollary \ref{boundedness2}. We already know that $T_f$ is $\mathcal{L}^*$-bounded. Then we just have to prove that $T_f(\Omega^*_c(N)) \subseteq \Omega^*_c(M)$ and $dT_f\alpha = T_f d \alpha$ for each $\alpha$ in $\Omega^*_c(N)$.
		\\
		Since $p_f$ is uniformly proper, for each smooth form $\alpha$ in $\mathcal{L}^p(N)$ 
		\begin{equation}
			diam(supp(e_{F^*\omega} \circ p_f^*(\alpha)) \leq C_\alpha.
		\end{equation}
		Because of $supp(pr_{M, \star}\beta) \subset pr_M(supp(\beta))$ for each smooth $\beta$ in $\mathcal{L}^p(f^*T^\delta N)$, we obtain that $supp(T_f \alpha)$ is bounded in $M$. This means that $supp(T_f \alpha)$ is compact since $(M,g)$ is complete and $T_f(\Omega_c(N)) \subseteq \Omega_c(M)$.
		\\
		\\Observe that $e_{F^*\omega} \circ p_f^*(\Omega^*_{c}(N)) \subseteq \Omega^*_{vc}(f^*(T^\delta N))$, where $\Omega^*_{vc}(f^*(T^\delta N))$ is the space of vertically compactly supported smooth forms with respect to the projection $\pi: f^*(T^\delta(M)) \longrightarrow M$. 
		\\By Proposition 6.14.1. of the book of Bott and Tu \cite{bottu}, $\pi_{\star}(\Omega^*_{vc}(f^*T^\delta N) \subseteq \Omega_c(M)$, we obtain
		\begin{equation}
		\int_{B^\delta} d \eta = d \int_{B^\delta} \eta,
		\end{equation}
		if $\eta$ is in $\Omega_{vc}^*(f^*T^\delta N)$. Then we conclude by applying Corollary \ref{boundedness2}.
	\end{proof}
	\begin{rem}
	 Let $f:(M,g) \longrightarrow (N,h)$ be a smooth uniform map between manifolds of bounded geometry. Let $\Gamma$ be a group acting FUPD on $(M,g)$ and $(N,h)$ by isometries and assume $f$ is $\Gamma$-equivariant.
	\\Consider the action of $\Gamma$ on $f^*TN$ defined as $\gamma \cdot (p, w_{f(p)}) := (\gamma \cdot p, d\gamma w_{f(p)})$. Because of Lemma \ref{tilde}, we know that $p_f$ is a $\Gamma$-equivariant map. This means that, if $\alpha$ is a $\Gamma$-equivariant differential form on $N$, then also $p_f^* \alpha$ is $\Gamma$-invariant. Moreover we also know that $F^*\omega$ is $\Gamma$-equivariant (Remark \ref{Thomgamma}). 
	\\Finally  $pr_{M, \star}: \Omega^*_{cv}(f^*TN) \longrightarrow \Omega^*(M)$ preserves the $\Gamma$-equivariance of differential forms. In order to prove this statement consider $X := M/\Gamma$ and $Y:= N /\Gamma$ and let $\tilde{f}: X \longrightarrow Y$ be the map induced by $f$. Denote by $r: M \longrightarrow X$ and $s:N \longrightarrow Y$ the covering maps. Observe that $f^*TN \cong r^*(\tilde{f}^*TY)$: this follows because $TN$ can be identified with $s^*TY$ and so $f^*TN \cong  f^*s^*TY \cong r^*\tilde{f}^*TY$. Let us denote by $pr_{X,\star}$ the integration along the fibers of $\tilde{f}^*TY$ and let $R:r^*\tilde{f}^*TY \longrightarrow \tilde{f}^*TY$ be the bundle map induced by $r$. Then we can conclude by applying Proposition VII of Chapter 5 in \cite{Conn} which allow us to say that
	\begin{equation}
	 pr_{M,\star} \circ R^* = r^* \circ pr_{X,\star}.
	\end{equation}
	Then $T_f = pr_M \circ e_{F^*\omega} \circ p_f^*$ is a $\Gamma$-equivariant operator.
	\end{rem}
\subsection{R.-N.-Lipschitz equivalences of vector bundles}
Let us consider two smooth Lipschitz maps $f_0, f_1:(M,g) \longrightarrow (N,h)$ between Riemannian manifolds and consider a vector bundle $E$ over $N$. Fix a group $\Gamma$ acting on $M$ and on $N$. Then it is a known fact that if $f_0 \sim_\Gamma f_1$ with a Lipschitz homotopy $H$, then
\begin{equation}
	f_0^*(E) \times [0,1] \cong f_1^*(E) \times [0,1] \cong H^*(E) \mbox{   and   } 	f_0^*(E) \cong f_1^*(E)
\end{equation} 
where with $\cong$ we mean that they are isomorphic as vector bundles. As a consequence of this fact we also have that they are homeomorphic as manifolds.
\begin{defn}
Consider two vector bundles $E$, $F$ over a manifold $N$. Let us suppose that $e$ is a Riemannian metric on $E$ and $f$ a Riemannian metric on $F$. Let us denote by $E^\delta$ and $F^\delta$ the $\delta$-neighborhood of the zero sections $0_E$ and $0_F$. The bundles $(E,e)$ and $(F,f)$ are \textbf{R.-N.-Lipschitz equivalent} if there is a bundle isomorphism $\phi: E \longrightarrow F$ such that
\begin{itemize}
    \item $\phi(E^{\delta}) = F^{\delta}$ for each $\delta > 0$,
    \item $\phi_{\vert E^{\delta}}$ and $\phi^{-1}_{\vert F^{\delta}}$ are R.N.-Lipschitz maps.
\end{itemize}
\end{defn}
\begin{rem}\label{ossbundle}
If $E$ and $F$ are R.-N.-Lipschitz equivalent, then if $A: (F^\delta, f) \longrightarrow (M,g)$ is an R.-N.-Lipschitz map, then $\phi^*A = A \circ \phi: (E^\delta, e) \longrightarrow (M,g)$ is again an R.-N.-Lipschitz map. This means that, up to isomorphisms, we can see $A$ as an R.N.-Lipschitz map from $(E^\delta, e)$ to $(M,g)$.
\end{rem}
\begin{prop}\label{bundlelem}
	Let us consider two Riemannian manifolds $(M,g)$ and $(N,l)$, where $(N,l)$ is a manifold of bounded geometry. Consider $f_0, f_1: (M,g) \longrightarrow (N,l)$ two smooth Lipschitz maps and let $H:(M \times [0,1], g + dt^2) \longrightarrow (N,l)$ be a smooth Lipschitz homotopy  between them. Suppose that for each $p$ in $M$ and for each $t_{1}, t_2$ in $[0,1]$
	\begin{equation}\label{bdist}
	 d(H(p,t_{1});H(p,t_2)) \leq \frac{inj_N}{4}.
	\end{equation}
	Let us denote by $g_{S,f_i}$ the Sasaki metric on $f_{i}^*(TN)$ defined by using $g$, $f^*_i(l)$ and $f^*_i(\nabla^{LC}_l)$  where $i = 0,1$.
	\\Then $(f^*_0(TN), g_{Sf_0})$ and $(f^*_1(TN), g_{S,f_1})$ are R.-N.-Lipschitz equivalent.
\end{prop}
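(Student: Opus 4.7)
The natural candidate for the bundle isomorphism is fiberwise parallel transport along the homotopy curve. For each $p \in M$ let $\gamma_p : [0,1] \longrightarrow N$ be $\gamma_p(t) := H(p,t)$. Hypothesis \eqref{bdist} guarantees that $\gamma_p$ is contained in a normal neighborhood of $f_0(p)$, so parallel transport along $\gamma_p$ with respect to $\nabla^{LC}_l$ defines a linear isometry $T_p : (T_{f_0(p)}N, l_{f_0(p)}) \longrightarrow (T_{f_1(p)}N, l_{f_1(p)})$. Set
\begin{equation}
\phi : f_0^*TN \longrightarrow f_1^*TN, \qquad \phi(p, w_{f_0(p)}) := (p, T_p(w_{f_0(p)})).
\end{equation}
Because each $T_p$ is a fiberwise linear isometry with respect to $f_0^*l$ and $f_1^*l$, it carries the $\delta$-disk onto the $\delta$-disk for every $\delta > 0$; the inverse $\phi^{-1}$ is parallel transport along the reversed curve $t \mapsto H(p, 1-t)$ and has the same structure, so it suffices to verify the R.-N.-Lipschitz property for $\phi$.

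First I would check the Lipschitz estimate. Working in a chart $\{U, x^i\}$ on $M$ together with normal coordinates $\{V, y^j\}$ on $N$ around $f_0(p)$ chosen (using \eqref{bdist}) so that $\gamma_p([0,1]) \subset V$, the map $\phi$ reads $(x, \mu) \longmapsto (x, T(x)\mu)$, where $T(x)$ is obtained by integrating the parallel transport ODE
\begin{equation}
\dot{T}^j{}_l(t;x) + \Gamma^j_{ik}(\gamma_p(t))\,\dot{\gamma}_p^{\,i}(t)\, T^k{}_l(t;x) = 0, \qquad T(0;x) = \mathrm{Id},
\end{equation}
and setting $T(x) := T(1;x)$. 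Bounded geometry of $(N,l)$ yields uniform $C^k_b$-bounds on the Christoffel symbols $\Gamma^j_{ik}$ in normal coordinates, while the smooth Lipschitz property of $H$ provides uniform bounds on $\dot{\gamma}_p = \partial_t H(p,\cdot)$ and on its derivatives in $x$. A standard Grönwall argument then gives uniform bounds on $T(x)$ and its first derivatives in $x$. Plugging these into the explicit formulas \eqref{metri} and \eqref{invmetri} for the Sasaki metrics $g_{S,f_0}$ and $g_{S,f_1}$ on the disk bundles $f_i^*(T^\delta N)$, where $|\mu| \leq \delta$ keeps the $\mu$- and $\mu^2$-terms under control, one obtains a uniform bound on the operator norm of $d\phi$, so $\phi$ is Lipschitz on $f_0^*(T^\delta N)$ for every $\delta > 0$.

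Finally, the Fiber Volume of $\phi$ reduces, via Remark \ref{oss}, to $|(\phi^{-1})^* (Vol_{f_0^*T^\delta N} / \phi^* Vol_{f_1^*T^\delta N})|$, which in the above coordinates equals a ratio of square roots of determinants of the Sasaki-metric Gram matrices multiplied by $|\det T(x)| = 1$, since $T(x)$ is orthogonal with respect to $f^*l$. Both Sasaki determinants are bounded above and below by the same argument as in the Lipschitz step, so $\phi$ has bounded Fiber Volume; the identical reasoning applied to the reversed homotopy handles $\phi^{-1}$. The main obstacle I foresee is purely bookkeeping: the Sasaki coefficients carry factors that are quadratic in $\mu$, so one must exploit the restriction to the disk bundle of finite radius $\delta$ and propagate the bounded-geometry estimates of $(N,l)$ through the pullbacks $f_i^*l$, $f_i^*\nabla^{LC}_l$ and the Grönwall bounds for $T(x)$ to obtain uniform constants independent of $p$.
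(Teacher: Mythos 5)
Your approach — conjugation by parallel transport along the homotopy tracks $\gamma_p(t) = H(p,t)$ — is genuinely different from the paper's, which follows Hatcher's step-by-step construction: a locally finite Lipschitz partition of unity $\{\phi_i\}$ on $M$, graph maps $\psi_i = \phi_1 + \cdots + \phi_i$, and a composition $h = h_1 \circ h_2 \circ \cdots$ of local isomorphisms, each identifying fibers via the radial parallel frames $\{s_r\}$ on normal balls in $N$ (not via transport along $H$). Your construction is more geometrically direct and avoids the infinite composition, but there is a gap in the Lipschitz estimate: you assert that the Lipschitz property of $H$ bounds "$\dot\gamma_p = \partial_t H(p,\cdot)$ \emph{and its derivatives in} $x$." That second claim is false — $\partial_x \dot\gamma_p = \partial_x \partial_t H$ is a second mixed derivative, which a merely Lipschitz $H$ does not control. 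The Grönwall argument you invoke for $\partial_x T(x)$ differentiates the transport ODE in $x$, and the forcing term genuinely contains $\Gamma^j_{ik}(\gamma_p)\,\partial_x\dot\gamma_p^{\,i}$; without a bound on $\partial_x\partial_t H$ this is not controlled. The paper's construction sidesteps this: in Hatcher's $h_i$ the fiber identification is written as $\mu^w \mapsto A^j_r(\tilde f_i(x))B^r_w(\tilde f_{i+1}(x))\mu^w$, whose $x$-derivative hits $A,B$ (bounded on $N$ by bounded geometry, \`a la Schick) and $\tilde f_i, \tilde f_{i+1}$ (whose first derivatives are controlled by $\mathrm{Lip}(H)$ and $\mathrm{Lip}(\psi_i)$); no second derivative of $H$ appears.

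Your route can be rescued in two ways. The cheap fix is to strengthen the hypothesis to $H$ being $C^2_b$ (which is what one has in practice from Corollary \ref{C^k_bhomo}), at the cost of not proving the statement as literally written. The more satisfying fix is to replace the Grönwall-in-$x$ estimate by a holonomy comparison: for nearby $p,p'$, the discrepancy between $T_p$ and $T_{p'}$ (after conjugating by short-geodesic transports between $f_i(p)$ and $f_i(p')$) is the holonomy around a thin quadrilateral of area $\lesssim \mathrm{Lip}(H)^2 \, |p-p'|$, hence bounded by $\|R\| \cdot \mathrm{Lip}(H)^2 \, |p-p'|$ using only curvature bounds and first derivatives of $H$. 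Separately, the remark that $|\det T(x)| = 1$ "since $T(x)$ is orthogonal with respect to $f^*l$" is not quite right in the coordinate frame $\{\partial/\partial y^j\}$, which is not $l$-orthonormal; what one actually gets is $\det T(x) = \pm\sqrt{\det l(f_0(p))/\det l(f_1(p))}$, which is still uniformly bounded above and below by bounded geometry, so the Fiber Volume conclusion survives after this correction.
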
 
\begin{proof}
We already know that $f^*_0TN$ and $f^*_1TN$ are homeomorphic. In particular, we consider the homeomorphism $h: f^*_1TN \longrightarrow f^*_0TN$ introduced in Proposition 1.7 of the book of Hatcher \cite{AT}. Before to introduce this homeomorphism we need a specific cover $\{U_i\}$ for $M \times [0,1]$ and some specific orthonormal frames of $H^*TN$ on $U_i$. In his proof, indeed, Hatcher uses a generic cover $\{U_i\}$ and some generic frames of $H^*TN$, but, in our setting, we need $U_i$ and some frames which satisfy some further conditions.
\\Let us start by the cover.
\\Fix two $N$-small numbers $\delta_1$ and $\delta_2$ such that $\delta_2 \leq \frac{inj_N}{2}$ and $\delta_1 \leq \delta_2 - \frac{inj_N}{4}$. By Lemma 2.16 of \cite{bound} we know that there is a cover $\{B_{\delta_1}(q_i)\}$ of $N$ such that a ball $B_{\delta_2}(x)$ in $N$ intersects at most $K$ balls $\{B_{\delta_2}(q_i)\}$. Consider the cover of $M \times [0,1]$ given by $\{H^{-1}(B_{\delta_1}(q_i))\}$. Observe that
\begin{equation}\label{inclu}
H^{-1}(B_{\delta_1}(q_i)) \subseteq \pi_0^{-1}(\pi_0(H^{-1}(B_{\delta_1}(q_i)))) \subseteq H^{-1}(B_{\delta_2}(q_i))
\end{equation}
where $\pi_0: M \times [0,1] \longrightarrow M$ is the projection on the first component.
The first inclusion of (\ref{inclu}) is obvious. The second one is a consequence of (\ref{bdist}), indeed if $p$ is in $\pi(H^{-1}(B_{\delta_1}(q_i)))$ then there is a $t_1$ such that $H(p,t_1)$ is in $B_{\delta_1}(q_i)$. Then for all $t_2$ in $[0,1]$ the inequality (\ref{bdist}) holds. This implies that
\begin{equation}
d(H(p,t_2); q) \leq d(H(p,t_{1});H(p,t_2)) + d(H(p,t_{1}), q) \leq \frac{inj_N}{4} + \delta_1 \leq \delta_2
\end{equation}
and so $(p,t_2)$ is in $H^{-1}(B_{\delta_2}(q_i))$ for each $t_2$ in $[0,1]$. Let us define
\begin{equation}
U_i := \pi_0(H^{-1}(B_{\delta_1}(q_i))).
\end{equation}
Let us consider the cover $\{U_i \times [0,1]\}$. Observe that $H(U_i \times [0,1]) \subseteq B_{\frac{inj_N}{2}}(q_i)$ for each $i$. Moreover there is a number $K$ such that for each $i$ the intersection $(U_i \times [0,1]) \cap (U_j \times [0,1])$ is not empty.
\\Let us introduce our orthonormal frame of $TN$ on $B_{\frac{inj_N}{2}}(q_i)$.
\\Consider for each $i$ some normal coordinates $\{y^j\}$ around $q_i$. Fix on $T_{q_i}N$ the orthonormal basis $\{s_j(q_i)\}$ where $s_j(q_i) := \frac{\partial}{\partial y^j}(q_i)$. Using parallel transport along geodesics emanating from $q_i$, we obtain a frame $\{s_r\}$. Let us denote by $\{\theta^r\}$ the dual frame of $\{s_r\}$ and let $a^r_j,b^r_j: B_{\frac{inj_N}{2}}(q_i) \longrightarrow \mathbb{R}$ be the function defined by
\begin{equation}
    \theta^s(y) := a^s_j(y)dy^j \mbox{     and    }  dy^s := b^s_j(y) \theta^j.
\end{equation}
As shown on pages 6-9 of \cite{flow}, for each $k$ in $\mathbb{N}$, the partial derivatives of order $k$ of $\vert a^i_j(y) \vert$ and of $\vert b^i_j(y) \vert$ are uniformly bounded by a constant $C_k$ which does not depends on $B_{\frac{inj_N}{2}}(q_i)$ or on $\{y^j\}$. Consider the Riemannian metric $l$ on $N$. Let us define $A^s_j := l^{sr}a_r^ml_{mj}$ and $B^s_j := l_{sr}b^r_ml^{mj}$. Then
\begin{equation}
    s_r(y) = A^w_r(y)\frac{\partial}{\partial y^w}(y) \mbox{     and    }  \frac{\partial}{\partial y^w}(y) := B^r_w(y)  s_r(y).
\end{equation}
Observe that also the partial derivatives of order $k$ of $\vert A^i_j(y) \vert$ and of $\vert B^i_j(y) \vert$ are uniformly bounded.
\\The frame $\{s_j\}$ can be used to identify $\pi: H^*TN \longrightarrow M \times [0,1]$ on $U_i \times [0,1]$ and $U_i \times [0,1] \times \mathbb{R}^n$. Indeed we can identify $(p, t, \mu^j s_j(H(p,t)))$ in $\pi^{-1}(U \times [0,1])$ with $(p, t, \mu^1, ..., \mu^n)$ in $U \times [0,1] \times \mathbb{R}^n$.
\\
\\Let us introduce the isomorphism $h$. By Lemma 2.17 of \cite{bound}, there is a partition of unity $\{\xi\}$ of $N$ referred to the cover $\{B_{\delta_1}(q_i)\}$ where each $\xi$ is a smooth Lipschitz function. Consider $\{\phi_i\}$ where $\phi_i := f^{*}_0(\xi)$. We know that $f_0^{-1}(B_{\delta_1}(q_i)) \subseteq \pi_0(H^{-1}(B_{\delta_1}(q_i)) = U_i$. This means that $\{\phi_i\}$ is a smooth Lipschitz partition of unity subordinate to $U_i$. Let $\psi_i: M \longrightarrow [0,1]$ be 
\begin{equation}
\psi_i := \phi_1 + \phi_2 + ... + \phi_i.
\end{equation}
Let $M_i \subset M \times [0,1]$ be the graph of $\psi_i$ and let $\pi_i: E_i \longrightarrow M_i$ be the restriction of $H^*TN$ to $M_i$. Since $H^*TN$ is trivial on $U_i \times [0,1]$, then homeomorphism $e_i: M_i \longrightarrow M_{i+1}$ defined as $e_i(p, \psi_i(p)) := (p, \psi_{i-1}(p))$ lifts to a homeomorphism $h_i: E_i \longrightarrow E_{i-1}$. This homeomorphism is the identity outside $\pi^{-1}(U_i \times [0,1])$ and on $p^{-1}(U_i \times [0,1])$ it is defined by
\begin{equation}
h_i(p, \psi_i(p), v) := (p, \psi_{i-1}(p), v).
\end{equation}
Then homeomorphism $h$ is given by the composition
\begin{equation}
h :=  h_1 \circ h_2 \circ h_3 \circ ....
\end{equation}
We already know from Proposition 1.7 of \cite{AT} that $h$ is an isomorphism of vector bundles and so, in particular, a diffeomorphism. \\Fix on $M \times [0,1]$ the metric $g + dt^2$. Let us consider the maps $\tilde{f}_i: M \longrightarrow N$ defined as $\tilde{f}_i(p) := H(p, \psi_i(p))$ and consider the Sasaki metric $g_{S,\tilde{f}_i}$ on $E_i$ induced by the metric $g$ on $M$, the pullback metric $\tilde{f}_i^*l$ and the pullback connection $\tilde{f}_i^*\nabla^{Lc}_l$.
Observe that outside $\pi^{-1}(U_i \times [0,1])$ the map $h_i: (\tilde{f}^*_{i}TN, g_{S,f_i}) \longrightarrow (\tilde{f}^*_{i-1}TN, g_{S,f_{i-1}})$ is an isometry. This follows because $\tilde{f}_{i-1} = \tilde{f}_i$ outside $\pi^{-1}(U_i \times [0,1])$. Then the norm of $dh_{i}$ and the Fiber Volume of $h_i$ are both equal to $1$ outside $\pi^{-1}(U_i \times [0,1])$. Let us consider $h_i$ on $\pi^{-1}(U_i \times [0,1])$.
\\
\\Consider $p$ in $U_i$. Recall that we have on $B_{\frac{inj_N}{2}}(q_{i+1})$ some normal coordinates $\{y^z\}$. Fix a constant $K$ such that $ K > \sup \{1, \vert \vert dH_(q,t) \vert \vert\}$ for each $(q,t)$ in $M \times [0,1]$. Choose some normal\footnote{With respect to $g$} coordinates $\{W, x^j\}$ around $p$ such that in $W \times [0,1]$ the norms of the derivatives of $H: W \times [0,1] \longrightarrow B_{\frac{inj_N}{2}}(q_{i+1})$ and the norm of the Gram matrix of $g$ with respect to $\{x^i\}$ are uniformly bounded by $K$. Let $\{x^j, \mu^z\}$ be the fibered coordinates on $\tilde{f}_{i+1}^*(TN)_{\vert U_{i+1}}$ related to the frame $\{\frac{\partial }{\partial y^z}\}$. On the other hand we can also define the fibered coordinates $\{x^j, \nu^z\}$ on $\tilde{f}_{i+1}^*(TN)_{\vert U_{i+1}}$ related to the frame $\{s_j\}$.
\\In the same way we obtain $\{x^j, \sigma^z\}$ which are the fibered coordinates on $\tilde{f}_{i}^*(TN)$ related to the frame $\{\frac{\partial }{\partial y^j}\}$ and the coordinates $\{x^j, \tau^z\}$ which are the fibered coordinates related to the frame $\{s_j\}$. 
\\With respect to the coordinates related to the frame $\{s_j\}$ we have $h_i(x, \nu) = (x, \nu)$, while, with respect to the coordinates related to $\{\frac{\partial }{\partial y^j}\}$, we obtain
\begin{equation}
h_{i+1}(x, \mu) = (x^j, A_r^j(\tilde{f}_{i}(x))B_w^r(\tilde{f}_{i+1}(x))\mu^w).
\end{equation}
The norms of $\frac{\partial }{\partial x^i}$ and $\frac{\partial }{\partial \mu_j^m}$ with respect to $g_{S,\tilde{f}_{i+1}}$ and the norms of $\frac{\partial }{\partial x^i}$ and $\frac{\partial }{\partial \sigma_j^m}$ with respect to $g_{S,\tilde{f}_{i}}$ are uniformly bounded by a constant $L$ which does not depend on $i$ (this is a consequence of (\ref{metri})). Then, since the norms of the derivatives of $A_r^j$ and $B_w^r$ are uniformly bounded by a constant $C$, we obtain that the Lipschitz constant of $h_i$ is less or equal to $C^2\cdot L$.
\\Let us focus on the Fiber Volume of $h_i$ on $U_i$.
\\Observe that, since $h_i(x, \nu) = (x, \nu)$,
\begin{equation}
\begin{split}
Vol_{h_{i+1}}(x, \nu) &= h_{i+1}^* \sqrt{\frac{det(G_{rj})}{det(L_{jk})}}(x, \nu) \\
&=\sqrt{det(G_{rj})det(L_{jk})^{-1}}(x, \nu).
\end{split}
\end{equation}
where $G_{rj}$ is the matrix related to the metric $g_{S,f^*_{i+1}}$ with respect to the coordinates $\{x, \nu \}$ and $L_{jk}$ is the matrix related to $g_{S,f^*_{i}}$ with respect to the coordinates $\{x, \tau \}$. Observe that
\begin{equation}
det(G_{rj})(x, \nu) = det(\hat{G}_{rj}))(A(x, \nu)) \cdot det(J_A(x, \nu))^2,
\end{equation}
where $A$ is the change of coordinates from $(x, \nu)$ to $(x, \mu)$, $J_A$ is its Jacobian and $\hat{G}_{rj}$ is the matrix related to the metric $g_{S,\tilde{f}^*_{i+1}}$ with respect to the coordinates $\{x, \mu \}$. Define
\begin{equation}
A(x, \nu) := (x^l, A^r_j(f_{i}(x))\nu^j).
\end{equation}
Notice that the determinant of $\hat{G}_{kr}$ is uniformly bounded on each $\delta$-neighborhood of the $0$-section of $f^*_{i+1}TN$ (this is a consequence of Formula (\ref{metri})). Then
\begin{equation}
\vert det(G_{rj})(x, \nu) \vert = \vert det(\hat{G}_{kr}))(A(x, \nu)) \vert \cdot \vert det(J_A(x, \nu)) \vert ^2 \leq J(d_{\tilde{f}_{i+1}TN}((x, \mu), 0_{f^*_{i+1}TN})
\end{equation}
where $J: \mathbb{R} \longrightarrow  \mathbb{R} $ is a function which does not depend on $i$.
\\Let us focus on $det(L_{jk})^{-1}$. We have
\begin{equation}
det(L_{rj})^{-1}(x, \tau) = det(\hat{L}_{kr}))^{-1} (\overline{A}(x, \tau)) \cdot det(J_{\overline{B}}(x, \tau))^2,
\end{equation}
where $\overline{A}$ is the change of coordinates from $(x, \tau)$ to $(x, \sigma)$, $J_{\overline{B}}$ is the Jacobian of its inverse and $\hat{L}_{kr}$ is the matrix related to the metric $g_{S,f^*_{i}}$ with respect to the coordinates $\{x, \sigma\}$. Let us define
\begin{equation}
A(x, \nu) := (x^l, A^r_j(\tilde{f}_{i}(x))\tau^j).
\end{equation}
and 
\begin{equation}
A(x, \nu)^{-1} := (x^l, B^r_j(\tilde{f}_{i}(x))\sigma^j).
\end{equation}
Notice that the determinant of $\hat{L}_{kr}^{-1}$ is uniformly bounded on each $\delta$-neighborhood of the $0$-section of $f^*_i(TN)$ (this is a consequence of Formula (\ref{invmetri})). Then
\begin{equation}
\vert det(L_{ij})^{-1}(x, \tau) \vert = \vert det(\hat{L}_{kr})^{-1} (\overline{A}(x, \tau))\vert \cdot \vert det(J_{\overline{B}}(x, \tau))\vert ^2 \leq F(d_{\tilde{f}_i}((x, \mu), 0_{f^*_{i+1}}),
\end{equation}
where $F: \mathbb{R} \longrightarrow  \mathbb{R} $ is a function which does not depend on $i$.
This means that for each $\delta$ the Fiber Volume of $h_{i \vert_{\pi_i^{-1}(U_i)}}$ is bounded in each $\delta$-neighborhood of the $0$-section of $f^*_{i}TN$ by a constant $C(\delta)$.
	\\Then, in order to conclude the proof, we just have to observe that, since for each $p$ in $M$ there are at most $K$ of $U_i$'s such that $p \in U_i$, then the Lipschitz constant of $h$ is bounded by $C^{2K}\cdot L^K$ and the Fiber Volume of $h$ is uniformly bounded by $C(\delta)^K$. The same also happens for $h^{-1}$ and so $f^*_0(TN)$ and $f^*_1(TN)$ are R.N.-Lipschitz equivalent.
\end{proof}
\begin{prop}\label{equiv}
	Let us consider $f_0, f_1: (M,g) \longrightarrow (N,h)$ two smooth Lipschitz maps such that $f_0$ and $f_1$ are Lipschitz-homotopic. Then $(f^*_0(TN), g_{S,0})$ and $(f^*_1(TN), g_{S,1})$ are R.-N.-Lipschitz equivalent.
\end{prop}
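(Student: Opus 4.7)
The plan is to reduce to Proposition \ref{bundlelem} by subdividing the Lipschitz homotopy into finitely many short segments, each of which automatically satisfies the displacement condition (\ref{bdist}), and then composing the resulting R.-N.-Lipschitz equivalences. Let $H: (M \times [0,1], g+dt^2) \longrightarrow (N,h)$ denote a smooth Lipschitz homotopy between $f_0$ and $f_1$, of Lipschitz constant $C_H$; if the given homotopy is only Lipschitz, one would first pass to a smooth Lipschitz approximation by the mollification used in Corollary \ref{C^k_bhomo}.

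I would choose $K \in \mathbb{N}$ so that $C_H/K \leq inj_N/4$, set $t_i := i/K$ for $i=0,\ldots,K$, and introduce the time slices $f^{(i)} := H(\,\cdot\,, t_i)$. Since $H$ is smooth Lipschitz, each $f^{(i)}$ is a smooth Lipschitz map, and $f^{(0)}=f_0$, $f^{(K)}=f_1$. Next, define the rescaled homotopies
\begin{equation}
H_i(p, s) := H(p, t_i + s(t_{i+1} - t_i)), \qquad i = 0, \ldots, K-1,
\end{equation}
each a smooth Lipschitz homotopy from $f^{(i)}$ to $f^{(i+1)}$ satisfying
\begin{equation}
d(H_i(p, s_1), H_i(p, s_2)) \leq C_H (t_{i+1} - t_i) |s_1 - s_2| \leq C_H / K \leq \frac{inj_N}{4}
\end{equation}
for all $p \in M$ and $s_1, s_2 \in [0,1]$. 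Hence hypothesis (\ref{bdist}) of Proposition \ref{bundlelem} is satisfied by each $H_i$, producing R.-N.-Lipschitz bundle equivalences $\Phi_i: (f^{(i+1)*}TN, g_{S, f^{(i+1)}}) \longrightarrow (f^{(i)*}TN, g_{S, f^{(i)}})$.

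The required equivalence will be the composition $\Phi := \Phi_0 \circ \Phi_1 \circ \cdots \circ \Phi_{K-1}: (f_1^*TN, g_{S,1}) \longrightarrow (f_0^*TN, g_{S,0})$, which is a bundle isomorphism since each $\Phi_i$ is. Each $\Phi_i$ sends the $\delta$-disk subbundle to the $\delta$-disk subbundle for every $\delta > 0$, so the composition does as well. The fact that both $\Phi$ and $\Phi^{-1}$ are R.-N.-Lipschitz on every disk subbundle follows from Remark \ref{compo2} on composition of R.-N.-Lipschitz maps.

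The main obstacle is of a bookkeeping nature: the Lipschitz constants and Fiber Volume bounds arising from Proposition \ref{bundlelem} accumulate multiplicatively in the number $K$ of sub-intervals (roughly $(C^2 L)^K$ and $C(\delta)^K$ in the notation of that proposition). One must verify that $K$ can be fixed uniformly in terms of $C_H$ and $inj_N$ only, which is true for the explicit choice $K = \lceil 4 C_H / inj_N \rceil$. A secondary technical point is that smoothness of $H$ is needed so that the time-slices $f^{(i)}$ enter Proposition \ref{bundlelem} as smooth Lipschitz maps; this is handled by a standard mollification argument.
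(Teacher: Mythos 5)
Your proof is correct and takes essentially the same route as the paper's: subdivide the time interval into finitely many pieces short enough that the displacement bound (\ref{bdist}) holds on each, apply Proposition \ref{bundlelem} to each resulting sub-homotopy, and compose the resulting R.-N.-Lipschitz equivalences via Remark \ref{compo2}. The paper states this more tersely; your extra remarks on mollifying a merely Lipschitz homotopy and on the multiplicative accumulation of Lipschitz and Fiber-Volume bounds over the finitely many pieces are sensible clarifications but do not alter the argument.
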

\begin{proof}
	Let us consider a smooth Lipschitz homotopy $H$ between $f_0$ and $f_1$. Consider a finite partition  $\{[s_i, s_{i+1}]\}$ of $[0,1]$ such that $s_{i+1} - s_i \leq \frac{inj_N}{4C_{H}}$, where $C_H$ is the Lipschitz constant of $H$. Let us define the maps $H_{s}: (M,g) \longrightarrow (N,h)$ as $H_i(p) := H(p,s_i)$ then we can observe that
	\begin{equation}
	d(H_{i}(p), H_{i+1}(p)) \leq d(H(p, s_i),(p, s_{i+1})) \leq C_H\cdot(s_{i+1} - s_i) = \frac{inj_N}{4}.
	\end{equation}
Then it is sufficient to apply the previous Lemma to $H_i$ and $H_{i+1}$ and observe that a composition of R.-N.-Lipschitz map is a R.-N.-Lipschitz map.
\end{proof}
\begin{cor}\label{equivhomo}
Consider a Lipschitz map $H: (M \times [0,1], g + dt^2) \longrightarrow (N,h)$. Suppose that $g$ satisfies the same assumption as Proposition \ref{bundlelem}. For each $i$ in $[0,1]$, denote by $f_i:(M,g) \longrightarrow (N,h)$ the map defined as $f_i(p) := H(p,i)$. Assume that there is an $\varepsilon$-neighborhood of $i$ in $[0,1]$ such that $H(x, t) = f_i(x)$ on this neighborhood. Then the vector bundles $(H^*(TN), g_{S,H})$ and $(f^*_i(TN) \times [0,1], g_{Sf_i} + dt^2)$ are R.-N.-Lipschitz equivalent.
\end{cor}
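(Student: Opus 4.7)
The plan is to realize $f_i^*TN\times[0,1]$ as the pullback bundle of $TN$ along a suitable constant-in-$t$ map and then to invoke Proposition \ref{equiv}. Define $\tilde{f}_i:(M\times[0,1], g+dt^2)\longrightarrow(N,h)$ by $\tilde{f}_i(p,t):=f_i(p)$. Since $\tilde{f}_i=f_i\circ \mathrm{pr}_M$ factors through the projection, it is Lipschitz with the same constant as $f_i$, and its pullback bundle $\tilde{f}_i^*TN$ is canonically identified, as a smooth vector bundle, with $f_i^*TN\times[0,1]$.

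I would first verify that this canonical identification is actually an isometry between $(\tilde{f}_i^*TN,g_{S,\tilde{f}_i})$ and $(f_i^*TN\times[0,1],g_{Sf_i}+dt^2)$. Using the formula $\tilde{\Gamma}^\alpha_{\beta j}=(\partial\tilde{f}_i^l/\partial x^j)f_i^*(\Gamma^\alpha_{\beta l})$ from the Remark following Example \ref{threes}, the fact that $\tilde{f}_i$ is constant in $t$ forces $\tilde{\Gamma}^\alpha_{\beta t}=0$. Substituting into (\ref{metri}) then shows that every component of $g_{S,\tilde{f}_i}$ involving a $t$-index either vanishes (for the cross terms) or equals $1$ (for the $(t,t)$ entry), while the remaining components reproduce $g_{Sf_i}$ exactly.

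Next, I would construct a Lipschitz homotopy $G:(M\times[0,1])\times[0,1]\longrightarrow(N,h)$ between $H$ and $\tilde{f}_i$ by setting
\begin{equation}
G((p,t),s):=H(p,\phi(t,s)),
\end{equation}
where $\phi:[0,1]^2\to[0,1]$ is a smooth Lipschitz function with $\phi(t,0)=t$ and $\phi(t,1)=i$, chosen so that $\phi(t,s)$ lies inside the $\varepsilon$-neighborhood of $i$ on which $H$ coincides with $f_i$ as soon as $s$ is close to $1$. Composition with the Lipschitz map $H$ yields a Lipschitz homotopy with $G((p,t),0)=H(p,t)$ and $G((p,t),1)=H(p,i)=f_i(p)=\tilde{f}_i(p,t)$. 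The $\varepsilon$-neighborhood hypothesis is precisely what allows $\phi$ to be chosen so that $G$ is locally constant in $s$ near $s=1$, which guarantees the smooth compatibility of the homotopy with the product structure of the target bundle.

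Finally, applying Proposition \ref{equiv} to the smooth Lipschitz maps $H,\tilde{f}_i: M\times[0,1]\to N$ (if $H$ is only Lipschitz, first smooth it via Corollary \ref{corappr} while leaving it unchanged on the $\varepsilon$-neighborhood of $i$) yields an R.-N.-Lipschitz equivalence between $(H^*TN,g_{S,H})$ and $(\tilde{f}_i^*TN,g_{S,\tilde{f}_i})$. Composing this equivalence with the isometric identification of the second paragraph produces the claimed R.-N.-Lipschitz equivalence with $(f_i^*TN\times[0,1], g_{Sf_i}+dt^2)$. The main technical obstacle is the algebraic verification in the second step that the pullback Sasaki metric of $\tilde{f}_i$ equals the product metric on the nose; although this reduces to plugging the vanishing of $\tilde{\Gamma}^\alpha_{\beta t}$ into (\ref{metri}), one has to keep track of the index conventions carefully. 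A secondary concern is ensuring that the hypotheses of Proposition \ref{equiv} are met, which is handled by the smoothing remark above.
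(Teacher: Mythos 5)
Your argument follows the paper's own route: define the constant-in-time map $\tilde{f}_i(p,t) := f_i(p)$, check that the canonical identification of $\tilde{f}_i^*TN$ with $f_i^*TN \times [0,1]$ is an isometry for the respective Sasaki metrics, build a Lipschitz homotopy between $H$ and $\tilde{f}_i$, and apply Proposition \ref{equiv}. One small discrepancy: the paper simply uses the explicit linear reparametrization $\mathcal{H}(p,s,t) := H(p,\, i t + s(1-t))$ and does not require the homotopy to be locally constant near $s=1$; that constancy (and the $\varepsilon$-neighborhood hypothesis you invoke for it) is really only exploited in the subsequent unlabeled corollary about restricting $\Phi$ to the two ends, whereas your smoothing remark addressing the fact that $H$ is only assumed Lipschitz is a genuine clarification the paper leaves implicit.
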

\begin{proof}
Let us define the map $\overline{f}_i: M \times [0,1] \longrightarrow N$ as $\overline{f}_{i}(p,s) := f_i(p)$. Observe that the Sasaki metric on $\overline{f}_i^*(TN)$ defined by using $g$, $\overline{f}^*_i(h)$ and $\overline{f}^*_i(\nabla^{LC}_h)$ is the product metric between the Sasaki metric on $f^*_i(TN)$ and the metric $dt^2$ on $[0,1]$.
\\Then we can conclude by observing that the map
\begin{equation}
\mathcal{H}: (M \times [0,1] \times [0,1], g + ds^2 + dt^2) \longrightarrow (N,h)
\end{equation}
defined as $\mathcal{H}(p,s,t) := H(p, i \cdot t + s \cdot (1-t))$ is a Lipschitz-homotopy between $H$ and $\overline{f}_i$.
\end{proof}
\begin{cor}
Consider two smooth Lipschitz maps $f_0, f_1: (M,g) \longrightarrow (N,h)$, assume that $g$ satisfies the same assumptions of Lemma \ref{bundlelem}. Suppose that $f_0$ and $f_1$ are Lipschitz-homotopic with a smooth Lipschitz homotopy $H$. Assume that there are two $\varepsilon$-neighborhoods $U_0$ and $U_1$ of $M\times \{0\}$ and $M \times \{1\}$ such that $H(p,t) = f_i(p)$ for each $(p,t)$ in $U_i$. Let us denote by 
\begin{equation}
\Phi: (f_0^*(TN) \times [0,1], g_{S,f_0} + dt^2) \longrightarrow (H^*TN, g_{SH})
\end{equation}
the R.N.-Lipschitz equivalence of vector bundles. Then the restriction 
\begin{equation}
\phi := \Phi_{\vert_{f_0^*(TN) \times \{0\}}}: (f_0^*(TN), g_{S,f_0}) \longrightarrow (f_1^*(TN), g_{S,f_1}).
\end{equation}
is a R.N.-Lipschitz equivalence of vector bundles.
\end{cor}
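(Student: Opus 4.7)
My plan is to obtain $\phi$ as a slice of $\Phi$ and to inherit the R.-N.-Lipschitz properties from $\Phi$, exploiting the hypothesis that $H$ is stationary in $t$ near $M \times \{0\}$ and $M \times \{1\}$, which will force $g_{SH}$ to be a product metric near these slices.

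First I would exploit the hypothesis $H(p,t) = f_i(p)$ on $U_i$. On the preimage of $U_i$ in $H^*TN$ both the pullback metric $H^*h$ and the pullback connection $H^*\nabla^{LC}_h$ are independent of $t$ and coincide with the corresponding data from $f_i$, extended trivially in the $t$-direction. Reading off the Sasaki metric from formula \eqref{metri} then gives $g_{SH} = g_{S,f_i} + dt^2$ on this preimage. Consequently the slice $H^*TN|_{M \times \{i\}}$ with its induced metric is canonically isometric to $(f_i^*TN, g_{S,f_i})$, and both slice inclusions (in the domain and in the codomain of $\Phi$) are isometric embeddings.

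Next, because $\Phi$ is a smooth vector bundle isomorphism covering the identity of $M \times [0,1]$, its restriction to any $M \times \{t_0\}$ is a vector bundle isomorphism between the corresponding slices; restricting at the slice where $H^*TN$ equals $f_1^*TN$ and composing with the canonical identification of the domain slice with $f_0^*TN$ yields $\phi$ as stated. The compatibility $\phi(f_0^*T^\delta N) = f_1^*T^\delta N$ is inherited from the corresponding compatibility of $\Phi$ established in Corollary \ref{equivhomo}.

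Finally, I would transfer the R.-N.-Lipschitz data slice-by-slice. Since the slice embeddings are isometric, the Lipschitz constants of $\phi$ and $\phi^{-1}$ are bounded by those of $\Phi$ and $\Phi^{-1}$. For the Fiber Volume, the product structure from the first step gives $Vol_{H^*TN} = Vol_{f_i^*TN} \wedge dt$ near the slice, and an analogous factorization on the domain; the Jacobian of $\Phi$ near the slice therefore splits as the Jacobian of $\phi$ times a $\partial_t$-component, so bounded Fiber Volume of $\Phi$ descends to bounded Fiber Volume of $\phi$, and similarly for $\Phi^{-1}$ and $\phi^{-1}$. The main obstacle is to extract the spatial Jacobian cleanly from the full Jacobian of $\Phi$; the product-metric observation of the first step makes this bookkeeping essentially automatic.
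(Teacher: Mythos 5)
Your proposal follows essentially the same route as the paper. The paper's proof is terse: it observes that $\phi$ is a bundle isomorphism whose inverse is $\Phi^{-1}$ restricted to $H^*TN|_{M\times\{1\}}$, notes that the slice injections $j_i:(f_i^*(TN),g_{S,f_i})\hookrightarrow(H^*TN,g_{SH})$ are isometric embeddings because $H$ is constant near $M\times\{i\}$, and then transfers the Lipschitz and Fiber Volume bounds from $\Phi$ to $\phi$. You make exactly the same observation about the product form of $g_{SH}$ on the preimages of $U_i$ (reading it off \eqref{metri}) and conclude the slice embeddings are isometric; your only addition is spelling out why the Fiber Volume descends, via the block structure of the Jacobian of $\Phi$ at the slice. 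One small clarification there: the factorization of the Jacobian is forced not by the product metric alone but by the fact that $\Phi$ covers the identity of $M\times[0,1]$, hence preserves the $(p,t)$ coordinates; this makes the $\partial_t$-component identically $1$, so the Fiber Volume of $\phi$ at a slice point literally equals the Fiber Volume of $\Phi$ there and the bound transfers with no extra constant. You also tacitly correct a notational slip in the statement — since $\Phi$ covers the identity and $H^*TN|_{M\times\{1\}}=f_1^*TN$, the slice on which one must restrict to land in $f_1^*TN$ is $t=1$, not $t=0$ — which is consistent with the paper's own proof (which inverts along $M\times\{1\}$).
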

\begin{proof}
We know that $\phi$ is a vector bundle isomorphism: its inverse is given by the inverse of $\Phi$ restricted to the bundle $H^*TN_{\vert M \times \{1\}}$. Moreover the injections $j_i: (f_i^*(TN), g_{Sf_i}) \longrightarrow (H^*TN, g_H)$ are isometric embedding since $H$ is constant around $M \times \{i\}$. Then the bounds on the Fiber Volume and on the Lipschitz constant of $\phi$ are the same of $\Phi$.
\end{proof}
\begin{rem}\label{equalizer}
Let $i= 0,1$ and consider two smooth Lipschitz maps $f_i: (M,g) \longrightarrow (N,h)$. Let us suppose that $h$ is a Lipschitz homotopy between $f_0$ and $f_1$. Let $p_h$ and $p_{f_i}$ be the submersion defined in Lemma \ref{tilde} related to $h$ and to $f_i$. Then 
\begin{equation}
    p_h \circ \Phi: (f_0^*(TN) \times [0,1], g_{S,f_0} + dt^2) \longrightarrow (N,h)
\end{equation}
is a smooth Lipschitz homotopy between $p_{f_0}$ and 
\begin{equation}
p_{f_1} \circ \phi: (f_0^*(T^\delta N), g_{S,f_0}) \longrightarrow (f_1^*(T^\delta N), g_{S,f_1}).
\end{equation}
Moreover it is also true that
\begin{equation}
\begin{split}
    T_{f_1} &= pr_{M1\star} \circ e_{F_1^*\omega} \circ p_{f_1}^* \\
    &= pr_{M1\star} \circ (\phi^{*})^{-1}  \circ \phi^* \circ e_{F_1^*\omega} \circ (\phi^{*})^{-1}  \circ \phi^* \circ p_{f_1}^* \\
    &= pr_{M0\star} \circ e_{\phi^* F_1^*\omega} \circ (p_{f_1} \circ \phi)^*,
\end{split}
\end{equation}
where $pr_{Mi}: f^*_i(TN) \longrightarrow M$ is the projection of the bundle. This means that up to R.N.-Lipschitz isomorphisms, we can consider $p_{f_1}$ as a map defined on $f_0^*TN$ and $p_h$ as a homotopy between $p_{f_0}$ and $p_{f_1}$. From now on we will not write the isomorphisms $\Phi$ or $\phi$ every time.
\end{rem}
	\subsection{Lemmas about homotopy}\label{lemhom}
	In this section we study the $\mathcal{L}^*$-boundedness of the pullback of some homotopies.
	\begin{lem}\label{homo1}
		Let $f: (M,g) \longrightarrow (N,h)$ be a smooth Lipschitz map and let $\delta \leq inj_N$. Consider the homotopy $H: (f^*(T^\delta N) \times [0,1], g_S + dt^2) \longrightarrow (N,l)$ defined as
		\begin{equation}
			H(p,w,t) := p_f(t\cdot w_{f(p)}).
		\end{equation}
		Then, if $f$ is a smooth R.-N.-Lipschitz map, then also $H$ and $(H, id_{[0,1]}): (f^*(T^\delta N) \times [0,1], g_S + dt^2) \longrightarrow (N \times [0,1], l + dt^2)$ are R.-N.-Lipschitz maps.
	\end{lem}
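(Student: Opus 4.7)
The plan is to reduce the statement to proving that $(H, id_{[0,1]})$ alone is R.-N.-Lipschitz, then to estimate its Fiber Volume slice-by-slice in $t$. Since $H = pr_N \circ (H, id_{[0,1]})$, with the projection $pr_N \colon (N \times [0,1], h + dt^2) \longrightarrow (N,h)$ obviously R.-N.-Lipschitz, Remark \ref{compo2} reduces the statement for $H$ to the one for $(H, id_{[0,1]})$. The Lipschitz property is immediate: $p_f$ is Lipschitz by Lemma \ref{tilde}, and the map $(p,w,t) \longmapsto (p, tw)$ has uniformly bounded partial derivatives because $\vert w \vert \leq \delta$.

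For the Fiber Volume, set $h_t(p,w) := p_f(p, tw)$ and factor $h_t = p_f \circ \sigma_t$, where $\sigma_t(p,w) := (p, tw)$ is a diffeomorphism of $f^*(T^\delta N)$ onto the smaller disc bundle $f^*(T^{t\delta} N)$. A computation in fibered normal coordinates (using Remark \ref{riemsub} for the Sasaki volume form and the vanishing of Christoffel symbols at the basepoint) shows that $\sigma_t$ scales volumes essentially by $t^n$, with a correction factor uniformly bounded thanks to the bounded geometry of $N$; hence for any measurable $B \subseteq f^*(T^{t\delta}N)$ we have $\mu(\sigma_t^{-1}(B)) \leq (C_\sigma / t^n) \mu(B)$. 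Applying Fubini with respect to the product measure on $f^*(T^\delta N) \times [0,1]$,
\begin{equation}
\mu\bigl((H, id)^{-1}(A)\bigr) = \int_0^1 \mu\bigl(h_t^{-1}(A_t)\bigr)\, dt \leq \int_0^1 \frac{C_\sigma}{t^n}\, \mu\bigl(p_f^{-1}(A_t) \cap f^*T^{t\delta}N\bigr)\, dt.
\end{equation}
To bound the restricted Fiber Volume of $p_f$ I mimic the proof of Corollary \ref{R.-N.-p}: Lemma \ref{svolta} gives a uniform bound $K$ on the Fiber Volume of $t_f$, and since the fibre $p_f^{-1}(q) \cap f^*T^{t\delta}N$ is supported in $\{p \in M : d(f(p),q) \leq t\delta\}$, one obtains $Vol_{p_f, t\delta}(q) \leq K \mu_M(f^{-1}(B_{t\delta}(q)))$. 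The R.-N.-Lipschitz hypothesis on $f$ (with constant $C_f$) gives $\mu_M(f^{-1}(B_{t\delta}(q))) \leq C_f \mu_N(B_{t\delta}(q))$, and Bishop--Gromov on the bounded geometry manifold $N$ (Remark \ref{bvolume}) yields $\mu_N(B_r(q)) \leq Q(r)$ with $Q(r)/r^n$ uniformly bounded on $[0, \delta]$ because small geodesic balls have Euclidean volume to leading order. Hence $Vol_{p_f, t\delta}(q) \leq K C_f C_Q (t\delta)^n$.

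Plugging in, $\mu(h_t^{-1}(A_t)) \leq C_\sigma K C_f C_Q \delta^n \mu_N(A_t)$ uniformly in $t$, and integration over $t \in [0,1]$ yields the required bound $\mu((H, id)^{-1}(A)) \leq C_\sigma K C_f C_Q \delta^n \mu_{N \times [0,1]}(A)$, proving the R.-N.-Lipschitz property. The main obstacle is the apparent $t \to 0$ singularity: the scaling $\sigma_t$ alone is \emph{not} R.-N.-Lipschitz because its Fiber Volume diverges like $t^{-n}$. The proof succeeds only because $p_f$ is restricted to the shrinking bundle $f^*T^{t\delta}N$, where the Bishop--Gromov estimate supplies a compensating $t^n$ factor from the Euclidean growth of small balls in $N$ --- a cancellation that is available precisely because $(N,h)$ has bounded geometry.
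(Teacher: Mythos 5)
Your proof is correct and hits the same cancellation the paper exploits — the $t^{-n}$ blowup from the fiber rescaling is compensated by the $\sim t^n$ growth of small-ball volumes in a manifold of bounded geometry, and the R.-N.-Lipschitz hypothesis on $f$ transfers the ball estimate from $N$ to $M$ — but you arrive there by a slightly different decomposition. The paper factors $(H, id_{(0,1)})$ through the diffeomorphism $(pr_M, H, id_{(0,1)})$ onto its image, computes that diffeomorphism's Fiber Volume explicitly in fibered normal coordinates (finding the $\frac{1}{t^n}(1 + C(t,y))$ behaviour), then integrates over $M$ via Proposition \ref{compo} and bounds the integration region using the Lipschitz constant of $H$. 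You instead factor $h_t = p_f \circ \sigma_t$, isolate the $t^{-n}$ singularity entirely in the scaling diffeomorphism $\sigma_t$, and recycle the Fiber Volume bound on $t_f$ from Lemma \ref{svolta}/Corollary \ref{R.-N.-p} applied to $p_f$ restricted to the shrinking disc bundle $f^*T^{t\delta}N$; you then reassemble via Fubini in $t$ rather than via Proposition \ref{compo}. The advantage of your route is that the mechanism is laid bare and the earlier Fiber Volume estimate is genuinely reused rather than re-derived; the paper's route bundles everything into one coordinate computation and avoids having to separately justify the $t^n$ scaling of $\sigma_t$ and the restricted bound on $p_f|_{T^{t\delta}}$. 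Both are sound; yours is arguably the cleaner conceptual split. One small caveat you should make explicit: when invoking Bishop--Gromov via Remark \ref{bvolume} you need $Q(r)/r^n$ bounded near $r=0$, which holds because the comparison volume behaves like $\omega_n r^n$ to leading order — this is the same point the paper makes with its $C_H^n t^n(1+L(t))$ estimate, and it is worth stating that bounded Ricci below (not full bounded geometry) already suffices here.
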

	\begin{proof}
		Observe that $H = pr_{N} \circ (H, id_{[0,1]})$, where $pr_N: N \times [0,1] \longrightarrow N$ is the projection of the first component. In particular $pr_N$ is a R.-N.-Lipschitz map. This means that if we prove that $(H, id_{[0,1]})$ is a R.-N.-Lipschitz map, then also $H$ is a R.-N.-Lipschitz map.
		\\Observe that $f^*(T^\delta N) \times \{0,1\}$ is a set of null measure and so we can consider the interval $[0,1]$ as open.
		\\Let us define the map 
		\begin{equation}
			(pr_M, H, id_{(0,1)}): (f^*T^\delta N \times (0,1), g_S + dt^2) \longrightarrow (M \times N \times (0,1), g \times h + dt^2)
		\end{equation}
		given by $(pr_M, H, id_{(0,1)})(v_{f(p)}, s) := (p, p_f(s \cdot v_p), s)$. This map is a submersion since we are considering $s \neq 0$.
	    In particular $(pr_M, H, id_{(0,1)})$ is Lipschitz since it is a composition of Lipschitz maps.
	    \\Moreover $(H,id_{(0,1)}) = pr_{N \times (0,1)} \circ (pr_M, H, id_{(0,1)})$ and so, by applying Proposition \ref{compo}, we have
	    \begin{equation}
	    Vol_{(H,id_{(0,1)})}(q,t) = \int_{M} Vol_{(pr_M, H, id_{(0,1)})}(p, q, t) d\mu_M.
	    \end{equation}
		Let us calculate the Fiber Volume of $(pr_M, H, id_{(0,1)})$.
		\\Similarly to the Lemma \ref{svolta}, consider for each $p$ in $M$ some normal coordinates $\{x^i\}$ around $p$, some normal coordinates $\{y^j\}$ around $f(p)$ in $N$. Fix the frame $\{\frac{\partial}{\partial x^i}\}$, we obtain the coordinates $\{x^i, \mu^j, t\}$ on $f^*(TN)\times [0,1]$ around $(p, 0, 0)$.
		\\Moreover we can also fix the coordinates $\{x^i, y^j, t\}$ on $M\times N \times [0,1]$. \\Observe that the coordinates $\{x^i, y^j, t\}$ are enough to cover the image of $(id_M, H, id_{[0,1]})$ which is contained in a $\delta$-neighborhood of $Graph(f) \times [0,1]$.
		\\Finally, with respect to these coordinates, we obtain
		\begin{equation}
			(pr_M, H, id_{(0,1)})(0, \mu^j, t) = (0, t \cdot y^j, t).
		\end{equation}
		Similarly as we did in Lemma \ref{svolta}, the Volume forms $Vol_{f^*(TN) \times [0,1]}(0,\mu,t) = dx^1 \wedge ... \wedge dx^m \wedge d\mu^1 \wedge ... \wedge d\mu^n \wedge dt$ and $Vol_{M \times N \times [0,1]}(0, y, t) = \sqrt{det(H_{ij}(0,y))}dx^1 \wedge ... \wedge dx^m \wedge dy^1 \wedge ... \wedge dy^n \wedge dt$, where
		\begin{equation}
			H_{ij}(0,y) = \begin{bmatrix}
				1 && 0 && 0 \\
				0 && h_{l,s}(y) && 0 \\
				0 && 0 && 1
			\end{bmatrix}
		\end{equation}
		and $h_{l,s}$ are the components of the metric $h$ on $N$. Then $(pr_M, H, id_{[0,1]})$ is a diffeomorphism with its image and so, by applying Remark \ref{oss}, its Fiber Volume on $im(pr_M, H, id_{[0,1]})$ is given by
		\begin{equation}
			\vert[(pr_M, H, id_{[0,1]})^{-1}]^* \frac{Vol_{f^*TN \times [0,1]}}{(pr_M, \tilde{h}, id_{[0,1]})^*Vol_{M \times N \times [0,1]}}\vert .
		\end{equation}
		Observe that, similarly to Lemma \ref{svolta}, in a point $(0, y^j, t)$ the Fiber Volume of $(pr_M, \tilde{h}, id_{[0,1]})$ is
		\begin{equation}
			\frac{Vol_{f^*TN \times [0,1]}}{(pr_M, H, id_{[0,1]})^*Vol_{M \times N \times [0,1]}} = \frac{1}{t^n} (1 + C(t, y)),
		\end{equation}
		where $C$ is a bounded function.
		\\Consider the projection $pr_{N \times [0,1]}: M \times N \times (0,1) \longrightarrow N \times (0,1)$. Let us recall that, thanks to the Proposition \ref{compo}, we have
		\begin{equation}
			Vol_{(H,id_{(0,1)})}(q,t) = \int_M Vol_{(id_M, H, id_{[0,1]})}(p,q,t) d\mu_M.
		\end{equation}
		We know that outside the image of $(id_M, H, id_{[0,1]})$ the Fiber Volume $Vol_{(id_M, H, id_{[0,1]})}$ is null. Let us denote by $H_t: M \times N \longrightarrow N$ the map defined as $H_t(p,q) := H(p,q,t)$. Observe that $Vol_{(id_M, H, id_{[0,1]})}$ on $M \times \{q\} \times \{t\}$ is null outside
		 \begin{equation}
		 pr_M(H^{-1}_t(q)) \times \{q\} \times \{t\} \supseteq [im(id_M, H, id_{[0,1]})] \cap [M \times \{q\} \times \{t\}].
		 \end{equation}
		 Since $H$ is Lipschitz, by applying Remark \ref{uniformlyp}, we have
		\begin{equation}
			H^{-1}_t(q) \subseteq \pi^{-1}(f^{-1}(B_{C_H\cdot t}(q))) \times \{t\}
		\end{equation}
	where $\pi: f^*TN \longrightarrow M$ is the projection of the bundle and $C_H$ is the Lipschitz constant of $H$. So
		\begin{equation}
			pr_M(H^{-1}_t(q)) \times \{q\} \times \{t\} \subset f^{-1}(B_{C_H\cdot t}(q))\times \{q\} \times \{t\}.
		\end{equation}
		Observe that, since $N$ is a manifold of bounded geometry,
		\begin{equation}
			\mu_N(B_{C_H \cdot t}(q)) \leq C_H^nt^n(1 + L(t))
		\end{equation}
		where $L$ is a bounded function which depends by $t$.
		\\Then because of the fact that $f$ is a R.-N.-Lipschitz map, we obtain
		\begin{equation}
			\mu(f^{-1}(B_{C_H\cdot t}(q))) \leq K_0 \mu_N(B_{C_H \cdot t}(q)) \leq K_0C_H^nt^n(1 + L(t)),
		\end{equation}
		 Then  the Fiber Volume of $(H, id_{(0,1)})$ is given by
		\begin{equation}
			\begin{split}
				Vol_{(H, id_{(0,1)})}(q,t) &= \int_{f^{-1}(B_{C_H\cdot t}(q))} Vol_{(id_M, H, id_{[0,1]})}(p,q,t) d\mu_M \\
				&\leq \frac{1}{t^n}(1 + C(t, \tilde{y})) \cdot (K_0C_H^nt^n(1 + L(t))) \\
				&\leq K \cdot \frac{1}{t^n} \cdot t^n \leq K.
			\end{split}
		\end{equation}
		and so $(H, id_{(0,1)})$ is a R.-N.-Lipschitz map and so also
		\begin{equation}
			H = pr_N \circ (H, id_{(0,1)})
		\end{equation}
		is a R.-N.-Lipschitz map.
	\end{proof}
Let us consider two smooth uniformly proper Lipschitz maps $g: (S,v) \longrightarrow (M,m)$ and $f:(M,m) \longrightarrow (N,l)$ between manifolds of bounded geometry. Let us define the map $\overline{g}: g^*T^\sigma M \longrightarrow M$ defined as $\overline{g}(v_{g(p)}) := g(p)$ and consider the submersion $p_g: g^*T^\sigma M \longrightarrow M$ related to $g$.
\\Consider, moreover, the compositions $f \circ \overline{g}$ and $f \circ p_g:  g^*T^\sigma M \longrightarrow N$. Observe that these maps are Lipschitz-homotopic. Then, because of Remark \ref{equalizer}, the pullback bundles over $S$ given by $(f \circ \overline{g})^*TN$ and $(f \circ p_g)^*TN$ are R.-N.-Lipschitz homotopy equivalent. This means that, up to R.N.-Lipschitz equivalence of vector bundles, we can suppose 
\begin{equation}
    (f \circ \overline{g})^*TN = (f \circ p_g)^*TN.
\end{equation}
Then we can see the maps $p_{f \circ p_g}$ and $p_{f \circ \overline{g}}$ as defined on the same domain $(f \circ \overline{g})^*TN$.
\\Denote by $h: g^*(T^\sigma M) \times [0,1] \longrightarrow N$ the homotopy defined as $h(v_{g(p)}, t) := f \circ p_g(t \cdot v_{g(p)})$ between $f \circ p_g$ and $f \circ \overline{g}$. Because of Corollary \ref{equivhomo} the bundles $(h^*(TN), g_{S,h})$ and $(f \circ \overline{g}^*(T^\delta N) \times [0,1], g_{S} + dt^2)$ are R.-N.-Lipschitz equivalent. In particular, according to Remark \ref{equalizer}, we consider
\begin{equation}
p_h: ((f \circ \overline{g})^*(T^\delta N) \times [0,1], g_{S} + dt^2) \longrightarrow (N,l)
\end{equation}
is a R.-N.-Lipschitz map because $h$ is smooth, uniformly proper and Lipschitz map and because of Remark \ref{salva}.
\\Observe that $p_h$ is a Lipschitz homotopy between the maps $p_{f \circ \overline{g}}$ and $p_{f \circ p_{g}}: f \circ \overline{g}^*(T^\delta N) \longrightarrow (N,l)$.
\\Applying the point 4. of Proposition \ref{tilde}, we obtain that
\begin{equation}\label{cosse}
		p_{f \circ p_{g}} = p_{id_M} \circ F \circ P_g = p_f \circ P_g
\end{equation}
where $F: f^*(T^\delta N) \longrightarrow T^\delta N$ is the bundle map induced by $f$ and 
\begin{equation}
P_g: (f \circ \overline{g})^*T^\delta N = (f \circ p_g)^*T^\delta N \longrightarrow f^*T^\delta N
\end{equation}
is the bundle map induced by $p_g$. Then we obtain the following Proposition.
\begin{prop}\label{homo2}
The map $H: ((f \circ \overline{g})^*T^\delta N \times [0,1], g_{S} + dt^2) \longrightarrow (N, l)$ defined as
\begin{equation}
H(v_{f \circ \overline{g}},t) := p_f(P_g(t\cdot v_{f \circ \overline{g}})) = p_h
\end{equation}
is a R.-N.-Lipschitz map and it is the homotopy between $p_{f \circ p_{g}}$ and $p_{f \circ \overline{g}}$.
\end{prop}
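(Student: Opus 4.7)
The plan is to verify the identity $H = p_h$ asserted in the definition of $H$, and then to invoke the discussion immediately preceding the proposition, which has already established $p_h$ as an R.-N.-Lipschitz map. The homotopy property and the R.-N.-Lipschitz property then follow at once.

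First I would decompose the homotopy $h$ as $h = f \circ p_g \circ \sigma$, where $\sigma : g^*T^\sigma M \times [0,1] \to g^*T^\sigma M$ is the fiberwise scaling $\sigma(v_{g(p)}, t) := t \cdot v_{g(p)}$. This gives $h^*TN = \sigma^*(f \circ p_g)^*TN$, and at a point $((v_{g(p)}, t), u)$ of $h^*TN$ a chase of the definition of $p_h$ in Lemma \ref{tilde} yields
\begin{equation}
p_h((v_{g(p)}, t), u) = \exp_{f(p_g(t \cdot v_{g(p)}))}(u) = p_{f \circ p_g}(t \cdot v_{g(p)}, u) = p_f \circ P_g(t \cdot v_{g(p)}, u),
\end{equation}
where the last equality uses (\ref{cosse}). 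Under the R.-N.-Lipschitz identification of $h^*TN$ with $(f \circ \overline{g})^*T^\delta N \times [0,1]$ supplied by Corollary \ref{equivhomo}, and by Remark \ref{equalizer}, the scaling on the base of $(f \circ p_g)^*TN$ induced by $\sigma$ translates into the operation $t \cdot v_{f \circ \overline{g}}$ appearing in the formula for $H$; thus $H = p_h$. Evaluating at the endpoints: at $t = 0$ one recovers $p_{f \circ \overline{g}}$ (the identification being the identity there), while at $t = 1$ equation (\ref{cosse}) gives $H(v, 1) = p_f(P_g(v)) = p_{f \circ p_g}(v)$, so $H$ is indeed a homotopy between $p_{f \circ \overline{g}}$ and $p_{f \circ p_g}$.

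For the R.-N.-Lipschitz assertion I would note that $h$ is a composition of smooth Lipschitz maps: $p_g$ is uniformly proper by Corollary \ref{R.-N.-p} (applied to the smooth uniformly proper Lipschitz map $g$), $f$ is uniformly proper by hypothesis, and $\sigma$ has uniformly bounded fibers, so $h$ itself is smooth, uniformly proper, and Lipschitz. Remark \ref{salva} then gives that $p_h$ is R.-N.-Lipschitz on $h^*T^\delta N$, and the bundle identification of Corollary \ref{equivhomo}, being an R.-N.-Lipschitz equivalence, transfers this property to $H$ on $(f \circ \overline{g})^*T^\delta N \times [0,1]$ via Remark \ref{compo2}. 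The main delicacy of the argument is the compatibility check that the R.-N.-Lipschitz equivalence $\Phi$ of Corollary \ref{equivhomo} intertwines the scaling $\sigma$ on the base of $(f \circ p_g)^*TN$ with the formal scaling $t \cdot v_{f \circ \overline{g}}$ in the statement; once this is verified, the entire proof reduces to the chain of identifications above together with an appeal to previous results, and no new estimates on Fiber Volumes are required.
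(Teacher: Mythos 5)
Your proposal is correct and takes essentially the same route as the paper. The paper in fact states Proposition~\ref{homo2} without a separate proof environment; the entire preceding discussion (the construction of $h$, the identification of $(f\circ\overline{g})^*T^\delta N$ with $(f\circ p_g)^*T^\delta N$ via Remark~\ref{equalizer} and Corollary~\ref{equivhomo}, identity~(\ref{cosse}), and the invocation of Remark~\ref{salva} to get that $p_h$ is R.-N.-Lipschitz because $h$ is smooth, Lipschitz and uniformly proper) \emph{is} the argument, and you reproduce it accurately while making the chain $H=p_h$ and the endpoint evaluations more explicit. Two small slips worth noting: you write $h^*T^\delta S$ where it should be $h^*T^\delta N$ (here $f:(M,m)\to(N,l)$, so $N$ is the target); and you cite Corollary~\ref{R.-N.-p} for uniform properness of $p_g$, but that corollary only asserts the R.-N.-Lipschitz property --- the uniform properness of $p_g$ follows rather from $p_g\sim\overline{g}$ together with Remark~\ref{uniformlyp}, exactly as in the proof of that corollary. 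Neither slip affects the substance.
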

	\subsection{Uniform homotopy invariance of (un)-reduced $L^{q,p}$-cohomology}
	Before the final proof we need some Lemmas.
	\begin{lem}\label{GLee}
		Let $(M,g)$ be a Riemannian manifold and consider $([0,1], dt^2)$. Then there is an $\mathcal{L}^*$-bounded operator $\int_{0 \mathcal{L}}^1: \mathcal{L}^p(M\times[0,1]) \longrightarrow \mathcal{L}^p(M)$ such that for all smooth $\alpha \in \mathcal{L}^p(M\times[0,1])$,
		\begin{equation}\label{taglia}
		i_1^*\alpha - i_0^*\alpha = \int_{0 \mathcal{L}}^1d \alpha + d\int_{0 \mathcal{L}}^1 \alpha
		\end{equation}
		Moreover $\int_{0 \mathcal{L}}^1$ sends compactly-supported differential forms on $\Omega^*(M \times [0,1])$ to $\Omega^*_c(M)$.
	\end{lem}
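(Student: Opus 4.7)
The plan is to define $\int_{0\mathcal{L}}^1$ as the classical fiber integration operator over the $[0,1]$ factor and to verify $\mathcal{L}^*$-boundedness using Jensen's inequality exactly as in Proposition \ref{star}. Every smooth form $\alpha$ on $M \times [0,1]$ decomposes uniquely as $\alpha = \alpha_0 + dt \wedge \alpha_1$, where $\alpha_0$ and $\alpha_1$ are $t$-dependent families of forms on $M$ not involving $dt$. I would then set
\[
\int_{0\mathcal{L}}^1 \alpha \,:=\, (-1)^{\deg \alpha_1}\int_0^1 \alpha_1(\cdot,t)\,dt,
\]
with the sign chosen so that the homotopy formula \eqref{taglia} holds with the conventional orientation.

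The homotopy formula is a standard local coordinate computation: writing $d = d_M + dt \wedge \partial_t$ and applying it to both $\alpha$ and $\int_{0\mathcal{L}}^1 \alpha$, the mixed terms cancel, and what survives of $\int_0^1 \partial_t \alpha_0\,dt$ is $\alpha_0(\cdot,1) - \alpha_0(\cdot,0) = i_1^*\alpha - i_0^*\alpha$. This is the chain-homotopy used in the de Rham proof of Poincaré's Lemma (see e.g.\ Proposition 4.1.1 in the book of Bott and Tu \cite{bottu}); no bounded-geometry hypothesis is required for this step.

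For the $\mathcal{L}^p$-bound I would follow the pattern of Proposition \ref{star}. Pointwise on $M$ we have $|\alpha_1(x,t)|_x \leq |\alpha(x,t)|_{(x,t)}$, since the inclusion $\Lambda^* T^*_xM \hookrightarrow \Lambda^* T^*_{(x,t)}(M\times [0,1])$ is an isometric embedding in the product metric. Applying the Multivariate Jensen's Inequality (Lemma \ref{jensen}) to the probability space $([0,1], dt)$ with the convex function $|\cdot|^p$ yields
\[
\Big| \int_0^1 \alpha_1(x,t)\,dt \Big|^p_x \,\leq\, \int_0^1 |\alpha_1(x,t)|^p_x\,dt.
\]
Integrating over $M$ and invoking Fubini gives $\|\int_{0\mathcal{L}}^1 \alpha\|_p^p \leq \|\alpha\|_p^p$. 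This shows $\mathcal{L}^p$-boundedness on the dense subspace $\Omega^*_c(M \times [0,1])$, and the operator extends by continuity to $\mathcal{L}^p\Omega^*(M \times [0,1])$. Since $p \in [1,+\infty)$ was arbitrary, the operator is $\mathcal{L}^*$-bounded.

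The compact support preservation is automatic: if $\alpha \in \Omega_c^*(M \times [0,1])$ has support contained in $K$, then $\int_{0\mathcal{L}}^1 \alpha$ vanishes outside the first-factor projection of $K$, which is compact in $M$ because $[0,1]$ is compact. The only genuine subtlety I anticipate is the sign-bookkeeping in the homotopy formula --- standard but error-prone --- whereas the $\mathcal{L}^p$-bound itself is immediate from Jensen's inequality and mirrors exactly the argument already carried out in Proposition \ref{star}.
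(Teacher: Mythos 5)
Your approach is essentially the same as the paper's: decompose $\alpha = \alpha_0 + dt\wedge\alpha_1$, integrate the $dt$-component over $[0,1]$, prove the chain-homotopy identity by a textbook computation (the paper cites Lemma 11.4 of Lee, you cite Bott--Tu; both are the same Poincar\'e-lemma homotopy operator), and establish $\mathcal{L}^p$-boundedness via Jensen's inequality. The one genuine difference is cosmetic: you run the Jensen argument directly, whereas the paper observes that $\int_{0\mathcal{L}}^1$ agrees with the fiber integration $p_\star$ up to sign and then invokes Proposition \ref{star}; your direct route is actually a bit cleaner, since $M\times[0,1]$ is not literally a vector bundle with Sasaki metric, so the reduction to Proposition \ref{star} requires a small leap of faith that you avoid.

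One concrete slip: the sign $(-1)^{\deg\alpha_1}$ in your definition is wrong. Writing $d\alpha = d_M\alpha_0 + dt\wedge(\partial_t\alpha_0 - d_M\alpha_1)$, the operator $I\alpha := \int_0^1\alpha_1(\cdot,t)\,dt$ \emph{with no extra sign} already satisfies
\begin{equation}
I(d\alpha) + d(I\alpha) = \int_0^1\partial_t\alpha_0\,dt - d_M\!\int_0^1\alpha_1\,dt + d_M\!\int_0^1\alpha_1\,dt = i_1^*\alpha - i_0^*\alpha,
\end{equation}
whereas with the prefactor $(-1)^{\deg\alpha_1}$ the two $d_M\!\int\alpha_1$ terms reinforce instead of cancelling, and the formula fails. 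You flagged the sign bookkeeping as the likely weak point; it is, and the fix is simply to drop the sign. (The $(-1)^{\deg\alpha_1}$ is the factor that converts $I$ into the fiber integration $p_\star$ in the ``vertical-form-last'' convention of Proposition \ref{star}, which is exactly what the paper means by ``they differ by sign.'') This does not affect your $\mathcal{L}^p$-bound or the compact-support argument, both of which are sign-independent and correct.
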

	\begin{proof}
		Let $\alpha$ be in $\Omega^*_c(M\times [0,1])$ with compact support and let $p:M\times[0,1] \longrightarrow M$ be the projection on the first component. Then we can decompose every differential form on $M \times [0,1]$ as a sum of forms
		\begin{equation}
		\alpha = g(x,t)p^*\omega +  f(x,t)dt \wedge p^*\omega,
		\end{equation}
		for some $\omega$ in $\Omega_c(M)$ and for some $C^{\infty}$-class functions $g,f:M\times[0,1] \longrightarrow \mathbb{C}$. Then we can define the linear operator $\int_{0,\mathcal{L}}^{1}$ as follow: if $\alpha$ is a $0$-form with respect to $[0,1]$, then
		\begin{equation}
		\int_{0,\mathcal{L}}^{1}\alpha := \int_{0,\mathcal{L}}^{1} g(x,t)p^*\omega =  0
		\end{equation}
	and, if $\alpha$ is a $1$-form with respect to $[0,1]$,
		\begin{equation}
		\int_{0,\mathcal{L}}^{1}\alpha := (\int_{0}^{1} f(x,t)dt)\omega.
		\end{equation}
		This operator is very similar to the operator $p_\star$ (the integration along the fiber of $p: M \times [0,1] \longrightarrow M$), but they differ by sign. Consider $\alpha$ in $\Omega^k(M\times [0,1])$: we have $\int_{0\mathcal{L}}^1 \alpha = \pm p_\star\alpha$ where the choice between $+$ or $-$ depends on the degree of $\alpha$. This implies that the norm of $\int_{0\mathcal{L}}^1$ is equal to the norm of $p_\star$ as operators between the $\mathcal{L}^q$-spaces for each $q$ in $[1; + \infty)$. Then, by Proposition \ref{star}, we can see that $p_\star$ and $\int_{0,\mathcal{L}}^{1}$ are $\mathcal{L}^*$-bounded.
		\\We know from Lemma 11.4 of the book of Lee \cite{lee} that $\int_{0\mathcal{L}}^1: \Omega^*(M \times [0,1]) \longrightarrow \Omega^{*-1}(M)$ and that for all differential forms the equality (\ref{taglia}) is satisfied.
	\end{proof}
	\begin{prop}\label{K1}
		Let $id: (N,h) \longrightarrow (N,h)$ be the identity map on a manifold of bounded geometry and consider $p_{id}: T^\delta N \longrightarrow N$ the submersion related to the identity defined in Lemma \ref{tilde}. Then, if $pr_N: T^\delta N \longrightarrow N$ is the projection of the tangent bundle, then there is an $\mathcal{L}^*$-bounded operator $K_1: \mathcal{L}^*(N) \longrightarrow \mathcal{L}^*(T^\delta N)$ such that for every smooth form $\alpha$
		\begin{equation}
		p_{id}^*\alpha - pr_N^*\alpha = d \circ K_1\alpha + K_1\circ d\alpha.
		\end{equation}
		Moreover, if $\alpha$ is in $\Omega^*_c(N)$, then $K_1\alpha \in \Omega^*(T^\delta N)$ has compact support.
	\end{prop}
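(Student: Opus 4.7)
My plan is to realize $K_1$ as the composition of pullback along the straight-line homotopy inside the fibers with the homotopy operator $\int_{0\mathcal{L}}^1$ from Lemma \ref{GLee}. Concretely, define
\begin{equation}
H:(T^\delta N \times [0,1], h_S + dt^2) \longrightarrow (N,h), \qquad H(v_p,t) := p_{id}(t\cdot v_p) = \exp_p(tv_p).
\end{equation}
Then $H(\cdot,0) = pr_N$ and $H(\cdot,1) = p_{id}$, i.e.\ $H\circ i_0 = pr_N$ and $H\circ i_1 = p_{id}$, where $i_0,i_1: T^\delta N \longrightarrow T^\delta N \times [0,1]$ are the obvious inclusions. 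This is just the instance $f = id_N$ of the homotopy studied in Lemma \ref{homo1}: applying that lemma (with the trivial observation that $id_N$ is a smooth R.-N.-Lipschitz map on a manifold of bounded geometry) yields that $H$ is R.-N.-Lipschitz, hence $H^*$ is $\mathcal{L}^*$-bounded.

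I then set
\begin{equation}
K_1\alpha := \int_{0\mathcal{L}}^1 H^*\alpha.
\end{equation}
Since $\int_{0\mathcal{L}}^1$ is $\mathcal{L}^*$-bounded by Lemma \ref{GLee}, and $H^*$ is $\mathcal{L}^*$-bounded by the previous paragraph, $K_1$ is $\mathcal{L}^*$-bounded as a composition. The chain-homotopy identity follows by applying (\ref{taglia}) to $H^*\alpha$ and using $d H^* = H^* d$:
\begin{equation}
p_{id}^*\alpha - pr_N^*\alpha = i_1^*H^*\alpha - i_0^*H^*\alpha = \int_{0\mathcal{L}}^1 d(H^*\alpha) + d\!\int_{0\mathcal{L}}^1 H^*\alpha = K_1 d\alpha + d K_1 \alpha.
\end{equation}

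It remains to verify the compact-support statement. If $\alpha \in \Omega^*_c(N)$, then $\mathrm{supp}(H^*\alpha) \subseteq H^{-1}(\mathrm{supp}(\alpha))$, and, since $\int_{0\mathcal{L}}^1$ coincides (up to sign) with integration along the fiber of the projection $T^\delta N \times [0,1] \longrightarrow T^\delta N$, we have
\begin{equation}
\mathrm{supp}(K_1\alpha) \subseteq pr_{T^\delta N}\bigl(H^{-1}(\mathrm{supp}(\alpha))\bigr).
\end{equation}
For any $(v_p,t)$ in $H^{-1}(\mathrm{supp}(\alpha))$ we have $d(p,\mathrm{supp}(\alpha)) \leq t|v_p| \leq \delta$ and $|v_p|\leq \delta$, so the projection to $T^\delta N$ is contained in the closed disk subbundle of $T^\delta N$ over the closed $\delta$-neighborhood $U_\delta$ of $\mathrm{supp}(\alpha)$ in $N$. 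Since $N$ has bounded geometry it is complete, so by Hopf--Rinow $U_\delta$ is compact, and the corresponding closed disk subbundle is compact. The main technical point is only Lemma \ref{homo1}; the rest of the proof is a direct assembly of Lemma \ref{GLee} and the boundedness of $H^*$.
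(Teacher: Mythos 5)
Your proof is correct and follows essentially the same route as the paper's own argument: defining the straight-line fiber homotopy $H(v_p,t)=\exp_p(tv_p)$, invoking Lemma \ref{homo1} (with $f=id_N$, which is trivially R.-N.-Lipschitz) to get $\mathcal{L}^*$-boundedness of $H^*$, and setting $K_1:=\int_{0\mathcal{L}}^1\circ H^*$ with the chain-homotopy identity coming from Lemma \ref{GLee}. The only difference is that you spell out the compact-support verification (via completeness of $N$ and compactness of the disk subbundle over the closed $\delta$-neighborhood of $\mathrm{supp}(\alpha)$), which the paper leaves implicit; this is a welcome addition and is correct.
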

	\begin{proof}
		Observe that $pr_N$ is a Lipschitz submersion with bounded Fiber Volume and so $pr_N^*$ is a bounded operator.
		\\Moreover $p_{id}$ and the projection $pr_N: (T^\delta N, g_S) \longrightarrow (N, h)$ are homotopic. In particular, the homotopy $H(w_p, s) = p_{id}(s \cdot w_p)$ is a Lipschitz map. Moreover, by Lemma \ref{homo1} we know that $H$ is R.-N.-Lipschitz and so $H^*$ is an $\mathcal{L}^*$-bounded operator.
		\\This means that for all $\alpha$ in $\Omega^*_c(N)$, because of Lemma 11.4. of \cite{lee}
		\begin{equation}
		p_{id}^* - pr_N^* (\alpha)= i_1^*H^*\alpha - i_0^*H^*\alpha = \int_{0\mathcal{L}}^1 H^* d\alpha+ d \int_{0\mathcal{L}}^1 H^*\alpha.
		\end{equation}
		Then $K_1:= \int_{0\mathcal{L}}^1 \circ H^*$ satisfies (\ref{K1}) and it is $\mathcal{L}^*$-bounded.
	\end{proof}
 \begin{prop}\label{K2}
Consider $g: (M,m) \longrightarrow (N,h)$ and $f:(N,h) \longrightarrow (S,r)$ two smooth uniformly proper Lipschitz maps between manifolds of bounded geometry. Let $\delta \leq inj_N$ and $\sigma \leq inj_S$. Denote by $\overline{g}: g^*(T\delta N) \longrightarrow S$ the map $g \circ pr_M$, where $pr_M: g^*(T\delta N) \longrightarrow M$. 
\\Then there is an $\mathcal{L}^*$-bounded operator $K_2: \mathcal{L}^*(N) \longrightarrow \mathcal{L}^*((f \circ \overline{g})^*T^\delta S)$ such that for every smooth form $\alpha$
\begin{equation}
p_{f \circ p_g}^*\alpha - p_{f\circ \overline{g}}^*\alpha = d \circ K_2\alpha + K_2 \circ d\alpha.
\end{equation}
Moreover, if $\alpha$ is in $\Omega^*_c(S)$, then $K_2\alpha \in \Omega^*((f \circ \overline{g})^*T^\delta S)$ has compact support.
\end{prop}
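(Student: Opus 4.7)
The plan is to mirror the proof of Proposition~\ref{K1} almost verbatim, replacing the linear scaling homotopy used there (between $p_{id}$ and $pr_N$) with the homotopy $H$ supplied by Proposition~\ref{homo2}. Concretely, Proposition~\ref{homo2} gives an R.-N.-Lipschitz map
\begin{equation*}
H:\bigl((f\circ\overline{g})^{*}T^{\sigma}S\times[0,1],\,g_{S}+dt^{2}\bigr)\longrightarrow (S,r),\qquad H(v,t)=p_{f}\bigl(P_{g}(t\cdot v)\bigr),
\end{equation*}
which interpolates between $p_{f\circ\overline{g}}$ at $t=0$ and $p_{f\circ p_{g}}$ at $t=1$, where we identify $(f\circ\overline{g})^{*}TS$ with $(f\circ p_{g})^{*}TS$ via the R.-N.-Lipschitz bundle equivalence of Proposition~\ref{bundlelem} as explained in Remark~\ref{equalizer}. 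Since $H$ is R.-N.-Lipschitz, $H^{*}$ is an $\mathcal{L}^{*}$-bounded operator.

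Next I would set $K_{2}:=\int_{0\mathcal{L}}^{1}\circ H^{*}$. By Lemma~\ref{GLee} the operator $\int_{0\mathcal{L}}^{1}$ is $\mathcal{L}^{*}$-bounded and sends compactly supported forms to compactly supported forms, so $K_{2}$ is $\mathcal{L}^{*}$-bounded as a composition of $\mathcal{L}^{*}$-bounded operators. Applying the homotopy formula \eqref{taglia} of Lemma~\ref{GLee} to $H^{*}\alpha$, and using $p_{f\circ p_{g}}=H\circ i_{1}$ together with $p_{f\circ\overline{g}}=H\circ i_{0}$, gives
\begin{equation*}
p_{f\circ p_{g}}^{*}\alpha-p_{f\circ\overline{g}}^{*}\alpha=i_{1}^{*}H^{*}\alpha-i_{0}^{*}H^{*}\alpha=\int_{0\mathcal{L}}^{1}dH^{*}\alpha+d\int_{0\mathcal{L}}^{1}H^{*}\alpha=K_{2}\,d\alpha+d\,K_{2}\alpha,
\end{equation*}
which is exactly the required cochain homotopy identity.

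For the compact-support clause, the missing observation is that $H$ itself is uniformly proper. Since $g$ and $f$ are uniformly proper and $p_{g}\sim_{\Gamma}\overline{g}$, $p_{f}\sim_{\Gamma}\overline{f}$ via Lipschitz homotopies, both $p_{g}$ and $p_{f}$ are uniformly proper by Remark~\ref{uniformlyp}; the scaling $v\mapsto t\cdot v$ on a bounded disk bundle is trivially so, and thus the composition $H$ is uniformly proper as well. Hence for $\alpha\in\Omega^{*}_{c}(S)$, the support of $H^{*}\alpha$ is bounded in $(f\circ\overline{g})^{*}T^{\sigma}S\times[0,1]$; being closed it is compact, and since $\int_{0\mathcal{L}}^{1}$ preserves compact supports (Lemma~\ref{GLee}), $K_{2}\alpha\in\Omega^{*}((f\circ\overline{g})^{*}T^{\sigma}S)$ is compactly supported.

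The one delicate step I expect to require care is not any single estimate but rather the bundle identification $(f\circ\overline{g})^{*}TS\cong(f\circ p_{g})^{*}TS$: without it, the two maps $p_{f\circ p_{g}}$ and $p_{f\circ\overline{g}}$ would live on different spaces and could not be read as endpoints of a single homotopy. Once one systematically absorbs the R.-N.-Lipschitz equivalence of Remark~\ref{equalizer} into the operator norms (which is licit by Remark~\ref{ossbundle}), the entire argument reduces to a direct adaptation of Proposition~\ref{K1}, with Proposition~\ref{homo2} providing the only new analytic input.
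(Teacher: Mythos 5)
Your proposal is correct and takes essentially the same route as the paper: both invoke the homotopy $p_h$ of Proposition~\ref{homo2} (which is exactly your $H$), define $K_2 := \int_{0\mathcal{L}}^1 \circ p_h^*$, apply the homotopy formula of Lemma~\ref{GLee}, and close the compact-support clause by observing that $p_h$ is proper. Your version is in fact slightly more careful on one point: you correctly write $p_{f\circ p_g}^*\alpha - p_{f\circ\overline g}^*\alpha = (i_1^* - i_0^*)p_h^*\alpha$, matching the endpoint conventions of $h(\cdot,0)=f\circ\overline g$, $h(\cdot,1)=f\circ p_g$ and the sign in \eqref{taglia}, whereas the paper writes $(i_0^* - i_1^*)$ in the corresponding line, an apparent sign slip.
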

	\begin{proof}
		Consider $h: g^*T^\delta N  \times [0,1] \longrightarrow S$ the map $h(w_{g(p)}, s) := f \circ p_g(s \cdot w_{g(p)})$. This is a Lipschitz homotopy between $f \circ p_g$ and $f \circ \overline{g}$. Let us define the submersion related to $h$ defined in Lemma \ref{tilde}
		\begin{equation}
		p_h: (f \circ \overline{g})^*(T^\sigma S) \times [0,1] \longrightarrow S.
		\end{equation}
		Actually the domain of $p_h$ should be $h^*T^\delta S$ and the domain of $f \circ p_g$ should be $(f \circ p_g)^*T^\delta S$, but we are considering these maps defined up to R.N.-lipschitz equivalence of vector bundle (see Remark \ref{equalizer}).
		\\Observe that $p_h$ is the homotopy between $p_f \circ P_g = p_{f \circ p_g}$ and $p_{f\circ \overline{g}}$ defined in Lemma \ref{homo2} and recall that $p_h$ is a R.-N.-Lipschitz map. Let us define the operator $K_2 := \int_{0\mathcal{L}}^1 \circ p_h^*$.
		It is an $\mathcal{L}^*$-bounded operator because it is a composition of $\mathcal{L}^*$-bounded operators. Then for every smooth form $\alpha$ we have
		\begin{equation}
		\begin{split}
		(p_{f \circ p_g})^* \alpha - p_{f\circ \overline{g}}^*\alpha &= (i_0^* - i_1^*)p_h^*\alpha \\
		&= (d \circ \int_{0\mathcal{L}}^1) p_h^* \alpha + ( \int_{0\mathcal{L}}^1 \circ d) p_h^*\alpha = d \circ K_2 \alpha + K_2 \circ d \alpha.
		\end{split}
		\end{equation}
		Finally, since $p_h$ is a proper map (it is a composition of proper maps), if $\alpha \in \Omega^*_c(S)$ the support of $K_2\alpha \in \Omega^*((f \circ \overline{g})^* T^\delta S)$ is compact.
	\end{proof}
	\begin{prop}\label{K3}
		Let $f_0$ and $f_1:(M,m) \longrightarrow (N,l)$ be two smooth, uniformly proper and Lipschitz maps between manifolds of bounded geometry. Let us suppose that $f_1 \sim_\Gamma f_0$ with a smooth Lipschitz homotopy. Then there is an $\mathcal{L}^*$-bounded operator $K_3: \mathcal{L}^*(N) \longrightarrow \mathcal{L}^*(p_{f_1}^*T^\delta N)$ such that for all smooth form $\alpha$ 
		\begin{equation}
		p_{f_1}^* \alpha - p_{f_0}^* \alpha = d \circ K_3 \alpha + K_3\circ d \alpha.
		\end{equation}
		Moreover if $\alpha \in \Omega^*_c(N)$ then the support of $K_3\alpha \in \Omega^*(p_{f_1}^*T^\delta N)$ is compact.	
	\end{prop}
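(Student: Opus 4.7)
The plan is to follow exactly the same pattern as in Propositions \ref{K1} and \ref{K2}: produce a smooth Lipschitz homotopy between $p_{f_0}$ and $p_{f_1}$ that is R.-N.-Lipschitz, then define $K_3$ as the composition of the pullback along this homotopy with the operator $\int_{0\mathcal{L}}^{1}$ of Lemma \ref{GLee}, and finally verify both the chain-homotopy formula and the compact-support property.

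First I would take the given smooth Lipschitz homotopy $H: (M\times[0,1], g+dt^2) \longrightarrow (N,l)$ between $f_0$ and $f_1$. Since $f_0$ is uniformly proper, Remark \ref{uniformlyp} yields that $H$ itself is uniformly proper, hence a smooth uniformly proper Lipschitz map between manifolds of bounded geometry (with the caveat that $M\times[0,1]$ has only bounded Ricci curvature, which is exactly the hypothesis covered by Remark \ref{salva}). It then follows from Lemma \ref{tilde} combined with Corollary \ref{R.-N.-p}/Remark \ref{salva} that the associated submersion $p_H : H^*(T^\delta N) \longrightarrow N$ is well-defined and R.-N.-Lipschitz; in particular $p_H^*$ is an $\mathcal{L}^*$-bounded operator.

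Next I would use Corollary \ref{equivhomo} together with Remark \ref{equalizer} to identify, up to an R.-N.-Lipschitz equivalence of vector bundles, the bundle $(H^*TN, g_{S,H})$ with $(f_1^*(T^\delta N) \times [0,1], g_{S,f_1} + dt^2)$. Under this identification $p_H$ becomes a smooth Lipschitz homotopy on $f_1^*(T^\delta N)\times[0,1]$ connecting $p_{f_0}\circ\phi$ and $p_{f_1}$, where $\phi$ is the R.-N.-Lipschitz equivalence from $f_1^*(T^\delta N)$ to $f_0^*(T^\delta N)$ furnished by Proposition \ref{equiv}. Following the convention recorded at the end of Remark \ref{equalizer}, I suppress $\phi$ from the notation and treat $p_H$ as a Lipschitz homotopy between $p_{f_0}$ and $p_{f_1}$ on the common domain $f_1^*(T^\delta N) \times [0,1]$. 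I then set
\begin{equation}
K_3 := \int_{0\mathcal{L}}^{1} \circ\, p_H^{*},
\end{equation}
which is $\mathcal{L}^*$-bounded as a composition of $\mathcal{L}^*$-bounded operators (by the previous step and by Lemma \ref{GLee}).

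Finally, applying identity (\ref{taglia}) of Lemma \ref{GLee} to $p_H^*\alpha$ for every smooth form $\alpha$ on $N$ gives the desired chain-homotopy formula
\begin{equation}
p_{f_1}^*\alpha - p_{f_0}^*\alpha = i_1^*p_H^*\alpha - i_0^*p_H^*\alpha = d\circ K_3\alpha + K_3\circ d\alpha.
\end{equation}
The compact-support statement follows because $p_H$ is a composition of uniformly proper maps, so $p_H^{-1}(\mathrm{supp}(\alpha))$ is bounded in the complete manifold $f_1^*(T^\delta N)\times[0,1]$ and $\int_{0\mathcal{L}}^{1}$ preserves compact support by Lemma \ref{GLee}. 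The main obstacle I anticipate is bookkeeping with the R.-N.-Lipschitz identifications of the three bundles $f_0^*TN$, $f_1^*TN$ and $H^*TN$: one must check that, after these identifications, $p_{f_0}$, $p_{f_1}$ and the homotopy $p_H$ genuinely live on the same domain and that replacing $p_{f_0}$ by $p_{f_0}\circ\phi$ is harmless for the statement; once this is accepted (Remark \ref{equalizer}), the argument proceeds verbatim as in Propositions \ref{K1} and \ref{K2}.
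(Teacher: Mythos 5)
Your proposal is correct and follows essentially the same route as the paper: form the submersion associated to the Lipschitz homotopy, check it is R.-N.-Lipschitz, identify its domain with $f_1^*T^\delta N\times[0,1]$ via Remark \ref{equalizer}, set $K_3 := \int_{0\mathcal{L}}^1 \circ p_H^*$, and conclude by Lemma \ref{GLee} and properness. The paper invokes Corollary \ref{R.-N.-p} directly on $p_h: f_1^*T^\delta N\times[0,1]\to N$ and refers to Remark \ref{equalizer} for the bundle identification, whereas you route through $H^*T^\delta N$, Corollary \ref{equivhomo}, and Remark \ref{salva} to handle the bounded-Ricci issue on $M\times[0,1]$ — a somewhat more careful bookkeeping of the same argument.
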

	\begin{proof}
		Observe that $h$, the homotopy such that $h(p, 0) = f_1(p)$ and $h(p,1) = f_0(p)$, is a uniformly proper Lipschitz map. Let us define the metric $g_S$ on $f_1^*T^\delta N$ as the Sasaki metric defined by using the Riemannian metric $m$, the bundle metric $f_1^*l$ and the connection $f^*\nabla^{LC, N}$.
		\\By applying Corollary \ref{R.-N.-p}, we obtain that $p_h: (f_1^*T^\delta N \times  [0,1], g_S + dt^2) \longrightarrow (N,h)$ is a R.-N.-Lipschitz map and so $p_h^*$ is an $\mathcal{L}^*$-bounded operator.
		\\Moreover $p_h$ is a Lipschitz-homotopy between $p_{f_1}$ and $p_{f_0}$. This fact follows directly by the definition of submersion related to a Lipschitz map in Lemma \ref{tilde} and by Remark \ref{equalizer}.
		\\So we can conclude as well as we did in the Proposition \ref{K1} and Proposition \ref{K2} by considering $K_3 := \int_{0\mathcal{L}}^1 \circ p_h^*$ and by using that $p_h$ is a proper map.
	\end{proof}
	\begin{prop}\label{drgfin}
		Consider $(M,g)$,$(N,h)$ and $(S,l)$ three manifolds of bounded geometry and consider $f, \tilde{f}:(M,g) \longrightarrow (N,h)$, $F:(M,g) \longrightarrow (N,h)$ and $g:(S,l)  \longrightarrow (M,g)$ three uniform maps, possibly non-smooth. Then, in $L^{q,p}$-cohomology, 
		\begin{enumerate}
			\item $T_{id_M} = Id_{H^*_{q,p}(M)}$,
			\item let us consider a Thom form $\omega$ on $\Omega^*(TN)$. Assume that the pointwise norm of $\omega$ is uniformly bounded and that the support of $\omega$ is strictly contained in $T^{inj_N}N$\footnote{For example as the Thom form defined in Subsection \ref{forma}.}. Let us suppose that $f \sim_\Gamma \tilde{f}$ with a smooth Lipschitz homotopy. Then, in $L^{q,p}$-cohomology,
			\begin{equation}
			T_{f} =  pr_{M\star} \circ e_{\tilde{F}^*\omega} \circ p_f^*,
			\end{equation}
			\item if $f$ is not differentiable and $f'$ is a smooth Lipschitz maps which is Lipschitz-homotopic to $f$, then $T_{f}$ does not depend on the choice of $f'$,
			\item if $f \sim_\Gamma F$ then $T_f = T_F$,
			\item $T_{f \circ g} = T_g \circ T_f$,
			\item if $f$ is an $L^*$-map\footnote{This means that $f$ is a smooth R.-N.-Lipschitz map which is also uniformly proper} then $f^* = T_f$,
		\end{enumerate}
		Moreover the identities above also holds in reduced $L^{q,p}$-cohomology and in $L^{q,p}$-quotient cohomology.
	\end{prop}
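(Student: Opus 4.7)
The plan is to prove the six items in the order given, using the chain-homotopy operators $K_1, K_2, K_3$ from Propositions \ref{K1}, \ref{K2}, \ref{K3} as the main tools, together with the projection formula for integration along the fibers and the fact that $e_\omega$ commutes (up to sign) with $d$ whenever $\omega$ is closed.

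For (1), when $f = id_M$ we have $f^*TN = TN$, the bundle map $F$ is the identity, and $pr_M$ coincides with the tangent projection $pr_N$. Hence $T_{id}\alpha = pr_{N\star}(p_{id}^*\alpha \wedge \omega)$. Proposition \ref{K1} gives $p_{id}^* = pr_N^* + dK_1 + K_1 d$, and substituting, the main term becomes $pr_{N\star}(pr_N^*\alpha \wedge \omega) = \alpha \wedge pr_{N\star}\omega = \alpha$ by the projection formula and the Thom property $\int_F \omega = 1$. The remainder is $d$-exact on cocycles, hence null in $L^{q,p}$-cohomology. For (2), both $F^*\omega$ and $\phi^*(\tilde F^*\omega)$ — where $\phi: f^*TN \to \tilde f^*TN$ is the R.-N.-Lipschitz bundle equivalence of Proposition \ref{equiv} — are Thom forms for $f^*TN$ with uniformly bounded pointwise norm and vertically compact support; their difference has zero fiber integral, so a standard fibrewise Poincaré lemma argument exhibits it as $d\eta$ with $\eta$ again bounded and vertically compactly supported. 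Then $e_{F^*\omega} - e_{\phi^*\tilde F^*\omega}$ is chain homotopic to $0$ via $e_\eta$, and pre- and post-composing with $p_f^*$ and $pr_{M\star}$ preserves the chain homotopy. Item (3) then follows from (4): two smooth Lipschitz approximations of $f$ furnished by Proposition \ref{appr} are mutually $\Gamma$-uniformly homotopic, hence induce the same class.

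For (4), up to $C^\infty$-Lipschitz approximation of the homotopy via Corollary \ref{C^k_bhomo} we may assume the uniform homotopy is smooth and Lipschitz. Proposition \ref{K3} then yields $p_f^* - p_F^* = dK_3 + K_3 d$, where by Remark \ref{equalizer} the pullback bundles have been identified. Precomposing with $pr_{M\star} \circ e_{F^*\omega}$ — using (2) to ensure the freedom of choosing a common bundle map — and using that $F^*\omega$ is closed (so $e_{F^*\omega}$ commutes with $d$ up to sign), we get that $T_f - T_F$ is chain homotopic to zero via $pr_{M\star} \circ e_{F^*\omega} \circ K_3$.

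The hardest step is (5). Writing out $T_g \circ T_f$ we obtain
\begin{equation}
pr_{S\star} \circ e_{G^*\omega_M} \circ p_g^* \circ pr_{M\star} \circ e_{F^*\omega_N} \circ p_f^*,
\end{equation}
and the goal is to rearrange this into $T_{f \circ g}$ modulo a chain boundary. The plan is first to lift everything to the total bundle $(f \circ \overline{g})^*T^\delta N$ over $g^*T^\sigma M$ and use the projection formula to commute $p_g^*$ through $pr_{M\star}$, producing a double fiber integration and the pullback $P_g^*$ introduced in Proposition \ref{homo2}. The resulting composition contains $p_f \circ P_g = p_{f \circ p_g}$ by point 4 of Lemma \ref{tilde}, and Proposition \ref{K2} replaces $p_{f \circ p_g}^*$ by $p_{f \circ \overline g}^*$ up to $dK_2 + K_2 d$. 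Since $p_{f \circ \overline g}$ factors as $p_{f \circ g} \circ \pi_g$ through the bundle projection, a final integration along the $g^*T^\sigma M$-fibers against $G^*\omega_M$ collapses via the Thom property into the single fiber integration defining $T_{f \circ g}$. The main technical burden is the careful bookkeeping of the chain-exact tails appearing at each step, and the verification that every intermediate map is R.-N.-Lipschitz so that the chain homotopies are genuine operators in the $\mathcal{L}^p$-setting.

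Finally, for (6), when $f$ is an $L^*$-map the naive pullback $f^*$ is $\mathcal{L}^*$-bounded and commutes with $d$; moreover the map $(v_{f(p)},t)\mapsto p_f(t\cdot v_{f(p)})$ is R.-N.-Lipschitz by Lemma \ref{homo1}, so it is a chain homotopy witnessing $p_f^* - (f \circ pr_M)^* = dK + Kd$ by the same construction as Proposition \ref{K1}. Substituting and using $(f \circ pr_M)^* = pr_M^* \circ f^*$ followed by the projection formula and the Thom property yields $T_f \alpha = pr_{M\star}(pr_M^* f^*\alpha \wedge F^*\omega) = f^*\alpha$ modulo a coboundary. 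All the above arguments pass verbatim to the reduced and quotient settings via Corollary \ref{boundedness2} and Remark \ref{altra}, since every operator used is $\mathcal{L}^*$-bounded and commutes with $d_{min}$ on its natural domain.
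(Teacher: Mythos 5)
Most of your outline tracks the paper's own argument: point 1 via $K_1$ and the projection formula, point 5 via the double-pullback diagram, the Conn projection formula, $K_2$, and the Thom property of $G^*\omega'$ to reduce the double fiber integration, and point 6 via an explicit chain homotopy. (Your point 6 route — directly homotoping $p_f$ to $\overline{f}=f\circ pr_M$ via $(v_{f(p)},t)\mapsto p_f(tv_{f(p)})$ and Lemma \ref{homo1} — is a small, legitimate variation on the paper, which instead factors $p_f=p_{id}\circ F$ and uses the Conn formula to get $T_f=f^*\circ T_{id_N}$; both work. Deriving (3) from (4) rather than vice versa is also fine, since your (4) is proved directly for a pair of smooth Lipschitz maps.)

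The genuine gap is in your treatment of point 2. You write that $F^*\omega - \phi^*\tilde F^*\omega$ is a closed, vertically compactly supported form with zero fiber integral, and then invoke ``a standard fibrewise Poincar\'e lemma argument'' to exhibit it as $d\eta$ with $\eta$ \emph{again bounded} and vertically compactly supported. It is exactly this boundedness claim that is not standard and that the rest of the proposition hinges on: the generic fibrewise Poincar\'e lemma (say Bott--Tu) gives you a primitive but says nothing about a uniform pointwise bound, and without it $e_\eta$ is not an $\mathcal{L}^*$-bounded operator and the chain homotopy does not descend to $L^{q,p}$-cohomology. The paper sidesteps this entirely by \emph{constructing} the primitive explicitly: it takes the homotopy $h$ between $f$ and $\tilde f$, forms the induced bundle map $H:f^*(T^\delta N)\times[0,1]\to T^\delta N$, and sets $\eta := \int_{0\mathcal{L}}^1 H^*\omega$. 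Since $H$ is Lipschitz and $|\omega|$ is uniformly bounded, one gets $|H^*\omega|\le C_H^n|\omega|$ pointwise, so $\eta$ inherits a uniform pointwise bound and vertically compact support for free, and $K=\pm\int_{0\mathcal{L}}^1\circ e_{H^*\omega}$ is manifestly $\mathcal{L}^*$-bounded. Your proposal needs to replace the appeal to the abstract Poincar\'e lemma by this explicit construction (or else actually prove the uniform bound on the Bott--Tu primitive in the bounded-geometry setting, which amounts to the same computation).
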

	\begin{proof}
			\textbf{Point 1.} Let us consider the standard projection $pr_M: TM \longrightarrow M$ and let $\delta \leq inj_M$. Because of Lemma \ref{K1}, for all smooth forms $\alpha$ in $\Omega^*_c(M)$,
		\begin{equation}
			pr_M^* - p_{id}^*(\alpha) = d \circ K_1 + K_1 \circ d (\alpha).
		\end{equation}
		Then the identity map in $\mathcal{L}^p(M)$ can be written as
		\begin{equation}
			1(\alpha) := pr_{M \star} \circ e_{\omega} \circ pr^*_M(\alpha)
		\end{equation}
		where $pr_{M \star}$ is the operator of integration along the fibers of $pr_M$ and $\omega$ is a Thom form with uniformly bounded pointwise norm and support contained in $T^\delta M$ (see subsection \ref{forma}).
		\\Then for every $\alpha \in \Omega_c^*(M)$
		\begin{equation}
			\begin{split}
				1 - T_{id_M}(\alpha) &= pr_{M \star} \circ e_{\omega} \circ (pr^*_M - p_{id}^*)\alpha\\
				&= pr_{M \star} \circ e_{\omega} \circ (d \circ K_1 + K_1 \circ d) \alpha.
			\end{split}
		\end{equation}
		Observe that since $\omega$ is closed,  $d(\alpha \wedge \omega) = (d\alpha) \wedge \omega$. Moreover $\alpha \wedge \omega$ is in $\Omega^*_{vc}(TM)$. This means that the exterior derivative can be switched with $pr_{M \star}$ and so
		\begin{equation}
			\begin{split}
				1 - T_{id_M}(\alpha) &= d \circ  pr_{M \star} \circ e_{\omega} \circ K_1 + pr_{M \star} \circ e_{\omega} \circ K_1 \circ d (\alpha)\\
				&=d \circ Y_1 + Y_1 \circ d(\alpha) \label{Y_1},
			\end{split}
		\end{equation}
		where $Y_1 := pr_{M \star} \circ e_{\omega} \circ K_1$. Observe that $Y_1$ is an $\mathcal{L}^*$-bounded operator because it is a composition of $\mathcal{L}^*$-bounded operators. By applying Proposition \ref{boundedness}, we obtain that $Y_1(dom(d)) \subseteq dom(d)$ and so (\ref{Y_1}) holds for every $\alpha$ in $dom(d_{min})$. Then in $L^{q,p}$-cohomology we obtain
		\begin{equation}
			Id_{H^*_{q,p}(M)} = T_{id_M}.
		\end{equation}
		\\\textbf{Point 2.} Consider $\omega$ a Thom form of $TN$ which has support contained in $T^\delta N$ and uniformly bounded pointwise norm (for example the Thom form of Subsection \ref{forma}) and let $h: M \times [0,1] \longrightarrow N$ be a smooth Lipschitz homotopy between $f$ and $\tilde{f}$. Consider $g_S$ the Sasaki metric on $T^\delta N$ induced by the metric on $N$ and by the Levi-Civita connection on $TN$. Moreover let us denote by $g_{S,f}$ the Sasaki metric on $f^*TN$ induced by the metrics on $M$ and on $N$ and the pullback of the Levi-Civita connection on $TN$. Then the fibered map induced by $h$
		\begin{equation}
		\begin{split}
 H: (h^*T^\delta N, g_{Sh}) &\longrightarrow (T^\delta N, g_S) \\
 (p,t, w_{h(p,t)}) &\longrightarrow  w_{h(p,t)}
		\end{split}
		\end{equation}
		is Lipschitz. By Remark \ref{equalizer} we can see $H$ as a Lipschitz map defined on $(f^*(T^\delta N) \times [0,1], g_{S,f} + dt^2)$.  Observe that $F^*\omega = i_0^*H^*\omega$ and $\tilde{F}^*\omega = i_1^*H^*\omega$ where $i_j: M \longrightarrow M \times [0,1]$ is defined by setting $i_j(p) = (p,j)$ for each $j$ in $[0,1]$.
		\\Observe that
		\begin{equation}
		 F^*\omega - \tilde{F}^*\omega = d \int_{0\mathcal{L}}^1 H^*\omega - \int_{0\mathcal{L}}^1 H^* d \omega = d \int_{0\mathcal{L}}^1 H^*\omega.
		\end{equation}
		Then, if we consider $\alpha$ an $\mathcal{L}^p$ smooth form on $f^*T^\delta N$, we obtain
		\begin{equation}
		  \begin{split}
		  e_{F^*\omega} - e_{\tilde{F}^*\omega} (\alpha) &= \alpha \wedge d \int_{0\mathcal{L}}^1 H^*\omega \\
		  &= (-1)^{deg(\alpha)} d(\alpha \wedge \int_{0\mathcal{L}}^1 H^*\omega) - (-1)^{deg(\alpha)} d\alpha \wedge \int_{0\mathcal{L}}^1 H^*\omega\\
		  &= (-1)^{deg(\alpha)} d(\alpha \wedge \int_{0\mathcal{L}}^1 H^*\omega) - (-1)^{deg(\alpha)} \int_{0\mathcal{L}}^1  d\alpha \wedge H^*\omega \\
		  &= d \circ K - K \circ d (\alpha)
		  \end{split}
		  \end{equation}
		  where $K (\alpha) = (-1)^{deg(\alpha)} \circ \int_{0\mathcal{L}}^1 \circ e_{H^*\omega}(\alpha).$
		Observe that the pointwise norm of $\omega$ is uniformly bounded in $TN$: it follows from subsection \ref{forma}. Then, since $H$ is a Lipschitz map, also the norm of $H^*\omega$ is uniformly bounded in $f^*T^\delta N \times [0,1]$. This means that $e_{H^*\omega}$ is an $\mathcal{L}^*$-bounded operator.
		\\This implies that for each smooth form $\alpha$ in $\mathcal{L}^p(N)$ for all $p \in [1, +\infty)$,
		\begin{equation}
		\begin{split}
		&T_f - pr_{M\star} \circ e_{\tilde{F}^*\omega} \circ p_f^*(\alpha) = pr_{M\star} \circ ( e_{F^*\omega} - e_{\tilde{F}^*\omega}) \circ p_f^*\alpha \\
		&= pr_{M \star} \circ (d \circ K - K \circ d)  \circ p_f^*\alpha = d \circ J - J d (\alpha),
		\end{split}
		\end{equation}
		where $ J :=  pr_{M \star} \circ K \circ p_f^*$.
		Observe that $J$ is $\mathcal{L}^*$-bounded because it is a composition of $\mathcal{L}^*$-bounded operators. Then, since $J(\Omega_c^*(N)) \subseteq \Omega_c^*(M)$, we conclude the proof of Point 2 by applying Proposition \ref{boundedness}.
		\\
		\textbf{Point 3.} Consider two smooth Lipschitz approximations $f_1$ and $f_2$ of $f$. This in particular means that $f_1 \sim_\Gamma f_2$. 
Let us consider, now, a smooth Lipschitz homotopy $H$ between $f_1$ and $f_2$ as in Lemma \ref{C^k_bhomo}. We know, thanks to Proposition \ref{K3}, that there is an $\mathcal{L}^*$-bounded operator $K_3$, such that for every $\alpha$ in $\Omega_c^*(N)$
\begin{equation}
	p_{f_1}^* - p_{f_2}^* (\alpha)= d \circ K_3 + K_3 \circ d (\alpha).
\end{equation}
Moreover, by the previous point, if we consider the Thom form $F_1^* \omega$ of $f_1^*TN$ defined as in subsection \ref{forma}, we obtain that
\begin{equation}
T_{f_2} = pr_{M\star} \circ e_{F_1^* \omega} \circ p_{f_2}^*.
\end{equation}
Moreover, by the previous point, we can also suppose that
\begin{equation}
T_{f_1} = pr_{M\star} \circ e_{F_1^* \omega} \circ p_{f_1}^*.
\end{equation}
This means that if we consider the $\mathcal{L}^*$-bounded operator\footnote{It is $\mathcal{L}^*$-bounded because it is a composition of $\mathcal{L}^*$-bounded operators} $Y_3 := pr_{M\star} \circ e_{F_1^* \omega} \circ K_3$, then for all $\alpha$ in $\Omega_c^*(N)$ we obtain
\begin{equation}
	\begin{split}
		&T_{f_1} - T_{f_2} (\alpha) = pr_{M\star} \circ e_{F_1^* \omega} \circ (p_{f_1}^* - p_{f_2}^*) \\
		&= pr_{M\star} \circ e_{F_1^* \omega} \circ (d \circ K_3 + K_3 \circ d) (\alpha) = d \circ Y_3 + Y_3 \circ d (\alpha).
	\end{split}
\end{equation}
Again, by applying Proposition \ref{boundedness}, we obtain that the identity above holds for all $\beta$ in $dom(d_{min})$.
\\Then, in $L^{q,p}$-cohomology, $T_f$ does not depend on the choice of the smooth Lipschitz approximation of $f$.
\\\textbf{Point 4.} Let us consider $f'$ and $F'$ two differentiable maps which are Lipschitz-homotopic to $f$ and $F$. Let $H$ be a smooth Lipschitz homotopy between $f'$ and $F'$. Then we can conclude as the previous point.		
\\
	\textbf{Point 5.} Thanks to the previous points, we can consider $f$ and $g$ as smooth maps. Let us define the vector bundles $pr_S: g^*T^\sigma M \longrightarrow S$, $pr_{S2}: (f \circ g)^*T^\delta N \longrightarrow S$. Moreover denote by $\overline{g}: g^*T^\sigma M \longrightarrow M$ the map defined as $\overline{g} := g \circ pr_S$. We know that
	\begin{equation}
	(f \circ \overline{g})^*TN = pr_{S2}^* g^* (TM) = pr_S^* (f \circ g)^*TN =  (f \circ g)^*TN \oplus g^*TM
	\end{equation}
	as vector bundles over $S$. Then, we obtain the following diagram
	\begin{equation}
	    \xymatrix{
& (f \circ \overline{g})^*T^\delta N \ar[ld]^{PR_S} \ar[d]^{PR_{S2}} \ar[rd]^{P_g} & &\\
(f \circ g)^*T^\delta N \ar[rd]^{pr_{S2}} &g^*T^\sigma M \ar[d]^{pr_S} \ar[rd]^{p_g} & f^*T^\delta N \ar[d]^{pr_M} \ar[rd]^{p_f} &\\
&S \ar[r]^{g} & M \ar[r]^{f} & N}
	\end{equation}
	where $PR_S$, $PR_{S2}$ and $P_g$ are the bundle maps induced by $pr_S$, $pr_{S2}$ and by $p_g$. In particular
	\begin{equation}\label{laggiu}
	 pr_{S2} \circ PR_S = pr_S \circ PR_{S2}.
	\end{equation}
	Observe that on $(f \circ \overline{g})^*TN = pr_{S2}^* g^* (TM)$ there is the Sasaki metric $g_1$ induced by the Sasaki metric on $(f \circ g)^*TN$, the pullback bundle metric of $g^*(TM)$ induced by the metric on $M$ and the pullback connection of $\nabla^{LC}_M$.
	\\Moreover if we consider $(f \circ \overline{g})^*TN = pr_S^* (f \circ g)^*TN$, then we can define the Sasaki metric $g_2$ induced by the Sasaki metric on $g^* (TM)$, the pullback bundle metric of $(f \circ g)^*TN$ induced by the metric on $N$ and the pullback connection of $\nabla^{LC}_N$.
	\\In order to prove this equality between metrics, let us fix some normal coordinates $\{x^i\}$ around a point $s$ in $S$, $\{y^j\}$ around $g(s)$ and $\{z^k\}$ around $f \circ g (s)$. Then we consider on $(f \circ \overline{g})^*TN$ the fibered coordinates $\{x^i, \mu^j, \sigma^k\}$ related to the coordinates $\{x^i\}$, the frame $\{\frac{\partial}{\partial y^j}\}$ and the frame $\{\frac{\partial}{\partial z^k}\}$. Then by the formula \ref{metri}, we obtain that the matrices related to $g_1$ and $g_2$ on a point $(0, \mu, \sigma)$ are both the identity. So $g_1 = g_2$.
	\\This means that, with respect to the metric $g_1$, the maps $PR_S$, $PR_{S_2}$ are R.N.-Lipschitz. Moreover, since $P_g \sim \overline g$, we also have that $P_g$ is R.N.-Lipschitz. Finally, because of Proposition \ref{star}, we also have that $PR_{S\star}$ and $PR_{S2\star}$ are both $\mathcal{L}^*$-bounded oprators.
	\\Consider the submersions $p_f: f^*T^\delta N \longrightarrow N$, $p_g: g^*T^\sigma M \longrightarrow M$, $p_{f \circ g}: (f \circ g)^*T^\delta N \longrightarrow N$ related to $f$, $g$, and $f \circ g$. Then, as a consequence of the previous Proposition, we obtain for every $\alpha$ in $\Omega^*_c(N)$ 
		\begin{equation}
		(P_g)^* \circ p_f^* - \overline{p}^*_{f \circ g}(\alpha)= d \circ K_2 + K_2 \circ d (\alpha). 
		\end{equation}
		Let us denote by $\omega$ and $\omega'$ some Thom forms of $TN$ and of $TM$ with uniformly bounded pointwise norms and support contained in $T^\delta N$ and $T^\sigma M$.
		\\Let us denote by $FG: (f\circ g)^*T^\delta N \longrightarrow T^\delta N$ the bundle map induced by $f \circ g$. Moreover we also denote by $F:f^*T^\delta N \longrightarrow T^\delta N$ and $G:g^*T^\sigma M \longrightarrow T^\sigma M$ the bundle maps induced by $f$ and $g$.
		 \\Then 
		\begin{equation}
		\begin{split}
		T_{f \circ g} &= pr_{S2 \star} \circ e_{(FG)^*\omega} \circ p_{f\circ g}^* \\
		&=pr_{S2 \star} \circ id_{(f \circ g)^* T^\delta N} \circ e_{(FG)^*\omega} \circ p_{f\circ g}^* \\
		&=pr_{S2 \star} \circ PR_{S, \star} \circ e_{PR_{S2}^* G^* \omega'} \circ PR_S^*  \circ e_{(FG)^*\omega} \circ p_{f\circ g}^*\\
		&=pr_{S2 \star} \circ PR_{S, \star} \circ e_{PR_{S2}^* G^* \omega'} \circ e_{PR_S^* (FG)^*\omega}  \circ PR_S^* \circ p_{f\circ g}^*,
		\end{split}
		\end{equation}
		Observe that $p_{f \circ \overline{g}} = p_{f\circ g} \circ PR_S$, and so 
		\begin{equation}
		T_{f \circ g} = pr_{S2 \star} \circ PR_{S, \star} \circ e_{PR_{S2}^* G^* \omega'} \circ e_{PR_S^* (FG)^*\omega}  \circ p_{f \circ \overline{g}}^*. 
		\end{equation}
		Now we will focus on $T_g \circ T_f$. Observe that
		\begin{equation}
		T_g \circ T_f = pr_{S \star} \circ e_{G^* \omega'} \circ p_g^* \circ pr_{M \star} \circ e_{F^*\omega} \circ p_f^*.
		\end{equation}
		It is possible to apply the Proposition VIII of Chapter 5 in \cite{Conn} to the fiber bundles $((f \circ \overline{g})^*T^\delta N , PR_{S2}, g^*T^\sigma M, B^\delta)$ and $(T^\sigma M, pr_{M}, M, B^\sigma)$ and the bundle morphism $P_g$ induced by $p_g$. We obtain that $p_g^* \circ pr_{M \star} = PR_{S2,\star} \circ P_g^*$.
	Moreover $P_g^* \circ e_{F^*\omega} = e_{P_g^* F^*\omega} \circ P_g^*$.
		This means
		\begin{equation}
		\begin{split}
		T_g \circ T_f &= pr_{S \star} \circ e_{G^*\omega'} \circ p_g^* \circ pr_{M \star} \circ e_{F^*\omega} \circ p_f^* \\
		&= pr_{S \star} \circ e_{G^*\omega'} \circ PR_{S2,\star} \circ e_{P_g^* F^*\omega} \circ P_g^*\circ p_f^*\\
		&= pr_{S \star} \circ PR_{S2,\star} \circ e_{PR_{S2}^* G^*\omega'} \circ e_{P_g^* F^*\omega} \circ P_g^*\circ p_f^*
		\end{split}
		\end{equation}
		Because of (\ref{laggiu}) we obtain $pr_{S \star} \circ PR_{S2,\star} = pr_{S2 \star} \circ PR_{S,\star}.$
		\\Moreover  $PR_S^* (FG)^*\omega$ is the pullback of $\omega$ along the bundle map induced by $f \circ g \circ pr_{S2}$, but it can be also seen as the pullback of $\omega$ along the bundle map induced by $f \circ g \circ pr_{S} = f \circ \overline{g}$.
		\\On the other hand $P_g^* F^*\omega$ is the pullback of $\omega$ along the bundle map induced by  $f \circ p_g$. Then, by Point 2 and since $f \circ \overline{g}$ and $f \circ p_g$ are uniformly homotopy equivalent, there is an $\mathcal{L}^*$-bounded operator $K$ such that
		\begin{equation}
		 e_{PR_S^* (FG)^*\omega} - e_{P_g^* F^*\omega}(\alpha) = d \circ K - K \circ d (\alpha)
		\end{equation}
		and $K\beta$ has compact support if $\beta$ has compact support.
		\\This means that, on $\Omega_c^*(N)$, because of Proposition \ref{K2}, we have
		\begin{equation}\label{piom}
		\begin{split}
		T_{f \circ g} - T_g \circ T_f &= pr_{S2 \star} \circ PR_{S, \star} \circ e_{PR_{S2}^* G^* \omega'} \circ [dK - K d]  \circ p_{f \circ \overline{g}}^* \\
		&+pr_{S2 \star} \circ PR_{S, \star} \circ e_{PR_{S2}^* \circ G^* \omega'} \circ e_{PR_S^* (FG)^*\omega} \circ [K_2 \circ d + d \circ K_2] \\
		&= d \circ Q - Q \circ d +d \circ W  + W \circ d
		\end{split}
		\end{equation}
		where $Q := pr_{S2 \star} \circ PR_{S, \star} \circ e_{PR_{S2}^* G^* \omega'} \circ K \circ p_{f \circ \overline{g}}^*$ and $W := pr_{S2 \star} \circ PR_{S, \star} \circ e_{PR_{S2}^* G^* \omega'} \circ e_{PR_S^* (FG)^*\omega} \circ K_2.$
		Observe that $Q$ and $W$ are $\mathcal{L}^*$-bounded operators. Moreover $W(\Omega_c^*(N)) \subseteq \Omega^*_c(S)$ and $Q(\Omega_c^*(N)) \subseteq \Omega^*_c(S)$. Because of Proposition \ref{boundedness}, $W(dom(d_{p,q})) \subseteq dom(d_{p,q})$ and $Q(dom(d_{p,q})) \subseteq dom(d_{p,q})$ and so, in $L^{q,p}$-cohomology,
		\begin{equation}
		T_{f \circ g} = T_g \circ T_f.
		\end{equation}
		\\\textbf{Point 6.} In order to prove this statement we have to observe that, by Lemma \ref{tilde}, $p_f = p_{id} \circ F$. Let us consider a form $\alpha \in \Omega_c^*(N)$:
		\begin{equation}
		T_f \alpha = pr_{M\star} \circ e_{\omega} \circ F^* \circ p_{id}^* \alpha = pr_{M\star} \circ F^* \circ e_{\omega} \circ p_{id}^* \alpha.
		\end{equation}
		Now, thanks to the Proposition VIII of Chapter 5 of \cite{Conn},
		\begin{equation}
\begin{split}
		T_f \alpha &= pr_{M\star} \circ F^* \circ e_{\omega} \circ p_{id}^* \alpha \\
		&= f^* \circ pr_{N\star} \circ e_{\omega} \circ p_{id}^* \alpha = f^* \circ T_{id_N} \alpha \label{impl}
		\end{split}
		\end{equation}
		Since $f$ is R.-N.-Lipschitz, $f^*$ is $\mathcal{L}^*$-bounded. Then (\ref{impl}) implies $T_f = f^* \circ T_{id_N}$.
		So on $dom(d_{min})$ the following holds
		\begin{equation}
		f^* - T_f = f^* \circ (1 - T_{id_N}) = f^* \circ (d \circ Y_1 + Y_1 \circ d).
		\end{equation}
		Observe that $f^*(\Omega_c^*(N)) \subseteq \Omega_c^*(M)$ since $f$ is proper. Moreover $f^*d\alpha= d f^* \alpha$ for all smooth form $\alpha$. Then on $dom(d_{min})$
		\begin{equation}
		f^* - T_f = f^* \circ (d \circ Y_1 + Y_1 \circ d) = d \circ W + W \circ d		
		\end{equation}
		where $W = f^* \circ Y_1.$ And so in $L^{q,p}$-cohomology
		\begin{equation}
		f^* = T_f.
		\end{equation}
		In order to conclude the proof we have to show that all the identities above also hold in reduced $L^{q,p}$-cohomology and in $L^{q,p}$-quotient cohomology. To this end observe that if $Q = d Z \pm Zd$ as operator on $dom(d)$, and $Z$ is an $\mathcal{L}^*$-bounded operator then, in reduced $L^{q,p}$ cohomology, $Q$ is the null operator. This is a consequence of Corollary \ref{boundedness2}.
	\end{proof}
	\begin{cor}\label{finale}
		Let $(M,g)$ and $(N,h)$ be two oriented manifolds of bounded geometry. Let $f:(M,g) \longrightarrow (N,h)$ be a uniform homotopy equivalence. Then the (reduced or not) $L^{q,p}$-cohomologies and the $L^{q,p}$-quotient cohomology groups are isomorphic.
	\end{cor}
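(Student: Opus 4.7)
The plan is to derive the corollary directly from the functorial package proved in Proposition \ref{drgfin}, which already does the heavy lifting. Given a uniform homotopy equivalence $f:(M,g) \longrightarrow (N,h)$ with uniform homotopy inverse $\widetilde{f}:(N,h) \longrightarrow (M,g)$, I would first invoke Corollary \ref{puppa} to conclude that both $f$ and $\widetilde{f}$ are genuine uniform maps, so that the operators $T_f$ and $T_{\widetilde{f}}$ are defined and induce the morphisms
\begin{equation}
T_f : H^z_{q,p}(N) \longrightarrow H^z_{q,p}(M), \qquad T_{\widetilde{f}} : H^z_{q,p}(M) \longrightarrow H^z_{q,p}(N)
\end{equation}
in $L^{q,p}$-cohomology for every $z \in \mathbb{N}$, by Proposition \ref{drgfin}.

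Next I would use the hypothesis that $f \circ \widetilde{f} \sim id_N$ and $\widetilde{f} \circ f \sim id_M$ as uniform homotopies. Point 5 of Proposition \ref{drgfin} gives the contravariant composition formula $T_{f \circ \widetilde{f}} = T_{\widetilde{f}} \circ T_f$ and $T_{\widetilde{f} \circ f} = T_f \circ T_{\widetilde{f}}$. Combining this with Point 4 (uniform homotopy invariance) and Point 1 (the identity is sent to the identity) yields
\begin{equation}
T_{\widetilde{f}} \circ T_f = T_{f \circ \widetilde{f}} = T_{id_N} = Id_{H^z_{q,p}(N)},
\end{equation}
\begin{equation}
T_f \circ T_{\widetilde{f}} = T_{\widetilde{f} \circ f} = T_{id_M} = Id_{H^z_{q,p}(M)}.
\end{equation}
Thus $T_f$ is a two-sided inverse of $T_{\widetilde{f}}$, proving that the $L^{q,p}$-cohomology groups of $M$ and $N$ are isomorphic in each degree.

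Finally, I would remark that the last sentence of Proposition \ref{drgfin} asserts that the very same identities hold in reduced $L^{q,p}$-cohomology and in $L^{q,p}$-quotient cohomology. Hence the exact same four-line computation, interpreted in $\overline{H}^z_{q,p}$ and $T^z_{q,p}$ respectively, yields the corresponding isomorphisms in those groups as well. There is really no obstacle here: the whole content has been absorbed into Proposition \ref{drgfin}, and the corollary is a pure functorial consequence — any group isomorphism produced by a contravariant functor from an isomorphism in the source category. The only subtlety worth double-checking is that Corollary \ref{puppa} genuinely applies to the homotopy inverse as well (so that $T_{\widetilde{f}}$ is well defined), but this is symmetric: the role of $f$ and $\widetilde{f}$ can be swapped in the hypotheses of a uniform homotopy equivalence.
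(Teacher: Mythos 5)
Your proof is correct and follows essentially the same route as the paper: invoke Corollary \ref{puppa} to see that $f$ and its homotopy inverse are uniform maps, then apply Points 1, 4, and 5 of Proposition \ref{drgfin} to get $T_{\widetilde{f}} \circ T_f = T_{f\circ\widetilde{f}} = T_{id_N} = Id$ and symmetrically, in unreduced, reduced, and quotient $L^{q,p}$-cohomology alike.
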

	\begin{proof}
	Since $f$ and its homotopy inverse $g$ are uniform homotopy equivalences, then they are uniform map. Moreover also $g \circ f$ and $f \circ g$ are uniformly homotopic to the identities. Then
		\begin{equation}
		1_{H^*_{q,p}(M)} = T_{id_M} = T_{g \circ f} = T_f \circ T_g \mbox{   and   } 1_{H^*_{q,p}(N)} = T_{id_N} = T_{f \circ g} = T_g \circ T_f.
		\end{equation}
		The reduced case and the quotient cohomology case can be proved exactly with the same argument. 
	\end{proof}
	\subsection{Consequence: uniform homotopy invariance of $L^2$-index of signature operator}
	A consequence of this work is the uniform homotopy invariance, for manifolds of bounded geometry, of the index of the $L^2$-version of the signature operator $d + d^*$ defined by Bei in page 20 of \cite{Bei3}. Fix $(M,g)$ an oriented manifold of bounded geometry and let $dim(M) =4k$ for some positive $k$ in $\mathbb{N}$. Let us consider
	\begin{equation}\label{pairing}
	\begin{split}
	  \langle \cdot , \cdot :\rangle_M  &\overline{H}^i_{2}(M, \mathbb{R}) \times \overline{H}^i_{2}(M, \mathbb{R}) \longrightarrow \mathbb{R} \\
	    &([\eta], [\omega]) \longrightarrow \int_{M} \eta \wedge \omega
	    \end{split}
	\end{equation}
	where $\overline{H}^i_{2}(M, \mathbb{R})$ denotes the $i$-th group of reduced $L^2$-cohomology defined by using forms with value in $\mathbb{R}$. This is a well-defined and non-degenerate pairing: the proof is exactly the same given by Bei in Proposition 4.1 of \cite{Bei3}. In particular, if $i = 2k$ we obtain a symmetric bilinear form. Then, if $\overline{H}^{2n}_{2}(M, \mathbb{R})$ is finite dimensional, we can denote by $\sigma_M$ the signature of the pairing.
	\\Let us consider the signature operator $d_M + d^*_M$ where $d_M := d_{min,M} = d_{max,M}$ and let $d^*_M$ be the adjoint of $d_M$. In page 20 of \cite{Bei3}, the \textit{index of the signature operator} $ind((d_M + d_M^*)^+)$  is defined and, in Theorem 4.2, the author proves
	\begin{equation}\label{sign}
	    \sigma_M = ind((d_M + d_M^*)^+).
	\end{equation}
	\begin{prop}
	Let $(M,g)$ and $(N,h)$ be two oriented manifolds of bounded geometry and let $dim(M) = dim(N) = 4k$ for some positive $k$ in $\mathbb{N}$. Let  $f: (M,g) \longrightarrow (N,h)$ be a uniform homotopy equivalence which preserves the orientations and assume that $\overline{H}^{2k}_{2}(M, \mathbb{R}) \cong \overline{H}^{2k}_{2}(N, \mathbb{R})$ are finite-dimensional. Then
	\begin{equation}
	    ind((d_M + d_M^*)^+) = ind((d_N + d_N^*)^+)
	\end{equation}
	\end{prop}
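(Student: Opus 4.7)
The plan is to reduce the equality of $L^2$-indices to Corollary \ref{finale} via Bei's signature theorem. By (\ref{sign}) it suffices to show $\sigma_M = \sigma_N$, i.e.\ that the signatures of the symmetric intersection pairings (\ref{pairing}) on the (finite-dimensional, by hypothesis) spaces $\overline{H}^{2k}_{2}(M, \mathbb{R})$ and $\overline{H}^{2k}_{2}(N, \mathbb{R})$ agree. Corollary \ref{finale} supplies a linear isomorphism $T_f: \overline{H}^{2k}_{2}(N, \mathbb{R}) \to \overline{H}^{2k}_{2}(M, \mathbb{R})$, and since the signature of a nondegenerate symmetric bilinear form is preserved by linear isometries of the underlying space, it is enough to show that $T_f$ is an isometry of the intersection pairings, i.e.
\begin{equation*}
\int_M T_f(\eta)\wedge T_f(\omega) \;=\; \int_N \eta\wedge\omega
\end{equation*}
for all closed $L^2$-forms $\eta,\omega$ of degree $2k$ on $N$.

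To establish this I would first, via Proposition \ref{appr} and point 4 of Proposition \ref{drgfin}, reduce to the case where $f$ is smooth, Lipschitz and uniformly proper. Using the explicit formula $T_f(\alpha) = pr_{M,\star}(p_f^*\alpha\wedge F^*\omega)$ together with Lemma \ref{homo1} (the Lipschitz homotopy $H(v,t) = p_f(t\cdot v)$ connecting $p_f$ with $f\circ pr_M$), one obtains $p_f^* = pr_M^*\circ f^*$ at the cohomological level. Combined with the projection formula for $pr_M$ and with the fact that $F^*\omega$ is a Thom class for the bundle $pr_M: f^*T^\delta N\to M$, this shows that on cohomology $T_f$ acts as the naive pullback $f^*$ would; in particular it is multiplicative on cohomology classes, so $T_f(\eta)\wedge T_f(\omega)$ and $T_f(\eta\wedge\omega)$ differ by an exact $L^1$-form.

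It then remains to compute $\int_M T_f(\eta\wedge\omega)$. Since $\eta\wedge\omega$ has top degree, Fubini applied to the R.-N.-Lipschitz submersions $pr_M$ (Proposition \ref{star}) and $p_f$ (Corollary \ref{R.-N.-p}) gives
\begin{equation*}
\int_M T_f(\eta\wedge\omega) \;=\; \int_{f^*T^\delta N} p_f^*(\eta\wedge\omega)\wedge F^*\omega \;=\; \int_N (\eta\wedge\omega)\cdot p_{f,\star}(F^*\omega),
\end{equation*}
reducing the whole argument to proving $p_{f,\star}(F^*\omega)\equiv 1$ on $N$. This will be the main obstacle: for each $q\in N$ the fiber $p_f^{-1}(q)$ is parametrised by the section $\psi_q: p\mapsto \exp_{f(p)}^{-1}(q)$ defined on $f^{-1}(B_\delta(q))$, and
\begin{equation*}
p_{f,\star}(F^*\omega)(q) \;=\; \int_M \psi_q^*\,\omega
\end{equation*}
is a "local degree" of the orientation-preserving uniform homotopy equivalence $f$ at $q$. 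One checks that this integral is independent of $q$ by a $K_1$-type homotopy argument in the spirit of Proposition \ref{K1}, using that $\omega$ is closed; its value can then be computed by comparison with the case $f = id_N$, where the Thom normalisation forces it to equal $1$. Substituting back yields $\int_M T_f(\eta)\wedge T_f(\omega) = \int_N\eta\wedge\omega$, hence $\sigma_M = \sigma_N$, and (\ref{sign}) concludes.
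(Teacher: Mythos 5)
Your overall strategy is the same as the paper's: reduce the equality of indices to Bei's formula $\sigma_M = ind((d_M+d_M^*)^+)$, and then show that $T_f$ is an isomorphism of reduced $L^2$-cohomology that preserves the intersection pairing (\ref{pairing}). Where you diverge from the paper is in how you establish $\int_M T_f\eta \wedge T_f\omega = \int_N \eta\wedge\omega$, and it is there that your argument has a genuine gap.

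The paper uses the \emph{operator-level} identity $T_f = f^*\circ T_{id_N}$ established in point 6 of Proposition~\ref{drgfin} (equation~(\ref{impl})), which is a straight algebraic consequence of $p_f = p_{id}\circ F$ and of Proposition VIII of \cite{Conn}. This converts the pairing into
\begin{equation*}
\int_M T_f\eta\wedge T_f\omega = \int_M f^*(T_{id}\eta)\wedge f^*(T_{id}\omega) = \int_M f^*(T_{id}\eta\wedge T_{id}\omega) = \deg(f)\int_N T_{id}\eta\wedge T_{id}\omega,
\end{equation*}
and the last integral is handled by $T_{id}\eta = \eta + d\alpha$, $T_{id}\omega = \omega + d\beta$ with $\alpha,\beta$ in $\mathcal L^2$, together with Stokes. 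Multiplicativity is never asserted for $T_f$; it is the \emph{pointwise} multiplicativity of $f^*$ that does the work, and the factorization through $T_{id}$ guarantees that $f^*(T_{id}\eta) = T_f\eta$ is $\mathcal L^2$ even though $f^*$ alone is not an $\mathcal L^2$-bounded operator.

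You instead claim that ``$T_f$ acts as $f^*$ on cohomology; in particular it is multiplicative on cohomology classes, so $T_f(\eta)\wedge T_f(\omega)$ and $T_f(\eta\wedge\omega)$ differ by an exact $L^1$-form.'' This step is not justified, and it is precisely the crux. The homotopy $p_f \simeq f\circ pr_M$ gives $p_f^* = pr_M^*\circ f^*$ only in ordinary (unbounded) de Rham cohomology; $f^*$ is not $\mathcal L^2$-bounded, so ``$T_f = f^*$'' is not a meaningful identity of operators on $L^2$-cohomology, and you cannot deduce multiplicativity of $T_f$ on $L^2$-classes from multiplicativity of a map that does not act on $L^2$-cohomology. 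To make your claim rigorous you would in effect have to reintroduce the factorization $T_f = f^*\circ T_{id_N}$ — which is the paper's key step and the one you were trying to avoid.

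The second half of your plan — Fubini along $pr_M$ and $p_f$ to reduce to $p_{f,\star}(F^*\omega)\equiv 1$ — is also not immediate. Be careful that the Thom normalization gives $pr_{M,\star}(F^*\omega) \equiv 1$, i.e.\ integration along the \emph{bundle} fibers of $pr_M$; it says nothing directly about integration along the fibers of the \emph{submersion} $p_f$, which is what $p_{f,\star}$ does. Showing $p_{id,\star}\omega\equiv 1$ already requires a homology argument (the fiber of $p_{id}$ over $q$ is the graph of $p\mapsto\exp_p^{-1}(q)$ over $B_\delta(q)$, which is only homologous, not equal, to the vertical fiber $T_q^\delta N$), and for general $f$ one then needs $F\colon p_f^{-1}(q)\to p_{id}^{-1}(q)$ to have degree $1$, which is the statement $\deg(f)=1$ for an orientation-preserving uniform homotopy equivalence. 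This is all provable, but it is not a restatement of the Thom normalization and the ``comparison with $f = id_N$'' needs to be spelled out.

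In short: the reduction to ``$T_f$ is an isometry of the pairing'' is correct and matches the paper, but the multiplicativity step is a gap that cannot be filled without essentially reproving $T_f = f^*\circ T_{id_N}$, and the $p_{f,\star}(F^*\omega)\equiv 1$ computation needs the homology-and-degree argument made explicit rather than attributed to the Thom normalization.
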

	\begin{proof}
	In order to prove the statement it is sufficient to prove that $T_f$ induces an isometry between $\overline{H}^{2n}_{2}(M, \mathbb{R})$ and $\overline{H}^{2n}_{2}(N, \mathbb{R})$ which respects the bilinear form defined in (\ref{pairing}).
	\\Let us consider $[\eta]$ and $[\omega]$ two classes in $\overline{H}^{2n}_{2}(N, \mathbb{R})$. Thanks to Theorem 12.7 of \cite{Gol}, we can suppose that $\eta$ and $\omega$ are smooth forms. Observe that, since the operator $Y_1$ in point 1 of Proposition \ref{drgfin} is $\mathcal{L}^*$-bounded,  $T_{id} \eta = \eta + d\alpha$ and $T_{id} \omega = \omega + d\beta$ where $\alpha$ and $\beta$ are smooth forms. We obtain that
	\begin{equation}
	    \begin{split}
	   \langle [T_f \eta] , [T_f \omega] \rangle_M &= \int_M T_f\eta \wedge T_f\omega \\
	   &= \int_M f^*(T_{id}\eta) \wedge f^* (T_{id}\omega) = \int_M f^* (T_{id}\eta \wedge T_{id}\omega) \\
	   &= deg(f) \cdot \int_N  T_{id}\eta \wedge T_{id}\omega = \int_N (\eta + d\alpha) \wedge (\omega + d\beta)\\
	   &= \int_N \eta \wedge \omega + \int_N d(\alpha \wedge \omega) + \int_N d(\eta \wedge \beta) + \int_N d(\alpha \wedge d\beta)\\
	   &=\int_N \eta \wedge \omega + 0 = \langle [\eta] , [\omega]\rangle_N.
	    \end{split}
	\end{equation}
	\end{proof}
\addcontentsline{toc}{section}{\bibname}

\end{document}